\newtheorem{thm}{Theorem}
\newtheorem{dfn}[thm]{Definition}
\newtheorem{lem}[thm]{Lemma}
\newtheorem{prop}[thm]{Proposition}
\newtheorem{remark}[thm]{Remark}
\newtheorem{ex}[thm]{Example}
\newtheorem*{thm*}{Theorem}
\newtheorem*{dfn*}{Definition}
\newtheorem*{lem*}{Lemma}
\newtheorem*{prop*}{Proposition}
\newtheorem*{remark*}{Remark}
\newtheorem*{cor*}{Corollary}
\newtheorem*{ex*}{Example}
\newtheorem*{question*}{Questions}
\newtheorem*{exercise*}{Exercise}
\def\Z{\mathbb{Z}}
\def\N{\mathbb{N}}
\def\L{\mathcal{L}}
\numberwithin{equation}{section}
\numberwithin{thm}{section}
\tikzset{join/.code=\tikzset{after node path={%
			\ifx\tikzchainprevious\pgfutil@empty\else(\tikzchainprevious)%
			edge[every join]#1(\tikzchaincurrent)\fi}}}
\tikzset{>=stealth',every on chain/.append style={join},
	every join/.style={->}}
\tikzstyle{labeled}=[execute at begin node=$\scriptstyle,
\title[Topological recursion for twisted Higgs bundles]{Topological recursion and variations of spectral curves for twisted Higgs bundles}
\date{\today}
\author{Christopher Mahadeo}
\address{Department of Mathematics, Statistics, and Computer Science, University of Illinois at Chicago, IL, USA~ 60607}
\email{cmahadeo@uic.edu}
\author{Steven Rayan}
\address{Centre for Quantum Topology and Its Applications (quanTA) and Department of Mathematics and Statistics, University of Saskatchewan, SK, Canada~ S7N 5E6}
\email{rayan@math.usask.ca}
\begin{document}

\maketitle

\begin{abstract}

Prior works relating meromorphic Higgs bundles to topological recursion, in particular those of Dumitrescu-Mulase, have considered non-singular models that allow the recursion to be carried out on a smooth Riemann surface.  We start from an $\mathcal{L}$-twisted Higgs bundle for some fixed holomorphic line bundle $\mathcal{L}$ on the surface.  We decorate the Higgs bundle with the choice of a section $s$ of $K^*\otimes\mathcal{L}$, where $K$ is the canonical line bundle, and then encode this data as a $b$-structure on the base Riemann surface which lifts to the associated Hitchin spectral curve.  We then propose a so-called twisted topological recursion on the spectral curve, after which the corresponding Eynard-Orantin differentials live in a twisted cotangent bundle.  This formulation retains, and interacts explicitly with, the singular structure of the original meromorphic setting --- equivalently, the zero divisor of $s$ --- while performing the recursion.  Finally, we show that the $g=0$ twisted Eynard-Orantin differentials compute the Taylor expansion of the period matrix of the spectral curve, mirroring a result of Baraglia-Huang for ordinary Higgs bundles and topological recursion.  Starting from the spectral curve as a polynomial form in an affine coordinate rather than a Higgs bundle, our result implies that, under certain conditions on $s$, the expansion is independent of the ambient space $\mbox{Tot}(\mathcal{L})$ in which the curve is interpreted to reside.
\end{abstract}

\tableofcontents

\let\thefootnote\relax\footnotetext{2020\textit{ Mathematics Subject Classification.} 14D20, 70H06, 14A21.}\let\thefootnote\relax\footnotetext{\textit{Keywords and phrases.} Topological recursion, Higgs bundle, twisted Higgs bundle, moduli space, Hitchin fibration, Eynard-Orantin differential, algebraic curve, Riemann surface, spectral curve, quantum curve, Galois covering, ramification, $b$-geometry, deformation theory.}

\section{Introduction}

Topological recursion is a mathematical procedure that can be interpreted in many ways, one of which is as a mechanism for turning a spectral curve from a classical integrable system into its quantum counterpart.  This procedure has been demonstrated to simultaneously solve problems in enumerative geometry and produce exact solutions to the Schr\"odinger equation defined by the quantum curve.  As such, topological recursion has received much attention in both the theoretical physics and algebro-symplectic geometry communities.  At the same time, Higgs bundles are geometric objects arising from physics by way of the dimensionally-reduced self-dual Yang-Mills equations, now known as the Hitchin equations.  The moduli space of these objects is a completely integrable Hamiltonian system with spectral curves arising directly from the characteristic data of the Higgs bundles themselves.  One can then ask about Hitchin spectral curves as input data for topological recursion.  This has been explored in a number of papers by Dumitrescu-Mulase \cite{DumitrescuMulase14, DumitrescuMulase14b, DumitrescuMulase17, DumitrescuMulase18} as well as Baraglia-Huang \cite{BaragliaHuang17}.

\subsection{Higgs bundles}

A \emph{Higgs bundle} is a pair $(\mathcal{E}, \phi)$, where $\mathcal{E}$ is a holomorphic vector bundle with a one-form valued endomorphism $\phi\in H^{0}(\text{End}\mathcal{E}\otimes K)$, called a \emph{Higgs field}.  Appearing initially as solutions to a dimensionally-reduced version of the self-dual Yang-Mills equations, they can be generalized by allowing $\phi$ to have poles (\emph{cf.} \cite{BodenYokogawa96, DumitrescuMulase14b, RayanSchaposnik2020}) or by allowing $\phi$ to take values in another holomorphic line bundle $\mathcal{L}$ (\emph{cf.}\cite{Nitsure91, Rayan13}).  Under sufficient stability conditions, we can consider the moduli space of Higgs bundles
\begin{equation*}
    \mathcal{M}_{X}^{\mathcal{L}} = \frac{\Big\{ \text{stable}\ \mathcal{L}\text{-twisted Higgs bundles} \Big\}}{\text{conjugation}}.
\end{equation*}
The moduli space of Higgs bundles $\mathcal{M}_{X}^{\mathcal{L}}$ contains rich geometry, as it is a completely integrable Hamiltonian system, and admits a hyperk\"ahler structure in the ordinary ($\mathcal{L} = K$) setting.    

\subsection{Topological recursion}

Topological recursion, originally introduced in \cite{ChekhovEynardOrantin06, EynardOrantin07b, EynardOrantin08}, is a recursive formula that associates to a spectral curve $S$, which is a complex algebraic curve arising as the spectrum of a matrix-valued function with additional conditions, a family of multi-differentials $W_{g,n}$.  These \emph{Eynard-Orantin differentials} are built out of canonical geometric data of the spectral curve and its embedding into the cotangent bundle of a base curve $K_{X}$ by
\begin{itemize}
    \item $W_{0,1}$ is a meromorphic $1$-form on $S$ (typically chosen to be the tautological section of $K_{X}$); and
    \item $W_{0,2}(z_1,z_2) = B(z_{1},z_{2})$,
\end{itemize}
with the remaining terms of $2g - 2 + n \geq 0$, being defined recursively by
\begin{align*}
    W_{g,n+1}(z_0, \textbf{z}) = \sum_{p\in R}&\mathrm{Res}_{z = p}K_{p}(z_0,z)\Big[ W_{g-1,n+2}(z, \sigma_{p}(z),\textbf{z}) \\
    \nonumber &+\sum_{\substack{g_1+g_2=g \\ I\cup J = \textbf{z}}}^{'}W_{g_1, |I|+1}(z,I)W_{g_2, |J|+1}(\sigma_{p}(z), J)\Big],
\end{align*}
where the prime signifies summation excluding the cases $(g_1,I)$ or $(g_2,J) = (0,0)$.\\

Topological recursion was developed in the context of matrix models and random matrix theory, but has since shown close ties to problems and fundamental structures in enumerative geometry.  From this recursion procedure, it is possible to recover many known invariants: Weil-Peterson volumes (Mirzakhani's recursion) \cite{Mirzakhani07, EynardOrantin07}, Hurwitz numbers \cite{GouldenJacksonVainshtein00, EynardMulaseSafnuk11, BouchardMarino08}, Virasoro constraints for two-dimensional gravity \cite{DijkgraafVerlinde91}, and Tutte's enumeration of maps \cite{Eynard04, AlexandrovMironovMorozov05}.  There are conjectures that topological recursion is also related to knot invariants \cite{DijkgraafFujiManabe11, BorotEynard12}.  As such, topological recursion sheds light on a myriad of deep and mysterious connections between topology, algebraic and differential geometry, representation theory, combinatorics, and physics.\\

From the Higgs field, we can produce a \emph{Hitchin spectral curve} by looking at the zero variety of its characteristic equation (when interpreted correctly).  When this spectral curve satisfies the correct properties, it becomes a candidate curve on which to apply topological recursion.  Recent work has been done to understand the relationship between topological recursion and Higgs bundles.  Dumitrescu-Mulase \cite{DumitrescuMulase14, DumitrescuMulase14b, DumitrescuMulase17, DumitrescuMulase18} have looked at generalizing topological recursion to a larger class of Hitchin spectral curves and the quantization of these spectral curves, while Baraglia-Huang \cite{BaragliaHuang17}, Bertola-Korotkin \cite{BertolaKorotkin18} and Chaimanwong et al. \cite{ChaimanowongNorbury20} have investigated the relationship between topological recursion on Hitchin spectral curves and geometric properties of the moduli space of Higgs bundles.

\subsection{Geometry of Hitchin moduli spaces}

The complex integrable system structure of the ordinary Hitchin moduli space $\mathcal{M}_{X}$ gives rise to a special K\"ahler structure on the Hitchin base.  This special K\"ahler structure can be written in terms of the period matrix of the spectral curve at the point in the Hitchin base.  The special K\"ahler metric combines with a metric along the fibres of the moduli space to produce the semi-flat metric on the regular locus of $\mathcal{M}_{X}$.  This can be thought of as an approximation of the complete hyperk\"ahler metric.  Baraglia-Huang \cite{BaragliaHuang17} show that the Taylor series expansion of the period matrix (and by extension, information about the hyperk\"ahler metric) about a point in the base can be computed using the $g=0$ Eynard-Orantin differentials on the Hitchin spectral curve associated to that point.

\begin{thm}[Baraglia-Huang,\cite{BaragliaHuang17}]
    \begin{equation}        \partial_{i_{1}}\partial_{i_{2}}\dots\partial_{i_{m-2}}\tau_{i_{m-1}i_{m}} = -\left( \frac{i}{2\pi} \right)^{m-1} \int_{p_{i_{1}}\in b_{i_{1}}}\dots\int_{p_{m}\in b_{i_{m}}} W_{0,m}(p_{1},...,p_{m})
    \end{equation}
\end{thm}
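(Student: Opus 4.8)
The plan is to prove the identity by induction on $m$, resting on two structural facts about the family of Hitchin spectral curves over the regular locus of the Hitchin base. There each spectral curve $S$ is smooth and compact of some genus $g_{S}$; fix a Torelli marking $\{a_{i},b_{i}\}$ of $H_{1}(S,\Z)$ and the dual normalized holomorphic differentials $\{\omega_{i}\}$, so that by definition $\tau_{ij}=\oint_{b_{i}}\omega_{j}$. The first fact is that the periods $a_{i}:=\oint_{a_{i}}\lambda$ of the tautological (Seiberg--Witten) one-form $\lambda=W_{0,1}$ serve as local holomorphic coordinates on the base, and that the induced infinitesimal deformation of the spectral curve satisfies $\partial_{a_{i}}\lambda=\omega_{i}$. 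This is the integrable-systems input: the isomorphism $T(\text{base})\cong H^{0}(S,K_{S})$ that sends a variation of the coefficients of the characteristic polynomial to the \emph{automatically holomorphic} variation of $\lambda$, together with the normalization $\oint_{a_{j}}\partial_{a_{i}}\lambda=\delta_{ij}$. The second fact is that, because $\omega_{i}$ is recovered from the fundamental bidifferential by $\oint_{b_{i}}W_{0,2}(\cdot,z)=2\pi i\,\omega_{i}(z)$, the deformation $\partial_{a_{i}}$ is exactly of the type for which the Eynard--Orantin variational formula applies, and it propagates through the whole recursion:
\[ \frac{\partial}{\partial a_{i}}\,W_{0,n}(z_{1},\dots,z_{n})\;=\;\frac{1}{2\pi i}\oint_{z\in b_{i}}W_{0,n+1}(z,z_{1},\dots,z_{n}). \]

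The base case $m=2$ is then pure normalization: $\tau_{ij}=\oint_{b_{i}}\omega_{j}=\tfrac{1}{2\pi i}\oint_{b_{i}}\oint_{b_{j}}W_{0,2}=-\tfrac{i}{2\pi}\oint_{b_{i}}\oint_{b_{j}}W_{0,2}$. For the inductive step, apply $\partial_{a_{i_{0}}}$ to the level-$m$ identity. The homology classes $b_{i_{k}}$ can be transported locally constantly over the base, and the genus-zero differentials $W_{0,n}$ with $n\geq 3$ have no residues along the ramification divisor of the spectral cover, which the cycles avoid; hence the iterated contour integrals are well defined and $\partial_{a_{i_{0}}}$ passes underneath them. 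Applying the variational formula to $W_{0,m}$ introduces one additional integration $\oint_{b_{i_{0}}}$ against $W_{0,m+1}$ and one additional factor $\tfrac{1}{2\pi i}$; collecting these powers and matching the orientation conventions advances the prefactor from $-(i/2\pi)^{m-1}$ to $-(i/2\pi)^{m}$, which is precisely the level-$(m+1)$ statement.

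The step I expect to be the main obstacle is the first one: reconciling the deformation theory of the Hitchin system with the variational framework of topological recursion. Concretely one must establish that $\partial_{a_{i}}\lambda$ is \emph{genuinely} holomorphic rather than merely holomorphic modulo exact forms --- this uses the explicit description of the family of spectral curves inside $\mathrm{Tot}(K_{X})$ and a Gauss--Manin identification of neighbouring fibres to even define $\partial_{a_{i}}$ on the $W_{0,n}$; that a single Torelli marking propagates consistently over a neighbourhood of the base so that $\tau$, the $\omega_{i}$ and $W_{0,2}$ all vary holomorphically; and that non-degeneracy of the period map makes the $a_{i}$ honest coordinates. Once these are secured, the Eynard--Orantin variational formula and the bookkeeping of $2\pi i$ factors are routine.
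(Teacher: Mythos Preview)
Your approach is essentially the one the paper uses. Note first that the paper does not itself prove this statement: it is quoted from \cite{BaragliaHuang17} as motivation, and the paper's own contribution is the twisted analogue (Theorem~\ref{ThmBigThm}). That twisted version is proved exactly as you propose --- iterate the variational formula for the Eynard--Orantin differentials starting from $W_{0,2}=\widehat{B}$ and the identity $\oint_{b_{j}}\oint_{b_{k}}W_{0,2}=2\pi i\,\tau_{jk}$ --- after first establishing the variational formula via a Rauch-type computation for the Bergman kernel under the relevant Kodaira--Spencer deformation. Your diagnosis of the main obstacle is also on target: Sections~\ref{SectionDeformationTheory} and~\ref{SectionVariations} of the paper are devoted precisely to setting up the deformation vector field $\delta$, identifying $\delta\theta$ as a holomorphic one-form on $S$, and proving the Rauch formula needed to feed the Eynard--Orantin machinery.

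One genuine correction: the variational formula here carries a \emph{minus} sign,
\[
\delta_{i}W_{0,n}(p_{1},\dots,p_{n})\;=\;-\frac{1}{2\pi i}\int_{p\in b_{i}}W_{0,n+1}(p,p_{1},\dots,p_{n}),
\]
not the plus sign you wrote. With your sign each step multiplies by $\tfrac{1}{2\pi i}=-\tfrac{i}{2\pi}$, so the prefactor would go from $-(\tfrac{i}{2\pi})^{m-1}$ to $+(\tfrac{i}{2\pi})^{m}$ and the induction fails. With the correct sign each step multiplies by $-\tfrac{1}{2\pi i}=\tfrac{i}{2\pi}$ and the prefactor advances to $-(\tfrac{i}{2\pi})^{m}$ as required. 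Your phrase ``matching the orientation conventions'' hides exactly this point; it is not a convention but the sign inherited from the Rauch formula.
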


\subsection{Geometry of $\mathcal{L}$-twisted Hitchin spectral curves}

Notable changes occur when considering $\mathcal{L}$-twisted Higgs bundles in lieu of ordinary Higgs bundles.  In the twisted moduli space, the Hitchin base and the fibres of the moduli space no longer have the same dimension.  This means we cannot view the Hitchin base as the space of deformation of the spectral curve as was done in the $\mathcal{L}=K$ setting.  In the ordinary setting, the tautological section $\eta$ (which is used to define Hitchin spectral curves) is related to the canonical symplectic structure on the cotangent bundle, however in the twisted setting, there is no longer a canonical symplectic structure, and so $\eta$ is viewed purely as an algebraic object valued in $\pi^{*}\mathcal{L}$.  The lack of a canonical symplectic structure also removes a ``nice'' canonical coordinate system in which to work.\\

We study the hypercohomology of stable $\mathcal{L}$-twisted Higgs bundles $(\mathcal{E}, \phi)$ on a Riemann surface $X$ with spectral curve $S$ given by the two double complexes
\begin{align*}
    D &= (\delta, \wedge\phi),\\
    D' &= (\wedge\phi, \delta),
\end{align*}
where $\wedge\phi$ is the differential coming from the Higgs field, and $\delta$ is the \v Cech differential.  We find a suitable expression for the tangent space of the fibres,
\begin{align*}
    T_{(\mathcal{E},\phi)}\mathcal{M}_{X}^{\mathcal{L}}(r,d) &\cong T_{\mathcal{Q}}Jac(S)\times T_{h(\mathcal{E},\phi)}\mathcal{B}\\
    &= H^{1}\left(\bigoplus^{r-1}_{i=0} \mathcal{L}^{-i}\right)\times \mathcal{B}.
\end{align*}
We then define a so-called effective base, which is dual to $T_{Q}Jac(S)$.
\begin{dfn}
We call $\mathcal{B}_{eff} \coloneqq H^{0}\left(\bigoplus^{r-1}_{i=0} \mathcal{L}^{i}\otimes K\right)$ the \textbf{effective Hitchin base}.
\end{dfn}

Choosing a section $s\in H^{0}(X,K^{*}\otimes\mathcal{L})$, we reframe our $\mathcal{L}$-twisted Higgs bundle as a $K(Z)$-valued Higgs bundle, where $Z$ is the zero-divisor of $s$.  This imposes the structure of $b$-geometry onto $X$ and $S$, and thus a log-symplectic structure.  In this $b$-geometric picture, we define the twisted Eynard-Orantin invariants (for suitably defined $\widehat{B}$ and modified $K_{p}$ below).

\begin{dfn}
The \textbf{$\mathcal{L}$-twisted Eynard-Orantin differentials} $W_{g,n}$ are meromorphic sections of the $n$-th exterior tensor product $K_{S}(Z)^{\boxtimes n}$, i.e. multi-$b$-differentials, defined as follows:\\

The initial conditions of the recursion are given by:
\begin{align}
    &W_{0,1}(z) = y(z)\frac{dx(z)}{x(z)}\\
    &W_{0,2}(z_1,z_2) = \widehat{B}(z_{1},z_{2}).
\end{align}
For all $g,n\in \N$ and $2g - 2 + n \geq 0$, define $W_{g,n}$ recursively by
\begin{align}
    W_{g,n+1}(z_0, \textbf{z}) = \sum_{p\in R}&\mathrm{Res}_{z = p}K_{p}(z_0,z)\Big[ W_{g-1,n+2}(z, \sigma_{p}(z),\textbf{z})\\
    \nonumber &+\sum_{\substack{g_1+g_2=g \\ I\cup J = \textbf{z}}}^{'}W_{g_1, |I|+1}(z,I)W_{g_2, |J|+1}(\sigma_{p}(z), J)\Big] 
\end{align}
where the prime signifies summation excluding the cases $(g_1,I)$ or $(g_2,J) = (0,0)$.\\
\end{dfn}

We argue that local computations of the twisted Eynard-Orantin differentials mirror the $\mathcal{L}=K$ setting, so the twisted differentials satisfy the same properties as the ordinary ones.  In particular, they also satisfy a variational formula.

\begin{thm}[Variational Formula for twisted-E-O invariants]
For $g + k > 1$,
\begin{equation}
    \delta_{i}W_{g,k}(p_{1},...,p_{k}) = -\frac{1}{2\pi i} \int_{p\in b_{i}} W_{g,k+1}(p,p_{1},...,p_{k}),
\end{equation}
where the cycle $b_{i}$ is chosen so that it contains no ramification points.
\end{thm}

Using this variational formula, we prove that the $g=0$ twisted Eynard-Orantin differentials compute the Taylor expansion of the period matrix of $S$ in the image of the effective Hitchin base.

\begin{thm}\label{ThmMainIntro}
    \begin{equation}        \partial_{i_{1}}\partial_{i_{2}}\dots\partial_{i_{m-2}}\tau_{i_{m-1}i_{m}} = -\left( \frac{i}{2\pi} \right)^{m-1} \int_{p_{i_{1}}\in b_{i_{1}}}\dots\int_{p_{m}\in b_{i_{m}}} W_{0,m}(p_{1},...,p_{m}).
    \end{equation}
\end{thm}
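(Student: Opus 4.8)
The plan is to follow the Baraglia--Huang strategy adapted to the $b$-geometric setting, using the variational formula for the twisted Eynard--Orantin invariants as the engine. First I would set up the precise dictionary between the effective Hitchin base $\mathcal{B}_{\mathrm{eff}}$ and deformations of the spectral curve $S$ inside $\mathrm{Tot}(\mathcal{L})$ (equivalently, inside the $b$-cotangent bundle once $s$ is fixed): a tangent vector $\partial_i$ to $\mathcal{B}_{\mathrm{eff}}$ moves the coefficients of the spectral curve equation, hence moves $S$ in a one-parameter family, and I would identify the corresponding infinitesimal variation of the normalized holomorphic $b$-differentials. The key input is that $W_{0,2} = \widehat{B}$ is the (twisted) fundamental bidifferential normalized so that $\oint_{a_j} \widehat{B}(\cdot,z) = 0$, whence the period-matrix entry $\tau_{jk}$ is recovered as $\oint_{a_k}\oint_{b_j}\widehat{B}$, and its variation under $\partial_i$ is governed by a Rauch-type variational formula. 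Concretely, $\partial_i \widehat{B}(z_1,z_2)$ should equal (up to the normalization constant) $\oint_{b_i} W_{0,3}(p,z_1,z_2)$ — this is exactly the $g=0,\ k=2$ case of the variational formula stated above, which I am allowed to assume.

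The induction then runs on $m$. The base case $m=2$ is the identification $\partial_i \tau_{jk} = -\frac{i}{2\pi}\oint_{b_i} W_{0,3}$, obtained by integrating the $k=2$ variational formula over the cycles $b_j, b_k$ and matching constants; care is needed here because $W_{0,3}$ must have vanishing $a$-periods in each variable so that the iterated integral over $b$-cycles is well defined, and because the cycles must be pushed off the ramification locus $R$ (and, in the twisted picture, off the zero divisor $Z$ of $s$) — this is why the theorem's hypotheses on $s$ and on the $b_i$ matter. For the inductive step, assume the formula for $m-1$; apply $\partial_{i_1}$ to both sides, move the derivative under the iterated integral, and use the variational formula once more to replace $\partial_{i_1} W_{0,m-1}(p_2,\dots,p_m)$ by $-\frac{1}{2\pi i}\oint_{b_{i_1}} W_{0,m}(p_1,\dots,p_m)$. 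Collecting the constant $\left(\tfrac{i}{2\pi}\right)^{m-2}\cdot\left(\tfrac{i}{2\pi}\right) = \left(\tfrac{i}{2\pi}\right)^{m-1}$ and the extra cycle integral yields the claimed identity.

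Two technical points deserve attention. First, one must check that differentiating under the integral sign is legitimate: the integrand $W_{0,m}$ is a meromorphic $b$-differential whose only singularities are at coincident points and along $Z$, none of which meet the chosen cycles $b_i$, so the integrals depend smoothly (indeed holomorphically) on the base point and the interchange is valid. Second, one must confirm that the whole argument is insensitive to replacing $K$-valued data by $K(Z)$-valued data — i.e.\ that the residues defining $K_p$ and the periods of $\widehat{B}$ are computed on the smooth normalized spectral curve and the $b$-structure only reorganizes bookkeeping near $Z$; this is precisely the content of the earlier claim that ``local computations of the twisted Eynard--Orantin differentials mirror the $\mathcal{L}=K$ setting.''

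The main obstacle I anticipate is the base case, specifically pinning down the Rauch-type variational formula $\partial_i \widehat{B} = \oint_{b_i} W_{0,3}$ in the twisted setting with the correct normalization constant and sign, and verifying that the section $s$ (through its divisor $Z$) does not introduce anomalous boundary or residue contributions when the cycles are deformed. Once that identity is in hand with the right constant, the induction is essentially formal. A secondary subtlety is ensuring the symmetry of $\tau$ and of the mixed partials is respected — i.e.\ that the answer does not depend on the order in which the $\partial_{i_j}$ are applied — which follows from the symmetry of the $W_{0,n}$ in their arguments but should be remarked upon.
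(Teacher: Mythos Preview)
Your proposal is correct and follows essentially the same route as the paper: the paper establishes the twisted Rauch variational formula and the variational formula $\delta_i W_{g,k} = -\tfrac{1}{2\pi i}\int_{b_i} W_{g,k+1}$, then obtains the theorem by iterating the latter starting from $W_{0,2}=\widehat{B}$ and integrating over the $b$-cycles, exactly the induction you outline. Two minor slips worth correcting: $\tau_{jk}$ is recovered from $\oint_{b_j}\oint_{b_k}\widehat{B}$ (both $b$-cycles, up to a factor of $2\pi i$), not $\oint_{a_k}\oint_{b_j}$; and what you call the base case ``$m=2$'' (one derivative of $\tau$, involving $W_{0,3}$) is in the paper's indexing the $m=3$ instance of the formula.
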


\subsection{Overview}

We will begin by establishing some foundational theory required to engage in the later sections.  In \emph{Section 2}, we review basic properties of Higgs bundles.  We approach this topic in both the usual case and the $\mathcal{L}$-twisted case. We then in \emph{Section 3} introduce the Eynard-Orantin topological recursion, and topological recursion for Hitchin spectral curves in the sense of Dumitrescu-Mulase.  Both sections are intended to be somewhat conversational primers as we do not assume intimate knowledge of Higgs bundles and the structure of Hitchin moduli spaces amongst readers coming from the topological recursion community or vice-versa. In \emph{Section 4}, we study the deformation associated with the $\mathcal{L}$-twisted moduli space, and define a ``twisted'' version of topological recursion.  We then explore the relationship between topological recursion and the the Taylor expansion of the period matrix of a given local spectral curve, culminating in Theorem \eqref{ThmMainIntro}.  Finally, in \emph{Section 5}, we discuss the further aims of the work, which includes further speculations about the geometry of the $\mathcal{L}$-twisted setting.\\

\noindent\emph{Acknowledgements.} The authors thank David Baraglia, Rapha\"el Belliard, Vincent Bouchard, Gaetan Borot, Nitin Kumar Chidambaram, Norman Do, Ron Donagi, Olivia Dumitrescu, Laura Fredrickson, Elba Garc\'ia-Failde, Omar Kidwai, Reinier Kramer, Motohico Mulase, Andy Neitzke, Nikita Nikolaev, Sean Lawton, Laura Schaposnik, Artur Sowa, Jacek Szmigielski, Kaori Tanaka, and Richard Wentworth for useful discussions. The authors are grateful to the organizers of the October 2023 Banff Workshop on Complex Lagrangians, Mirror Symmetry, and Quantization. The writing of this manuscript was finalized during this stimulating workshop. The second-named author is partially supported by a Natural Sciences and Engineering Research Council of Canada (NSERC) Discovery Grant, a Canadian Tri- Agency New Frontiers in Research (NFRF) Exploration Stream Grant, and a Pacific Institute for the Mathematical Sciences (PIMS) Collaborative Research Group (CRG) Award (for which this manuscript is PIMS Report Number PIMS-20240112-CRG34). During the research and preparation of this article, the first-named author held a doctoral fellowship created from the second-named author's NFRF Exploration grant as well as a Teacher-Scholar Doctoral Fellowship from the University of Saskatchewan.

\section{Higgs bundles}



\subsection{Hitchin equations}

Named by Simpson \cite{Simpson92}, the objects called ``Higgs bundles'' were first introduced by Hitchin in \cite{Hitchin87, Hitchin86} as solutions of the self-dual dimensionally-reduced Yang-Mills equations, called the \emph{Hitchin equations}, on a compact Riemann surface $X$.  They play a key role in mathematical physics as a bridge between gauge theory and integrable systems.  Our interest in studying Higgs bundles comes from their relation to spectral curves.  A Higgs bundle is a holomorphic vector bundle with a $K$-valued endomorphism.  Locally, this endomorphism looks like a matrix, so we can look at its characteristic equation.  This generates a spectral curve from a Higgs bundle.

\begin{dfn}\label{DfnHitchinEq}
    Let $\mathcal{E}$ be a Hermitian vector bundle over a Riemann surface $X$ with unitary connection $A$, and $\phi: \mathcal{E}\rightarrow \mathcal{E}\otimes K$ be a smooth linear map.  The \textbf{Hitchin equations} are
    \begin{align*}
        F_{0}(A) + \phi\wedge\phi^{*} &= 0\\
        \nonumber \overline{\partial}_{A}\phi &= 0,
    \end{align*}
    where $F_{0}(A)$ is the trace-free curvature of $A$, and $\overline{\partial}_{A}:\mathcal{C}^{\infty}(\mathcal{E})\to \Omega^{0,1}(\mathcal{E})$ is the holomorphic structure on $\mathcal{E}$ obtained by taking the $(0,1)$-part of $A$.
\end{dfn}

\begin{dfn}
(cf. \cite{Hitchin87}) A \textbf{Higgs bundle} is a pair $(\mathcal{E},\phi)$, where $\mathcal{E}$ is a holomorphic vector bundle on a Riemann surface $X$ and $\phi: \mathcal{E}\rightarrow \mathcal{E}\otimes K$ is a global holomorphic map. The map $\phi$ is called a \textbf{Higgs field}.  The rank and degree of $(\mathcal{E},\phi)$ are the rank and degree of $\mathcal{E}$ respectively.
\end{dfn}

There is a striking similarity between a Higgs bundle $(\mathcal{E},\phi)$ and a solution $(A,\phi)$ of the Hitchin equations.  A natural question is to ask which Higgs bundles correspond to solutions of the Hitchin equations.  The answer is precisely the \emph{stable} Higgs bundles.

\begin{dfn}
    Let $\mathcal{E}$ be a holomorphic vector bundle on a Riemann surface $X$.  The \textbf{slope} of $\mathcal{E}$ is
    \begin{equation}
        \mu(\mathcal{E}) \coloneqq \frac{\deg(\mathcal{E})}{\mathrm{rk}(\mathcal{E})}.
    \end{equation}
\end{dfn}

\begin{dfn}
    A Higgs bundle $(\mathcal{E},\phi)$ is \textbf{stable} if
    \begin{equation}
        \mu(\mathcal{U}) < \mu(\mathcal{E})
    \end{equation}
    for all subbundles $0 \subsetneq \mathcal{U} \subsetneq \mathcal{E}$ satisfying $\ \phi(\mathcal{U})\subseteq \mathcal{U}\otimes K$, and \textbf{semi-stable} if equality is permitted in the slope condition.
\end{dfn}

The equivalence between stable Higgs bundles and solutions to the Hitchin equations is an instance of the Hitchin-Kobayashi theorem (Theorem 4.3 of \cite{Hitchin87}).\\


Before we can make sense of the relationship between Higgs bundles and solutions to the Hitchin equations, we first want to know what kinds of Riemann surfaces have stable Higgs bundles.  For genus $0$ Riemann surfaces, i.e. $X = \mathbb{P}^{1}$, we have the following result:

\begin{prop}
All rank $r \geq 2$ Higgs bundles $(\mathcal{E},\phi)$ on $\mathbb{P}^{1}$ are unstable.
\end{prop}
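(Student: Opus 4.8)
The plan is to exploit the structure theorem for vector bundles on $\mathbb{P}^{1}$ (Grothendieck's splitting theorem) together with the fact that $K_{\mathbb{P}^{1}} = \mathcal{O}(-2)$ has negative degree, so that no nontrivial Higgs field can help stabilize a bundle whose destabilizing subsheaf is already $\phi$-invariant. First I would write $\mathcal{E} \cong \bigoplus_{j=1}^{r}\mathcal{O}(a_{j})$ with $a_{1}\geq a_{2}\geq\cdots\geq a_{r}$, so that $\mu(\mathcal{E}) = \frac{1}{r}\sum_{j}a_{j}$. The natural candidate for a destabilizing subbundle is the maximal-slope summand (or the sum of top summands), namely $\mathcal{U} = \mathcal{O}(a_{1})$, which has $\mu(\mathcal{U}) = a_{1}\geq\mu(\mathcal{E})$; so it would violate stability \emph{provided} $\phi(\mathcal{U})\subseteq\mathcal{U}\otimes K$. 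The crux is therefore to show that such a $\phi$-invariant destabilizing subbundle always exists.

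The key observation is that $\phi$ restricted to the top summand, composed with projection to the lower summands, is a section of $\mathrm{Hom}(\mathcal{O}(a_{1}), \mathcal{O}(a_{j})\otimes K) = \mathcal{O}(a_{j}-a_{1}-2)$ for each $j$. If all $a_{j} = a_{1}$ (i.e.\ $\mathcal{E}$ is a direct sum of copies of a single line bundle), then $\mathcal{E} = \mathcal{O}(a_1)^{\oplus r}$ and \emph{every} subbundle is $\phi$-invariant up to the twist — more carefully, since $\mathcal{O}(a_1)$ is $\phi$-invariant only if the off-diagonal block lands in $\mathcal{O}(-2) = H^0(\mathcal{O}(-2)) = 0$, which it does, so $\mathcal{U} = \mathcal{O}(a_1)$ works and destabilizes. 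If instead the splitting type is nonconstant, let $\mathcal{U}$ be the sum of all summands $\mathcal{O}(a_j)$ with $a_j$ equal to the \emph{maximum} value $a_1$; then for any summand $\mathcal{O}(a_k)$ with $a_k < a_1$ we have $a_k - a_1 - 2 < 0$, hence $H^0(\mathcal{O}(a_k - a_1 - 2)) = 0$, so the component of $\phi$ from $\mathcal{U}$ into $\mathcal{O}(a_k)\otimes K$ vanishes. Thus $\phi(\mathcal{U})\subseteq\mathcal{U}\otimes K$, and since $\mu(\mathcal{U}) = a_1 \geq \mu(\mathcal{E})$ with $\mathcal{U}$ a proper nonzero subbundle (properness uses $r\geq 2$ and that not all $a_j$ are maximal, while nonvanishing is automatic), stability fails.

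Finally I would note the one remaining case is subsumed: if $\mathcal{E}$ is semistable as a bundle it is balanced, $\mathcal{E} \cong \mathcal{O}(a)^{\oplus r}$, already handled above; and if $\mathcal{E}$ is unstable as a bundle then its maximal destabilizing subbundle is $\phi$-invariant for the standard reason (a map from a semistable sheaf of larger slope to a twist down by $\deg K = -2$, hence of strictly smaller slope, must vanish), so again stability of the Higgs bundle fails. The main obstacle is purely bookkeeping: making sure the degenerate balanced case $\mathcal{E}=\mathcal{O}(a)^{\oplus r}$ is correctly handled, since there the ``obvious'' destabilizing subbundle has slope \emph{equal} to $\mu(\mathcal{E})$ and one must check it is genuinely $\phi$-invariant (which it is, because $H^0(K) = H^0(\mathcal{O}(-2)) = 0$ forces the relevant off-diagonal Higgs components to vanish) — so the slope inequality $\mu(\mathcal{U}) = a = \mu(\mathcal{E})$ is not strict, confirming non-stability (though only semistability, not instability, in that sub-case; the proposition's assertion of instability then forces us to observe that this balanced case in fact never arises for a \emph{stable} Higgs bundle, which is all that is claimed).
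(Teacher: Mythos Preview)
Your proof is correct and follows essentially the same approach as the paper: Grothendieck splitting, then use $K_{\mathbb{P}^1}=\mathcal{O}(-2)$ to force the Higgs field to preserve a maximal-degree summand. The paper streamlines your argument slightly by taking a \emph{single} summand $\mathcal{O}(a)$ with $a=\max_i a_i$ and observing that $H^0(\mathcal{O}(a_i-a-2))=0$ for \emph{every} $i$ (including those with $a_i=a$, since $a_i-a-2\leq -2$), so the entire column of $\phi$ vanishes and no case split between balanced and unbalanced bundles is needed; your closing worry about strict versus non-strict inequality is also moot, since in this paper ``unstable'' simply means ``not stable'', and $\mu(\mathcal{O}(a))\geq\mu(\mathcal{E})$ already violates the strict inequality in the stability condition.
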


\begin{proof}
On $\mathbb{P}^{1}$ we have that $K = \mathcal{O}(-2)$. By the Birkhoff-Grothendieck Theorem,
\begin{equation*}
    \mathcal{E}=\oplus_{i=1}^{r}\mathcal{O}(a_{i}),
\end{equation*}
for some $a_{i}$.\\

The Higgs bundle $\phi$ is given by a matrix with components
\begin{equation}
    (\phi)_{ij}=\phi_{ij}:\mathcal{O}(a_i)\rightarrow\mathcal{O}(a_j)\otimes\mathcal{O}(-2).
\end{equation}
Let $a\coloneqq max_{i}\{a_i\}$.  For every $i$ we have
\begin{equation}
    H^0\left(\mathcal{O}(a)^*\otimes\mathcal{O}(a_i)\otimes\mathcal{O}(-2)\right)=H^0(\mathcal{O}(a_i-a-2)).
\end{equation}
By the choice of $a$ we have
\begin{align*}
    a \geq a_i\ &\Rightarrow a_i - a \leq 0\\
    &\Rightarrow a_i - a -2 \leq -2,
\end{align*}
and so
\begin{equation}
    \dim H^0\left(\mathcal{O}(a)^*\otimes\mathcal{O}(a_i)\otimes\mathcal{O}(-2)\right) = 0
\end{equation}
for all $i$.  This means that the column corresponding to $a$ is a column of zeros and thus
\begin{equation}
    \phi: \mathcal{O}(a)\rightarrow \mathcal{O}(a_i)\otimes\mathcal{O}(-2)
\end{equation}
is the zero map.  This shows that $\mathcal{O}(a)$ is a $\phi$-invariant subbundle of $\mathcal{E}$ with $\deg(\mathcal{O}(a))=a$ and $\mbox{rk}(\mathcal{O}(a))=1$.  Comparing the slopes yields

\begin{align*}
    \mu(\mathcal{E}) &= \frac{\sum_{i=1}^{r}a_i}{r}\\
    &\leq \frac{\sum_{i=1}^{r}a}{r}\\
    &= \frac{ra}{r}\\
    &= a\\
    &= \mu(\mathcal{O}(a))
\end{align*}

\noindent meaning that $(\mathcal{E},\phi)$ is not stable.
\end{proof}

A similar result can be proved for genus $1$ Riemann surfaces.  This means the only Riemann surfaces that admit stable Higgs bundles have genus $g \geq 2$.  The simplest examples of such Higgs bundles are given when $\phi = 0$. In this setting, the Hitchin equations reduce to $F(A) = 0$, whose solutions are flat unitary connections.  These objects are already well studied, and a famous result of Narasimhan-Seshadri relates flat unitary connections to stable holomorphic bundles \cite{NarasimhanSeshadri65}.  In particular, the moduli space of stable holomorphic bundles has positive dimension $3g - 3$ when the rank is $2$ and $g > 1$.  The Hitchin-Kobayashi correspondence can be regarded as a generalization of Narasimhan-Seshadri to the non-unitary case.  The non-unitary case arises precisely when $\phi \neq 0$.  An example in rank $2$ of a stable Higgs bundle with $\phi \neq 0$ is given by $\mathcal{E} = K\oplus\mathcal{O}$ with Higgs field given by
\begin{equation}
     \phi = 
    \begin{bmatrix}
    0 & \alpha\\
    1 & 0
    \end{bmatrix}.
\end{equation}
The $1$ can be interpreted as the identity endomorphism as the bottom left entry of $\phi$ is a map $$K\to \mathcal{O}\otimes K = K.$$  The section $\alpha$ is a non-zero element of $H^{0}(X,\mathcal{O}\otimes K\otimes K) = H^{0}(X,K^{2})$, called a \emph{quadratic differential}.  The form of $\phi$ prevents the existence of a proper invariant subbundle $\mathcal{U}$, and therefore the Higgs bundle is automatically stable.  Note that $\alpha \neq 0$ necessitates that $\deg K > 0$, which forces $g$ to be at least $2$.\\

The dimension of $H^{0}(X,K^{2})$ over $\mathbb{C}$ is $3g - 3$, which can be computed via Riemann-Roch and Serre duality.  This entails that this example generates a $(3g - 3)$-dimensional family of stable Higgs bundles, none of which are equivalent to one another as $-\alpha$ is the determinant of the Higgs field and so no two choices of $\alpha$ give Higgs bundles that are isomorphic under change of basis in $\mathcal{E}$.  This indicates that for genus $2$ or larger, the moduli space of rank $2$ Higgs bundles is at least $(3g - 3)$-dimensional. In fact, it is $(6g - 6)$-dimensional.  (This is, of course, twice the dimension of the moduli space of stable bundles, which we will revisit in calculations below.)

\subsubsection{Twisted Higgs bundles}
Riemann surfaces of genus $0$ and $1$ are of interest across of variety of problems, with $\mathbb{P}^{1}$ playing an important role later on in this thesis.  While they do not admit stable Higgs bundles, we can modify the definition of a Higgs bundle to allow for stable bundles.  There are two natural ways of doing this.  We can drop the holomorphic condition on the Higgs field and allow our Higgs bundle to have a Higgs fields with poles.  This leads to \emph{meromorphic Higgs bundles}, or, with more initial data, \emph{parabolic Higgs bundles}.  More generally, we can replace $K$ with another holomorphic line bundle $\mathcal{L}$ on $X$, producing \emph{twisted Higgs bundles}.  More formally:

\begin{dfn}
    (cf. \cite{DumitrescuMulase14b, RayanSchaposnik2020}) Let $D$ be a divisor on $X$.  A \textbf{meromorphic Higgs bundle} with poles at $D$ is a pair $(\mathcal{E},\phi)$, where $\mathcal{E}$ is a holomorphic vector bundle on a Riemann surface $X$ and $\phi: \mathcal{E}\rightarrow \mathcal{E}\otimes K(D)$, where $K(D) = K\otimes D$.
\end{dfn}

\begin{dfn}
    (cf. \cite{Nitsure91, Rayan13}) Let $\mathcal{L}$ be a holomorphic line bundle on a Riemann surface $X$. An \textbf{$\mathcal{L}$-twisted Higgs bundle} on $X$ is a pair $(\mathcal{E},\phi)$, where $\mathcal{E}$ is a holomorphic vector bundle,  and $\phi: \mathcal{E}\rightarrow \mathcal{E}\otimes \mathcal L$.
\end{dfn}

\begin{remark}
    \emph{When working with $\mathcal{L}$-twisted Higgs bundles, we will only be considering the case where $\deg\mathcal{L} > \deg K$.}
\end{remark}


Such Higgs bundles come up in the study of generalized complex geometry as co-Higgs bundles ($\mathcal{L} = K^{*}$) \cite{Rayan14}, in the study of quiver varieties \cite{RayanSundbo18,RayanSundbo19}, in the study of monodromy of the Hitchin map \cite{BaragliaSchaposnik15}, and in the study of representations of fundamental groups of compact K\"ahler manifolds \cite{GarciaRamanan00}.\\


A natural question to ask is if there is an analogous set of Hitchin equations (Definition \ref{DfnHitchinEq}) associated to an $\mathcal{L}$-twisted Higgs bundle.  We can suitably modify the Hitchin equations in the following way. The second equation,
\begin{equation*}
    \overline{\partial}_{A}\phi = 0
\end{equation*}
says that the Higgs field needs to be holomorphic with respect to the connection $A$, a condition that can be considered with no necessary change.  The first equation,
\begin{equation*}
    F_{0}(A) + \phi\wedge\phi^{*} = 0,
\end{equation*}
makes sense because both summands are endomorphism-valued two-forms.  In the twisted setting, $\phi$ is a section of $End(\mathcal{E})\otimes \mathcal{L}$, so in order to make sense of the first equation, we need to choose a section $s\in H^{0}(K\otimes \mathcal{L}^{*})$, with which to multiply $\phi$.  Like this, we can modify the first equation as
\begin{align*}
    F_{0}(A) + s\phi\wedge(s\phi)^{*} = F_{0}(A) + |s|^{2}\phi\wedge\phi^{*} = 0.
\end{align*}

We can also easily modify our notion of stability to this setting.

\begin{dfn}
    An $\mathcal{L}$-twisted Higgs bundle $(\mathcal{E},\phi)$ is \textbf{stable} if
    \begin{equation}
        \mu(\mathcal{U}) < \mu(\mathcal{E})
    \end{equation}
    for all subbundles $0 \subsetneq \mathcal{U} \subsetneq \mathcal{E}$ satisfying $\ \phi(\mathcal{U})\subseteq \mathcal{U}\otimes \mathcal{L}$, and \textbf{semi-stable} if equality is permitted in the slope condition.
\end{dfn}


\subsection{Moduli space of Higgs bundles}

For this section we follow the treatment in \cite{Rayan18}.  The correspondence between solutions to the Hitchin equations and ordinary Higgs bundles descends to the level of moduli spaces, where we have an equivalence between two moduli spaces.  The space of solutions yields a gauge-theoretic moduli space given by the space of solutions $(A, \phi)$ taken up to gauge equivalence.  It has the structure of a smooth, non-compact manifold, which can be interpreted as an infinite-dimensional hyperk\"ahler quotient.  This endows the moduli space with a hyperk\"ahler structure.  On the Higgs bundle side, the algebro-geometric moduli space is formed by quotienting the space of stable pairs $(\mathcal{E}, \phi)$ by the conjugation action of holomorphic automorphisms of $\mathcal{E}$.  This quotient has the structure of a non-singular, quasi-projective variety and can be interpreted as a geometric-invariant theory quotient, with stability condition given by the stability for Higgs bundles as above.  This equivalence extends to the meromorphic and $\mathcal{L}$-twisted setting; however, we lose some properties in the interim, such as the hyperk\"alher structure.\\

The main object of interest for us is the Higgs bundle moduli space, although when necessary, we will appeal to this correspondence to benefit from properties of the gauge-theoretic interpretation.  We will start by considering the larger class of $\mathcal{L}$-twisted Higgs bundles, and study some properties of the $\mathcal{L}=K$ case in a later subsection.  At a set-theoretic level, we define the moduli space of $\mathcal{L}$-twisted Higgs bundles in the following way:
\begin{dfn}
    Fix integers $r>0, d$ coprime.  The \textbf{moduli space of rank $r$, degree $d$ $\mathcal{L}$-twisted Higgs bundles},  $\mathcal{M}_{X}^{\mathcal{L}}(r,d)$, is defined by the quotient
    \begin{equation}
        \mathcal{M}_{X}^{\mathcal{L}}(r,d) = \frac{\{\mbox{stable rank } r, \textrm{degree } d\ \mathcal{L}\textrm{-twisted Higgs bundles } (\mathcal{E},\phi)\}}{\sim},
    \end{equation}
    where the equivalence relation is given by conjugation: $(\mathcal{E},\phi)\sim(\mathcal{E}',\phi')$ iff there exists an invertible bundle map $\psi:\mathcal{E}\to\mathcal{E}'$ such that $\phi' = \psi^{-1}\phi\psi$.
\end{dfn}

\begin{remark}
\emph{The coprime condition guarantees that $\mathcal{M}^{\mathcal{L}}_{X}(r,d)$ is a smooth manifold.}
\end{remark}

\begin{remark*}
    \emph{We will often simplify this notation by writing $\mathcal{M}_{X}^{\mathcal{L}}$, or when working in the $\mathcal{L}=K$ setting by writing $\mathcal{M}_{X}$.}
\end{remark*}

An important tool for studying $\mathcal{M}_{X}^{\mathcal{L}}(r,d)$ is the \emph{Hitchin map}:
\begin{dfn}
    The \textbf{Hitchin map}
    \begin{equation}
        \mathcal{H}: \mathcal{M}_{X}^{\mathcal{L}}(r,d)\to\mathcal{B} = \bigoplus_{i=1}^{r} H^{0}(X,\mathcal{L}^{i})
    \end{equation}
    sends an isomorphism class of a Higgs bundle $(\mathcal{E},\phi)$ to the $r$-tuple of coefficients of the characteristic polynomial of $\phi$.
    The affine space $\mathcal{B}$ is called the \textbf{Hitchin base}.
\end{dfn}

\begin{dfn}\label{DfnNilpotentCone}
    The \textbf{nilpotent cone} is the fiber of the Hitchin map above $0\in \mathcal{B}$, i.e. $\mathcal{H}^{-1}(0)$.
\end{dfn}

Our first step in studying $\mathcal{M}_{X}^{\mathcal{L}}$ is to know its dimension.  A result of Hitchin (in the $\mathcal{L}=K$ case) and Nitsure (in the general case) tells us that the dimension of the moduli space is given by:
\begin{prop}
(\cite{Hitchin87,Nitsure91})
    \begin{itemize}
        \item For $\deg\mathcal{L} > \deg K$:\ \  $\dim\mathcal{M}_{X}^{\mathcal{L}}(r,d) = r^{2}\deg\mathcal{L} + 1$.
        \item For $\mathcal{L} = K$:\ \ $\dim\mathcal{M}_{X}(r,d) = r^{2}(2g-2) + 2$.
    \end{itemize}
\end{prop}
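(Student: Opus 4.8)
The plan is to carry out the standard deformation-theoretic computation of Hitchin and Nitsure; it also makes contact with the hypercohomological setup used later in the paper. Fix a stable $\mathcal{L}$-twisted Higgs bundle $(\mathcal{E},\phi)$ of rank $r$ and degree $d$, and form the two-term complex of sheaves on $X$
\begin{equation*}
    C^{\bullet}\colon\qquad \mathrm{End}(\mathcal{E})\ \xrightarrow{\ \mathrm{ad}_{\phi}\ }\ \mathrm{End}(\mathcal{E})\otimes\mathcal{L},\qquad \mathrm{ad}_{\phi}(\psi)=[\phi,\psi],
\end{equation*}
placed in degrees $0$ and $1$. Deformation theory of Higgs bundles identifies $\mathbb{H}^{0}(C^{\bullet})$ with the infinitesimal automorphisms of the pair, $\mathbb{H}^{1}(C^{\bullet})$ with the Zariski tangent space $T_{(\mathcal{E},\phi)}\mathcal{M}_{X}^{\mathcal{L}}(r,d)$, and $\mathbb{H}^{2}(C^{\bullet})$ with the obstruction space. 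Because $r$ and $d$ are coprime, stable $=$ semistable, so $\mathcal{M}_{X}^{\mathcal{L}}(r,d)$ is smooth and its dimension equals $\dim_{\mathbb{C}}\mathbb{H}^{1}(C^{\bullet})$; it therefore suffices to compute the three hypercohomology dimensions.

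First I would take Euler characteristics. Since $\mathrm{End}(\mathcal{E})$ has rank $r^{2}$ and degree $0$, Riemann--Roch on a genus-$g$ curve gives $\chi(\mathrm{End}\,\mathcal{E})=r^{2}(1-g)$ and $\chi(\mathrm{End}\,\mathcal{E}\otimes\mathcal{L})=r^{2}\deg\mathcal{L}+r^{2}(1-g)$, so that
\begin{equation*}
    \dim\mathbb{H}^{0}(C^{\bullet})-\dim\mathbb{H}^{1}(C^{\bullet})+\dim\mathbb{H}^{2}(C^{\bullet})=\chi(\mathrm{End}\,\mathcal{E})-\chi(\mathrm{End}\,\mathcal{E}\otimes\mathcal{L})=-\,r^{2}\deg\mathcal{L}.
\end{equation*}
Next, stability makes $(\mathcal{E},\phi)$ simple, so the only endomorphisms commuting with $\phi$ are scalars and $\mathbb{H}^{0}(C^{\bullet})=\ker\!\big(\mathrm{ad}_{\phi}\colon H^{0}(\mathrm{End}\,\mathcal{E})\to H^{0}(\mathrm{End}\,\mathcal{E}\otimes\mathcal{L})\big)\cong\mathbb{C}$.

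It remains to pin down $\mathbb{H}^{2}(C^{\bullet})$. Serre duality for hypercohomology on $X$, together with the self-duality $\mathrm{End}(\mathcal{E})\cong\mathrm{End}(\mathcal{E})^{\vee}$ supplied by the trace pairing, identifies $\mathbb{H}^{2}(C^{\bullet})^{\vee}$ with $\mathbb{H}^{0}$ of the twisted complex $C^{\bullet}\otimes(K\otimes\mathcal{L}^{*})$, namely with
\begin{equation*}
    \ker\!\Big(\mathrm{ad}_{\phi}\colon H^{0}\big(\mathrm{End}(\mathcal{E})\otimes K\otimes\mathcal{L}^{*}\big)\longrightarrow H^{0}\big(\mathrm{End}(\mathcal{E})\otimes K\big)\Big).
\end{equation*}
When $\mathcal{L}=K$ this is again the space of $\phi$-commuting endomorphisms, so $\mathbb{H}^{2}(C^{\bullet})\cong\mathbb{C}$; substituting $\dim\mathbb{H}^{0}=\dim\mathbb{H}^{2}=1$ and $\deg K=2g-2$ into the Euler-characteristic relation yields $\dim\mathbb{H}^{1}=r^{2}(2g-2)+2$. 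When $\deg\mathcal{L}>\deg K$, set $N:=K\otimes\mathcal{L}^{*}$, so $\deg N<0$, and I would show the displayed kernel vanishes. A nonzero $\psi\colon\mathcal{E}\to\mathcal{E}\otimes N$ with $[\phi,\psi]=0$ has a $\phi$-invariant kernel $\mathcal{K}$ of some rank $k<r$ and a $\phi$-invariant saturated image $\mathcal{F}\otimes N\subseteq\mathcal{E}\otimes N$ with $\mathrm{rk}\,\mathcal{F}=r-k$; combining the stability bounds $\deg\mathcal{K}<k\,\mu(\mathcal{E})$ and $\deg\mathcal{F}<(r-k)\,\mu(\mathcal{E})$ with $\deg(\mathcal{E}/\mathcal{K})\le\deg\mathcal{F}+(r-k)\deg N$ forces $(r-k)\deg N\ge 0$, contradicting $\deg N<0$ (and if $\psi$ is injective, $\mathcal{E}\hookrightarrow\mathcal{E}\otimes N$ already forces $r\deg N\ge 0$). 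Hence $\mathbb{H}^{2}(C^{\bullet})=0$ and the Euler-characteristic relation gives $\dim\mathbb{H}^{1}=r^{2}\deg\mathcal{L}+1$.

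The step I expect to absorb most of the care is this vanishing of $\mathbb{H}^{2}$ in the twisted case, which is essentially Nitsure's argument and genuinely uses $\deg\mathcal{L}>\deg K$; a related subtlety is that in the $\mathcal{L}=K$ case the nonvanishing $\mathbb{H}^{2}\cong\mathbb{C}$ does not obstruct smoothness, because the quadratic obstruction map $\mathrm{Sym}^{2}\mathbb{H}^{1}\to\mathbb{H}^{2}$ lands in the trace-free part (as $\mathrm{tr}[\psi_{1},\psi_{2}]=0$) while $\mathbb{H}^{2}$ is spanned by the class of $\mathrm{id}_{\mathcal{E}}$, so the obstruction vanishes and the moduli space is smooth of the expected dimension. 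As an independent check of both formulas I would run the Hitchin map: for $\mathcal{L}=K$ it is a Lagrangian fibration, so $\dim\mathcal{M}_{X}=2\dim\mathcal{B}=2\sum_{i=1}^{r}h^{0}(K^{i})=2\big(r^{2}(g-1)+1\big)=r^{2}(2g-2)+2$; for $\deg\mathcal{L}>\deg K$ one instead adds $\dim\mathcal{B}=\sum_{i=1}^{r}h^{0}(\mathcal{L}^{i})$ to the genus $g_{S}$ of a smooth spectral curve (computed by adjunction inside $\mathrm{Tot}(\mathcal{L})$), and $\dim\mathcal{B}+g_{S}=r^{2}\deg\mathcal{L}+1$; both are routine Riemann--Roch computations.
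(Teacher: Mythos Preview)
Your argument is correct and is essentially the original Hitchin--Nitsure computation: take the Euler characteristic of the two-term complex, identify $\mathbb{H}^{0}\cong\mathbb{C}$ by simplicity, and control $\mathbb{H}^{2}$ via Serre duality for hypercohomology, with the vanishing for $\deg\mathcal{L}>\deg K$ coming from the stability argument you sketch. The paper, however, takes a slightly different route. For the twisted case it works directly on the $E_{2}$-page of the hypercohomology spectral sequence for the double complex $(\delta,\wedge\phi)$, producing the short exact sequence $0\to E^{1,0}\to\mathbb{H}^{1}\to E^{0,1}\to 0$ and computing each term separately: $E^{0,1}=H^{1}(\mathrm{End}\,\mathcal{E})$ (using $h^{1}(\mathrm{End}\,\mathcal{E}\otimes\mathcal{L})=0$) and $E^{1,0}=H^{0}(\mathrm{End}\,\mathcal{E}\otimes\mathcal{L})/\mathrm{im}(\wedge\phi)$. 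For $\mathcal{L}=K$ the paper does not use hypercohomology at all but instead argues that $\mathcal{M}_{X}$ contains $T^{*}\mathcal{U}_{X}(r,d)$ as an open dense set and doubles $\dim\mathcal{U}_{X}(r,d)=r^{2}(g-1)+1$. Your approach is cleaner and more uniform, and your careful handling of the obstruction in the $\mathcal{L}=K$ case (the trace-zero remark) is a nice addition the paper does not make; the paper's approach, on the other hand, yields the explicit splitting of $\mathbb{H}^{1}$ into $E^{1,0}$ and $E^{0,1}$, which it immediately exploits to identify $\mathcal{M}_{X}^{\mathcal{L}}$ birationally with the bundle $\mathbb{L}\to\mathcal{U}_{X}(r,d)$ and, via the second double complex, to isolate the effective Hitchin base.
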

\noindent In a later subsection we will argue briefly why this is true for the $\mathcal{L} = K$ case, and we will prove the result for the $\deg\mathcal{L} > \deg K$ case in \emph{Section 4} when studying the deformation theory of the $\mathcal{L}$-twisted moduli space.

\subsubsection{Spectral correspondence}\label{SubsectionSpectralCorrespondence}
We want to understand now how $\mathcal{M}_{X}^{\mathcal{L}}$ looks.  We will do this by studying the fibres of the Hitchin map.  A result of Hitchin \cite{Hitchin87} and Nitsure \cite{Nitsure91} shows that the Hitchin map is proper, meaning that $\mathcal{M}_{X}^{\mathcal{L}}$ is fibred by compact subvarieties, called \emph{Hitchin fibres}, which can be shown to be tori.\\

Given a line bundle $\mathcal{L}$ on a Riemann surface $X$, the \emph{tautological section} $\eta$ is defined as follows.  Let $(x,y)$ be local coordinates on $\mathrm{Tot}(\mathcal{L})$, where $x$ is the base coordinate and $y$ is the fiber coordinate of $\mathrm{Tot}(\mathcal{L})$, with projection $\pi: \mathcal{L} \to X$.  The pullback bundle $\pi^{*}\mathcal{L}$ has attached to a point $(x,y(x))$ a copy of the fibre $\mathcal{L}_{x}$.  This bundle has a natural section, $\eta$, defined by $\eta(x,y(x)) = y(x) \in (\pi^{*}\mathcal{L})_{y} = \mathcal{L}_{x}$.

\begin{dfn}\label{DefHiggsSC}\label{DfnSpecCurve}
    Let $a = (a_{1},\dots,a_{r}) \in \mathcal{B}$.  The \textbf{spectral curve} $S_{a}$ associated to $a$ is the zero locus of the polynomial
    \begin{equation}\label{EqnSpectralCurve}
        \det(\eta - \pi^{*}\phi) = \eta^{r} + a_{1}\eta^{r-1} + \dots + a_{r}.
    \end{equation}
\end{dfn}

\begin{remark*}
    \emph{When $(\mathcal{E},\phi)$ is a Higgs bundle such that $\mathcal{H}([\mathcal{E},\phi]) = a$, then we will say that the spectral curve is associated to $(\mathcal{E}, \phi)$.}
\end{remark*}

It can be shown that the Hitchin fibres are precisely Jacobians of the respective spectral curves, by identifying eigenspaces of a given $\phi$ with holomorphic line bundles on the spectral curve.  In other words, the Hitchin fibration is globally a torus fibration and, in fact, is a completely integrable Hamiltonian system whose Poisson structure is induced from the open dense embedding of the cotangent bundle of the moduli space of bundles and whose Hamiltonians are the real and imaginary components of the Hitchin map \cite{Hitchin86}.  This integrable system has attracted immense interest in both mathematics and physics, as every classical integrable system is believed to be an instance, limit, or modification of the Hitchin system (\emph{cf.} for instance \cite{DonagiMarkman96}).  Rather than do justice to integrability in the framework of Higgs bundles, we will just argue that the fibres are in fact Jacobians of the respective spectral curves.\\

\begin{figure}[htp] 
    \centering
    \includegraphics[width=10cm]{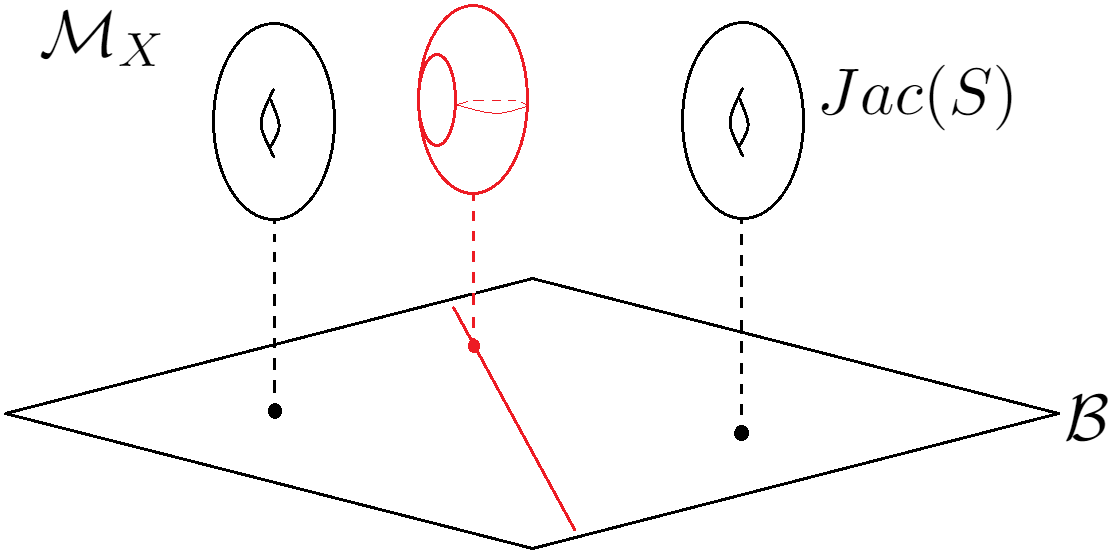}
    \caption{Generic fibres of $\mathcal{M}_{X}^{L}$ are tori.  There is a locus of degenerate singular tori.}
\end{figure}

For a generic choice of $a\in\mathcal{B}$, the spectral curve $S_{a}$ is an $r:1$ branched cover of $X$ living inside of $\mathrm{Tot}(\mathcal{L})$.  We can restrict the bundle projection map $\pi:\mathrm{Tot}(\mathcal{L})\to X$ to $S_{a}$ to get a map $\pi:S_{a}\to X$.  Let $\mathcal{Q}$ be a line bundle on $S_{a}$.  On this line bundle, the tautological section can be seen as acting by
\begin{align*}
    \eta|_{S_{a}}: & \mathcal{Q} \to \mathcal{Q}\otimes \pi^{*}\mathcal{L}\\
    & s \mapsto s\cdot y.
\end{align*}
The direct image $\mathcal{E} = \pi_{*}\mathcal{Q}$ is a rank $r$ vector bundle over $X$.  The tautological section pushes forward to a linear map $\mathcal{E}\to\mathcal{E}\otimes \mathcal{L}$, an $\mathcal{L}$-twisted Higgs field for $\mathcal{E}$.  Thus, from the data of a line bundle $\mathcal{Q}$ on $S_{a}$, we can produce an $\mathcal{L}$-twisted Higgs bundle on $X$.  To see this in the other direction, the Hitchin map sends a Higgs field $(\mathcal{E}, \phi)$ to an $r$-tuple of spectral data $a\in\mathcal{B}$. From this data, we can produce a spectral curve $S_{a}$, which, by Definition \ref{DefHiggsSC}, is the spectrum of the $\phi$, having $r$ distinct eigenvalues generically over $x\in X$ in correspondence to $S_{a}$ being a branched $r:1$ cover of $X$ (\emph{i.e.} branched points correspond to repeated eigenvalues).  The eigenspaces of $\phi$ are generically $1$-dimensional, and form a line bundle over $S_{a}$.\\

What we have shown thus far is the \emph{spectral correspondence} \cite{BEAUVILLEA1989SCAT, Donagi95, DonagiMarkman96, Hitchin87}: an isomorphism class of holomorphic line bundles $[\mathcal{Q}]$ on $S_{a}$ is equivalent to the data of an isomorphism class of Higgs bundles $[(\mathcal{E}, \phi)]$ on $X$.  It then follows that a generic fiber $\mathcal{H}^{-1}(a)$ is isomorphic to the Jacobian variety of the spectral curve $S_{a}$.  This is not the space of degree 0 line bundles, but rather the degree is given by

\begin{equation*}
    deg{\widetilde{\mathcal{L}}} = d - (1-g_{S_{a}}) + r(1-g).
\end{equation*}

\subsubsection{$\mathbb{C}^{*}$-action}\label{SubsectionC*Action}
Our description up to this point is not enough to fully understand the topology of the moduli space because of the presence of special degenerate fibers.  To continue the investigation, we appeal to the correspondence between the gauge-theoretic and algebro-geometric pictures.  Studying the gauge-theoretic moduli space, we could employ Morse-Bott theory on a symplectic leaf that contains the nilpotent cone (given that the whole of the moduli space is not naturally K\"ahler when $\mathcal L\ncong K$), and justifying that the (rational) cohomology of the leaf coincides with that of the whole moduli space.  Here, we must study the critical points of the height function $f(\mathcal{E}, \phi) = \frac{1}{2}||\phi||^{2}$, the $L^{2}$-norm on the moduli space.  On the algebro-geometric side, we can employ Bia\l{}ynicki-Birula theory \cite{Bialynicki-Birula73} and study the fixed points for the algebraic group action
\begin{align*}
    \lambda.(\mathcal{E},\phi) = (\mathcal{E}, \lambda\cdot\phi)
\end{align*}
of $\mathbb{C}^{*}$.  These two points of view come together by the following facts:
\begin{itemize}
    \item the fixed points of the $\mathbb{C}^{*}$-action are fixed points of $U(1)\subset\mathbb{C}^{*}$
    \begin{equation*}
        \lambda.(\mathcal{E},\phi) = (\mathcal{E}, e^{i\lambda}\cdot\phi)
    \end{equation*}
    \item the height function $f$ is a moment map for the $U(1)$-action, and the fixed points of the $U(1)$-action are critical points of $f$.
\end{itemize}
\noindent What is fortunate about the Bia\l ynicki-Birula approach is that there is no need to appeal to a K\"ahler structure.\\


A theorem of Frankel \cite{Frankel59} tells us that $f$ is a nondegenerate perfect Morse-Bott function, and so the Poinca\'re polynomial of $\mathcal{M}_{X}(r,d)$ is given by
\begin{equation}
    P[\mathcal{M}_{X}(r,d)](t) = \sum_{i\in I}t^{\beta(\mathcal{C}_{i})}P[\mathcal{C}_{i}](t),
\end{equation}
where $\mathcal{C}_{i}$ are the critical subvarieties of $f$, and $\beta(\mathcal{C}_{i})$ is the Morse index of $\mathcal{C}_{i}$, \emph{i.e.} the rank of the subbundle of the normal bundle on which the Hessian of $f$ is negative definite.  This tells us that the topology of the moduli space is contained in this fixed locus.\\

We now focus our attention on the set of fixed points of the $U(1)$-action, which we denote by $\mathcal{M}_{X}(r,d)^{U(1)}$.  Stable Higgs bundles are fixed \emph{iff} there exists an automorphism $A_{\lambda}$ of $\mathcal{E}$ such that
\begin{equation}\label{EqnFixedPointAuto}
    A_{\lambda}\phi A_{\lambda}^{-1} = e^{i\lambda}\phi
\end{equation}
for all $\lambda\in [0, 2\pi)$, \emph{i.e} there is a change of basis that undoes the $U(1)$-action.\\

We would like to develop a better description of the fixed points.  Let $(\mathcal{E}, \phi) \in \mathcal{M}_{X}(r,d)^{U(1)}$ with $A_{\lambda}$ the one-parameter family of automorphisms that satisfy \eqref{EqnFixedPointAuto}.  There is a limiting endomorphism $\Lambda$ that generates the family $A_{\lambda}$ infinitesimally,
\begin{equation*}
    \Lambda \coloneqq D_{\lambda}(A_{\lambda})|_{\lambda=0},
\end{equation*}
where $D_{\lambda}$ is a suitably-defined derivative.  This limiting endomorphism interacts with the Higgs field in the following way:

\begin{lem}\label{LemHoloChain}
    $[\Lambda, \phi] = i\phi$.
\end{lem}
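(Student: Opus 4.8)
The plan is to differentiate the fixed-point relation $A_\lambda\phi A_\lambda^{-1}=e^{i\lambda}\phi$ with respect to $\lambda$ at $\lambda=0$. First I would record the normalization $A_0=\mathrm{id}_{\mathcal E}$, which holds because at $\lambda=0$ the $U(1)$-action is trivial and the automorphism undoing it can be taken to be the identity; this is what makes $\Lambda=D_\lambda(A_\lambda)|_{\lambda=0}$ the correct infinitesimal generator. Rewriting the fixed-point equation in the form $A_\lambda\,\phi = e^{i\lambda}\,\phi\,A_\lambda$ (clearing the inverse, which is legitimate since each $A_\lambda$ is an automorphism), both sides become one-parameter families of bundle maps $\mathcal E\to\mathcal E\otimes\mathcal L$ depending smoothly on $\lambda$, so I may apply $D_\lambda|_{\lambda=0}$ termwise.

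The left-hand side differentiates to $\Lambda\phi$ (the Higgs field $\phi$ is independent of $\lambda$). For the right-hand side I would use the product rule: $D_\lambda\big(e^{i\lambda}\phi A_\lambda\big)\big|_{\lambda=0} = (i e^{i\lambda})|_{\lambda=0}\,\phi A_0 + e^{i\cdot 0}\,\phi\, D_\lambda(A_\lambda)|_{\lambda=0} = i\phi + \phi\Lambda$, using $A_0=\mathrm{id}$. Equating the two derivatives gives $\Lambda\phi = i\phi + \phi\Lambda$, i.e. $\Lambda\phi - \phi\Lambda = i\phi$, which is exactly $[\Lambda,\phi]=i\phi$.

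The only genuinely substantive point — the "main obstacle," such as it is — is justifying that the one-parameter family $A_\lambda$ can be chosen to depend smoothly (or at least differentiably) on $\lambda$ with $A_0=\mathrm{id}$, so that the derivative $\Lambda$ and the termwise differentiation above make sense; this is where one invokes that the $U(1)$-action on the (smooth, since $\gcd(r,d)=1$) moduli space is smooth and that the isotropy data varies smoothly, together with uniqueness of $A_\lambda$ up to the (finite, by stability) automorphisms of $(\mathcal E,\phi)$, which pins down the smooth choice through $\lambda=0$. Once the smooth family is in hand, everything else is the elementary differentiation above, and no further estimates or cohomological input are needed.
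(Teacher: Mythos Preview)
Your proposal is correct and follows essentially the same approach as the paper: differentiate the fixed-point relation at $\lambda=0$ and use $A_0=\mathrm{id}$. The only cosmetic difference is that you first clear the inverse by rewriting the relation as $A_\lambda\phi=e^{i\lambda}\phi A_\lambda$, whereas the paper differentiates $A_\lambda\phi A_\lambda^{-1}$ directly and computes $D_\lambda(A_\lambda^{-1})$ via the derivative-of-inverse formula; both arrive at $\Lambda\phi-\phi\Lambda=i\phi$ by the same elementary calculation.
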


\begin{proof}
    We start with \eqref{EqnFixedPointAuto} and apply $D_{\lambda}(\cdot)|_{\lambda=0}$ to both sides.\\
    On the right-hand side, we have
    \begin{equation*}
        D_{\lambda}(e^{i\lambda}\phi)|_{\lambda=0} = ie^{i\lambda}\phi|_{\lambda=0} = i\phi.
    \end{equation*}
    On the left-hand side, we have
    \begin{align*}
        D_{\lambda}(A_{\lambda}\phi A_{\lambda}^{-1}) &= D_{\lambda}(A_{\lambda})\phi A_{\lambda}^{-1}|_{\lambda=0} + A_{\lambda}\phi D_{\lambda}(A_{\lambda}^{-1})|_{\lambda=0}\\
        &= D_{\lambda}(A_{\lambda})\phi A_{\lambda}^{-1}|_{\lambda=0} + A_{\lambda}\phi (-1)D_{\lambda}(A_{\lambda})A_{\lambda}^{-2}|_{\lambda=0}\\
        &= \Lambda\phi - \phi\Lambda\\
        &= [\Lambda, \phi],
    \end{align*}
    where we use the definition of $\Lambda$, and the fact that $A_{0}$ is the identity map.
\end{proof}

Returning to the gauge-theoretic viewpoint, for a pair $(A, \phi)$ that satisfy the Hitchin equations, we have that the $\overline{\partial}$-operator induced by $A$, denoted $\overline{\partial}_{A}$, satisfies
\begin{equation*}
    \overline{\partial}_{A}\Lambda = 0.
\end{equation*}
In this way, we have that $\mathcal{E}$ decomposes into a direct sum of eigenspaces $\mathcal{B}_{1}, \dots, \mathcal{B}_{n}$ of $\Lambda$,
\begin{equation*}
    \mathcal{E} = \oplus_{k=1}^{n}\mathcal{B}_{k}.
\end{equation*}
These eigenspaces are in fact holomorphic subbundles of $\mathcal{E}$, and the corresponding eigenvalues $s_{1},\dots, s_{n}$ of $\Lambda$ are global holomorphic functions on $X$.  Applying both sides of Lemma \ref{LemHoloChain} to some $\mathcal{B}_{k}$, we find that
\begin{equation*}
    \Lambda(\phi\mathcal{B}_{k}) = (s_{k}+1)(\phi\mathcal{B}_{k}).
\end{equation*}
The image of $\mathcal{B}_{k}$ under $\phi$ is then a subbundle of the eigenbundle for eigenvalue $s_{k}+i$.  This means we can re-index (as needed) and group the eigenspaces into sequences with eigenvalues $s_{k}, s_{k}+i, s_{k}+2i, \dots$, terminating when the image of an eigenbundle under $\phi$ is zero (\emph{i.e.} when we have reached the last eigenbundle).  There cannot be multiple, disconnected sequences for a fixed point, as that would violate the stability condition.  This means that for a fixed point of the $U(1)$-action, $(\mathcal{E}, \phi)\in \mathcal{M}_{X}(r,d)^{U(1)}$, there is a number $n$ such that $\mathcal{E} = \oplus_{k=1}^{n}\mathcal{B}_{k}$, and
\begin{equation*}
    \mathcal{B}_{1} \xrightarrow{\phi_{1}} \mathcal{B}_{2}\otimes\mathcal{L} \xrightarrow{\phi_{2}} \dots \xrightarrow{\phi_{n-1}} \mathcal{B}_{n}\otimes\mathcal{L}^{\otimes n-1} \xrightarrow{\phi_{n}} 0,
\end{equation*}
where $\phi_{k} \coloneqq \phi|_{\mathcal{B}_{k}}$ is not identically zero for $k < n$.  A Higgs bundle satisfying this description is called a \emph{holomorphic chain}, \emph{cf.} \cite{AlvarezGarcia01, AlvarezGarciaSchmitt06, BradlowGarciaGothen04, GarciaHeinlothSchmitt14}.  In the case when $L = K$, these can be regarded as complex variations of Hodge structures \cite{Simpson90}.

With this description, we can write a fixed point in a basis of sections where $\phi$ looks like:
\begin{equation}\label{EqnFixedPointForm}
    \phi = \begin{pmatrix}
    0 & 0 & \cdots & 0 & 0\\
    \phi_{1} & 0 & \cdots & 0 & 0\\
    0 & \phi_{2} & \cdots & 0 & 0\\
     &  & \ddots &  & \\
    0 & 0 & \cdots & \phi_{n-1} & 0
    \end{pmatrix}.
\end{equation}
This local matrix description of $\phi$ is nilpotent, and since every fixed  point can be written in this form, they all live in the nilpotent cone (Definition \ref{DfnNilpotentCone}).\\

Notably, not all points within the nilpotent cone are fixed points of the action.  Only those that can be written in the form of \eqref{EqnFixedPointForm} are fixed.  Regardless, the topological information of $\mathcal{M}_{X}(r,d)$ is contained within the nilpotent cone.\\

We can also think of the fixed points of this action as $\mathcal{L}$-twisted representations of $A$-type quivers, with lengths and labels determined by partitions of $r$ and $d$:
\begin{equation*}
    \bullet_{r_{1},d_{1}}\to \bullet_{r_{2}, d_{2}}\to \cdots \to \bullet_{r_{n}, d_{n}}.
\end{equation*}
Such a representation is called a \emph{quiver bundle}, \emph{cf.} \cite{Gothen95, GothenKing05, Rayan16, RayanSundbo18, Schmitt05}.


\subsubsection{$\mathcal{L} = K$ moduli space}
We wish to restrict now to the $\mathcal{L} = K$ case and take an opportunity to look specifically at the ordinary Higgs bundle moduli space, $\mathcal{M}_{X}(r,d)$.  We will start by looking at the Hitchin base $\mathcal{B}$, aiming to show that
\begin{equation}
	\dim\mathcal{B}=r^2(g-1)+1.
\end{equation}

We begin by applying Serre duality to the $h^1(K^n)$ terms.
\begin{align*}
    h^1(K^n) &= h^0(K\otimes(K^n)^*)\\
    &= h^0(K\otimes(K^*)^n)\\
    &= h^0((K^*)^{n-1})\\
    &= 0
\end{align*}
Because $\deg(K^{n}) = (2g-2)n$, we know that $(K^*)^{n-1}$ has degree $(2-2g)(n-1) < 0$, meaning that $h^1(K^n)$ vanishes.\\

Applying the Riemann-Roch theorem for $n>1$:
\begin{align*}
    h^{0}(K^{n}) &= \deg K^n + rk K^n(1-g)\\
    &= (2g-2)n + (1-g)\\
    &= (2n)g - 2n + 1 - g\\
    &= (2n-1)g - (2n-1)\\
    &= (g-1)(2n-1).
\end{align*}

Putting it all together we get that
\begin{align*}
    h^0(K^n) &= 
    \begin{dcases}
        g & n = 1 \\
        (g-1)(2n-1) & n\geq 2 \\
    \end{dcases}
\end{align*}

Now that we know the dimension of each homology group, we can sum them together to get the dimension of the Hitchin base.

\begin{align*}
    \dim\mathcal{B} &= \sum_{i=1}^{r}h^0(K^i)\\
    &= \sum_{i=2}^{r}h^0(K^i) + h^0(K)\\
    &= \sum_{i=2}^{r}(g-1)(2i-1) + g\\
    &= (g-1)\sum_{i=2}^{r}(2i-1) + g\\
    \\
    &= (g-1)\left(2\sum_{i=2}^{r}i - \sum_{i=2}^{r}1\right) + g\\
    \\
    &= (g-1)\left(2(\frac{r(r+1)}{2}-1) - r + 1\right) + g\\
    \\
    &= (g-1)(r^2+r-2-r+1) + g\\
    &= (g-1)(r^2-1) + g\\
    &= r^2(g-1) - (g-1) + g\\
    &= r^2(g-1) + 1.
\end{align*}

\vspace{10pt}

Using the deformation theory of sheaves, we can argue that the tangent space to the moduli space of stable bundles at any point $\mathcal{E}$ is isomorphic to the cohomology $H^{1}(X,\mbox{End}(\mathcal{E}))$, which by Riemann-Roch also has dimension $r^{2}(g-1) + 1$.  By Serre duality, the cotangent space is, therefore, $H^{0}(X,\mbox{End}(\mathcal{E})\otimes K)$, which is the space of possible Higgs fields for $\mathcal{E}$.  In other words, $\mathcal{M}_{X}(r,d)$ contains the cotangent bundle to the moduli space of stable bundles.  This containment is open dense, and so the dimension of $\mathcal{M}_{X}(r,d)$ is $2r^{2}(g-1) + 2$.\\

From this, it turns out that the dimension of $\mathcal{M}_{X}(r,d)$ is twice the dimension of both the Hitchin base and the moduli space of stable bundles.  These two different fibrations are related by what is called ``hyperk\"ahler rotation''.

\begin{ex}\label{ExHitchinSection1}
\emph{Returning to the example $\mathcal{E} = K\oplus\mathcal{O}$ with Higgs field given globally by the matrix
\begin{equation}
     \phi = 
    \begin{bmatrix}
    0 & \alpha\\
    1 & 0
    \end{bmatrix},
\end{equation}
which acts by multiplication on sections of $\mathcal{E}$.  The Hitchin map here sends $\phi$ to $\eta^{2} - \alpha\in H^{0}(X,K^{2})$.  These Higgs bundles form the \emph{Hitchin section}, a locus of Higgs bundles that intersects the Hitchin fibres at exactly one point each, as for every element of $\mathcal{B}$ there is only one Higgs field with the above form.}
\end{ex}

\begin{remark}\label{ExHitchinSection2}
\emph{It is possible to reformulate Example \ref{ExHitchinSection1} with a different Higgs bundle, $\mathcal{E} = K^{\frac{1}{2}}\oplus K^{-\frac{1}{2}}$ with Higgs field
\begin{equation}
     \phi = 
    \begin{bmatrix}
    0 & \alpha\\
    1 & 0
    \end{bmatrix},
\end{equation}
where $K^{\frac{1}{2}}$ is a choice of holomorphic square root of $K$  (there are $2^{2g}$ such choices).  The Higgs field has similar properties to the example above, in particular $\eta - \det\phi \in H^{0}(X,K^{2})$.  The main difference between these examples is the degree of the Higgs bundle. The degree of $\mathcal{E} = K\oplus\mathcal{O}$ is $2g - 2$, while the degree of $\mathcal{E} = K^{\frac{1}{2}}\oplus K^{-\frac{1}{2}}$ is $0$.}
\end{remark}

\subsubsection{$SL(r,\mathbb{C})$-Higgs bundles}
Until now, we have been considering the situation where the structure group for $\mathcal{E}$ is $GL(r,\mathbb{C})$.  There is often a preference to fix the determinant of the Higgs bundle, with the convention being to either let $\det\mathcal{E}=\mathcal{P}$ for some fixed line bundle $\mathcal{P}$, or $\det\mathcal{E} = \mathcal{O}_{X}$ specifically.  For our purposes, we will consider the latter.  Fixing the determinant takes us from the $GL(r,\mathbb{C})$ setting to the $SL(r,\mathbb{C})$ setting.

\begin{dfn}
    Let $\mathcal{L}$ be a holomorphic line bundle on a Riemann surface $X$. An \textbf{$SL(r,\mathbb{C})$-Higgs bundle} on $X$ is a pair $(\mathcal{E},\phi)$, where $\mathcal{E}$ is a holomorphic vector bundle with $\det\mathcal{E} = \mathcal{O}_{X}$,  and $\phi: \mathcal{E}\rightarrow \mathcal{E}\otimes \mathcal L$ such that $\emph{tr}(\phi) = 0$.
\end{dfn}

The moduli space of $SL(r,\mathbb{C})$-Higgs bundles $\mathcal{M}_{X}^{\mathcal{L},0}$ is a subvariety of $\mathcal{M}_{X}^{\mathcal{L}}$.  The restricted Hitchin map on $\mathcal{M}_{X}^{\mathcal{L},0}(r,d)$ has codomain $\mathcal{B}^{0} \coloneqq \bigoplus_{i=2}^{r} H^{0}(X,\mathcal{L}^{i})$ 
 since $\text{tr}\,\phi$ belongs to $H^0(X,\mathcal{L})$. In particular, the nilpotent cones of $\mathcal{M}_{X}^{\mathcal{L},0}$ and $\mathcal{M}_{X}^{\mathcal{L}}$ are coincident, as Higgs bundles in the nilpotent cone have, by definition, trace-free Higgs fields even in the $GL(r,\mathbb C)$ case.  The dimension of $\mathcal{M}_{X}^{\mathcal{L},0}$ is $\deg\mathcal{L}(r^2 - 1)$.


\section{Topological recursion}


\subsection{The prime form and fundamental differentials}
In order to define and study topological recursion in later sections, we will need to make use of certain fundamental differentials on a Riemann surface and their properties.  Such objects are studied in depth in \cite{Fay73, Mumford07}.\\

Let $X$ be a Riemann surface of genus $g$.  Choose a symplectic basis $\langle A_{1},...,A_{g},B_{1},...,B_{g} \rangle$ for $H_{1}(X,\mathbb{Z})$\footnote{\cite{BouchardEynard17} call a Riemann surface together with such a symplectic basis a \textbf{Torelli surface}.}.  Let $v_{1},\dots,v_{g}$ be a basis of holomorphic differentials, normalized by
\begin{equation}
    \int_{A_{j}}v_{i} = \delta_{ij}.
\end{equation}
With respect to the symplectic basis, the period matrix $\tau$ of $X$ is given by
\begin{equation}
        \tau_{ij} = \int_{B_{j}}v_{i}.
\end{equation}
We can identify the Jacobian of $X$ as $Jac(X) = Pic^{0}(X)$, which is isomorphic to $Pic^{g-1}(X)$.  The \emph{theta divisor} $\Theta$ of $Pic^{g-1}(X)$ is defined by
\begin{equation*}
    \Theta = \{ \mathcal{L}\in Pic^{g-1}(X) | \dim H^{1}(X,\mathcal{L}) > 0 \}.
\end{equation*}
Consider the diagram
\begin{center}
        \begin{tikzpicture}
            \matrix (m) [matrix of math nodes, row sep=2em, column sep=3em]
            { & Jac(X) & \\
            & X\times X &\\
            X & & X\\};
            \path[-stealth]
            (m-2-2) edge node [right]{$\delta$}  (m-1-2)
            (m-2-2) edge node [above] {$\pi_{1}$} (m-3-1)
            (m-2-2) edge node [above] {$\pi_{2}$} (m-3-3);
        \end{tikzpicture}
\end{center}
where $\pi_{j}$ denotes projection onto the $j$th component, and
\begin{equation*}
    \delta: X\times X\ni (p,q) \longmapsto p-q \in Jac(X).
\end{equation*}

\begin{dfn}\label{DfnPrimeForm}
    The \textbf{prime form} $E(z_{1}, z_{2})$ is defined as a holomorphic section
    \begin{equation*}
        E(p,q)\in H^{0}\left( X\times X, \pi_{1}(K)^{-\frac{1}{2}}\otimes \pi_{2}(K)^{-\frac{1}{2}}\otimes \delta^{*}(\Theta)\right)
    \end{equation*}
    where we choose Riemann's spin structure (or the Szeg\"o kernel) $K^{\frac{1}{2}}$, which has a unique global section up to the constant multiplication (\cite{Fay73} Theorem 1.1).
\end{dfn}

The prime form satisfies the following properties:
\begin{itemize}
    \item $E(p,q)$ vanishes to first order along the diagonal $\Delta\in X\times X$, and is otherwise nonzero.
    \item $E(p,q) = - E(q,p)$.
    \item Let $z$ be a local coordinate on $X$.  This means that $dz(p)$ gives a local trivialization of $K$ around $p$.  At a point $q$ near to $p$, $\delta^{*}(\Theta)$ is also trivialized around $(p,q)\in X\times X$ with local expression
    \begin{equation}
        E(z(p), z(q)) = \frac{z(p)-z(q)}{\sqrt{dz(p)}\cdot \sqrt{dz(q)}}\left(1 + O((z(p) - z(q))^{2})\right).
    \end{equation}
\end{itemize}

\begin{dfn}\label{DfnBergmanKernel}
The \textbf{fundamental normalized differential of the second kind} (or \textbf{Bergman kernel}) \begin{equation}
    B(z_{1},z_{2}) = d_{1}d_{2}\log E(z_{1},z_{2}) \in H^{0}(X\times X, \pi_{1}^{*}(K)\otimes\pi_{2}^{*}(K)\otimes\mathcal{O}(2\Delta))
\end{equation}
is the unique bi-linear meromorphic differential on $X\times X$ that satisfies the following:
\begin{itemize}
    \item It is symmetric: $B(z_{1},z_{2}) = B(z_{2},z_{1})$.
    \item It is holomorphic everywhere except for a double pole at $z_{1} = z_{2}$ and can be expanded in a neighbourhood of the diagonal as
        \begin{equation}
            B(z_{1},z_{2}) = \frac{dz_1dz_2}{(z_1-z_2)^2} + O(1)dz_1dz_2.
        \end{equation}
    \item It is normalized on $A$-cycles:
        \begin{equation}
            \oint_{z_{1}\in A_{i}} B(z_{1},z_{2}) = 0
        \end{equation}
        for $i = 1,...,g$.
    \item The integrals along $B$-cycles are given by:
    \begin{equation}
        \oint_{z_{1}\in B_{j}}B(z_{1},z_{2}) = 2\pi iv_{j}(z_{2})
    \end{equation}
    for $i = 1,...,g$.
\end{itemize}
\end{dfn}

\begin{dfn}\label{DfnNormalizedCauchyKernel}
    The \textbf{normalized differential of the third kind} (or \textbf{normalized Cauchy kernel})\\
    \begin{equation}
        \omega^{a-b}(z) = d_{z}\log\frac{E(a,z)}{E(b,z)} = \int_{b}^{a}B(t,z)
    \end{equation}
    is the unique meromorphic differential on $X$ satisfying;
    \begin{itemize}
        \item It is holomorphic except for $z=a$ and $z=b$.
        \item It is normalized on $A$-cycles:
        \begin{equation}
            \oint_{z\in A_{i}} \omega(z) = 0
        \end{equation}
        for $i = 1,...,g$.
        \item It has a simple pole at $z=a$ with residue $1$ and at $z=b$ with residue $-1$.
    \end{itemize}
\end{dfn}


\subsection{Matrix models}

Topological recursion was developed in the context of matrix models and random matrix theory.  A main concern in random matrix theory is the statistical behaviour of the spectrum of a matrix, particularly in a large $N$ limit.  For most ``nice'' cases, the density of eigenvalues converges to a continuous density function, referred to as the \emph{equilibrium measure}.  This equilibrium measure has compact support and takes the form of a complex algebraic function.  This means that to a sufficiently ``nice'' random matrix model, we can associate an algebraic curve $S$.\\

Two classic examples that demonstrate this are the Wigner semi-circle distribution, which is the equilibrium measure for eigenvalues of a Gaussian random matrix given by
\begin{equation}
    \rho(x)dx = \frac{1}{2\pi}\sqrt{4-x^{2}}\chi_{[-2,2]}dx,
\end{equation}
and the Marchenko-Pastur distribution, which describes the asymptotic behaviour of $M\times N$ Gaussian random matrices given by
\begin{equation}
    \rho(x)dx = \frac{N}{2\pi M\sigma^{2}} \frac{\sqrt{\frac{M}{N}\sigma^{4} - \frac{M^{2}}{N^{2}}\sigma^{4} - (x-\sigma^{2})^{2}}}{x} \chi_{[a,b]}dx,
\end{equation}
where $\sigma^{2}$ is the variance, and $a,b$ are the roots of term in the square-root.\\

It is known (\emph{cf.} for instance \cite{Eynard2014}) that understanding the algebraic curve $S$ associated to an equilibrium measure is enough to understand the asymptotic expansion of all expectation values to all orders.  In the large $N$ limit, the expectation value of a multi-resolvent, the joint probability of $n$-eigenvalues, can be expanded as
\begin{equation*}
    \left\langle \left(\text{Tr} \frac{1}{x_{1} - M}\right) \dots \left(\text{Tr} \frac{1}{x_{n} - M}\right) \right\rangle \sim \sum_{g=0}^{\infty} N^{2-2g-n} W_{g,n}(x_{1},...,x_{n}).
\end{equation*}
Understanding the $W_{g,n}$ is what is necessary to compute all correlation functions of the matrix model.  The $W_{g,n}$ are differential forms defined recursively on $2 - 2g - n$, satisfying a relation
\begin{equation*}
    W_{g,n+1} \sim W_{g-1,n+2} + \sum_{\substack{g_1+g_2=g \\ |I\cup J| = n}} W_{g_1, |I|+1}W_{g_2, |J|+1},
\end{equation*}
and depend only on the information of $S$.


\subsection{Eynard-Orantin differentials}

A natural question to ask is ``what happens when we compute these $W_{g,n}$ for a \emph{spectral curve}, an algebraic curve arising as the spectrum of an arbitrary matrix-valued function?''  This defines a family of invariants $\{ W_{g,n} \}$ of the curve called the \emph{Eynard-Orantin differentials}.


\begin{dfn}\label{DfnTRSpectralCurve}
    A \textbf{spectral curve} is a triple $(S,x,y)$ where $S$ is a compact Riemann surface, and $x,y:S\to \mathbb{P}^{1}$.
\end{dfn}

We can view a spectral curve $S$ as an $r:1$ cover of $\mathbb{P}^{1}$ with covering map $x:S \rightarrow \mathbb{P}^{1}$.  Viewing $x$ as the local coordinate on $S$ and $y$ as the fiber coordinate of $T^{*}\mathbb{P}^{1}$, the spectral curve is defined by
\begin{equation}\label{EqnTRSpectralCurve}
    \{ (x,y) : P(x,y) = y^r + \sum_{i=1}^{r}f_{i}(x)y^{r-i} = 0\}.
\end{equation}

\begin{remark*}
    \emph{If we have a rank $r$ Higgs bundle $(\mathcal{E},\phi)$ on $\mathbb{P}^{1}$, we can think of \eqref{EqnTRSpectralCurve} as a local expression of \eqref{EqnSpectralCurve}}
\end{remark*}

In this section, we are interested in spectral curves of the form
\begin{equation}
    P(x,y) = y^2 - f(x) = 0,
\end{equation}
and we consider an affine coordinate $z$ around $p$ on $S$.  The topological recursion lives on

\begin{center}
\begin{tikzpicture}
    \matrix (m) [matrix of math nodes, row sep=3em, column sep=1em]
    { K_{S} &   & \\
      S &  & T^{*}\mathbb{P}^{1} \\
      & \mathbb{P}^{1} & \\ };
    \path[-stealth]
    (m-1-1) edge node {}  (m-2-1)
    (m-2-1) edge node {}  (m-2-3)
            edge node [left] {$x$} (m-3-2)
    (m-2-3) edge node [right] {$\pi$} (m-3-2);
\end{tikzpicture}
\end{center}
where $\pi|_{S} = x$.\\

We will impose an additional condition on the ramification points of our spectral curve.  While it is possible to be general, we will only be interested in this specific case.

\begin{dfn} \label{DfnGoodSpec}
    A \textbf{good spectral curve} over $\mathbb{P}^{1}$ is a spectral curve as above, with only simple ramification points and the additional condition that zeroes of $dx$ and $dy$ do not coincide.
\end{dfn}

For the remainder of this section, we will be exclusively interested in good spectral curves, and will refer to them as spectral curves.\\

The ramification divisor $R$ of a spectral curve consists of points where $f(x)$ has zeroes or poles.  For the spectral curves that we are considering, these are points where zeroes of $dx$ or poles of $x$ are order 2 or greater. Around each ramification point $p\in R$, there is a local involution $\sigma_{p}$ that fixes the ramification point and exchanges nearby points between the sheets (see Figure \ref{FigSpectral}).\\

\begin{figure}[htp] 
    \centering
    \includegraphics[width=10cm]{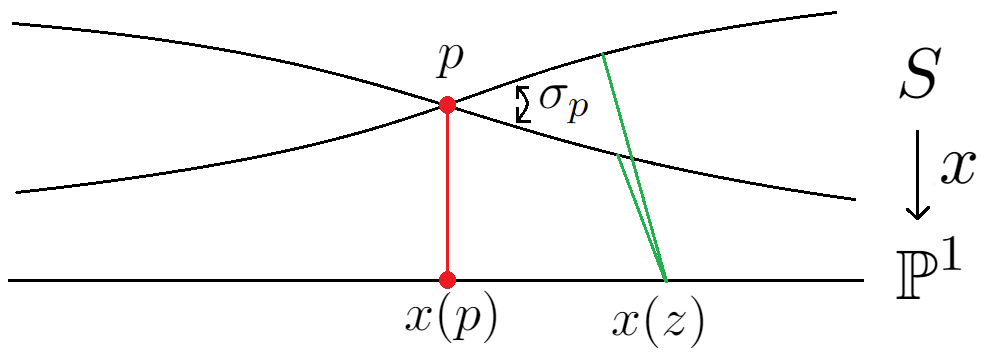}
    \caption{Local picture of a spectral curve $S$ on $\mathbb{P}^{1}$.}
    \label{FigSpectral}
\end{figure}

The Bergman kernel (Definition \ref{DfnBergmanKernel}) plays an important role in the definition of the Eynard-Orantin invariants.  As such, we need to choose a symplectic basis $\langle A_{1},...,A_{\tilde{g}},B_{1},...,B_{\tilde{g}} \rangle$ for $H_{1}(X,\mathbb{Z})$.

\begin{dfn}
Let $p\in R$.  The \textbf{recursion kernel} at $p$ is a meromorphic section of $K_{S}\boxtimes K_{S}^{*}$ defined by
    \begin{equation}\label{EqnRecursionKernel}
        K_{p}(z_0,z) = \frac{\int_{t=\alpha}^{z}B(t,z_0)}{(y(z)-y(\sigma_{p}(z))dx(z)}
    \end{equation}
where $\alpha$ is an arbitrary base point, and $B$ is the Bergman kernel.
\end{dfn}

\begin{remark*}
\emph{The term $\frac{1}{dx(z)}$ is acting as a contraction operation with the vector field $(\frac{dx}{dz})^{-1}\frac{\partial}{\partial z}$.  In this way the ``division" acts by killing terms of the form $dx(z)$ in the numerator.}
\end{remark*}

\begin{dfn}\label{DfnOGTR}
The \textbf{Eynard-Orantin differentials} $W_{g,n}$ are meromorphic sections of the $n$-th exterior tensor product $K_{S}^{\boxtimes n}$, i.e. multi-differentials, defined as follows:\\

The initial conditions of the recursion are given by:
\begin{itemize}
    \item $W_{0,1}$ is a meromorphic $1$-form on $S$.\footnote{The choice of $W_{0,1}$ is related to the geometric problem that one is studying.  In the original formulation \cite{EynardOrantin07b}, $W_{0,1}$ is taken to be $0$.  In many other examples, including the Airy spectral curve, it is taken to be $ydx$.}
    \item $W_{0,2}(z_1,z_2) = B(z_{1},z_{2})$\label{EqnW02}.
\end{itemize}

For all $g,n\in \N$ and $2g - 2 + n \geq 0$, define $W_{g,n}$ recursively by
\begin{align}
    W_{g,n+1}(z_0, \textbf{z}) = \sum_{p\in R}&\mathrm{Res}_{z = p}K_{p}(z_0,z)\Big[ W_{g-1,n+2}(z, \sigma_{p}(z),\textbf{z})\\
    \nonumber &+\sum_{\substack{g_1+g_2=g \\ I\cup J = \textbf{z}}}^{'}W_{g_1, |I|+1}(z,I)W_{g_2, |J|+1}(\sigma_{p}(z), J)\Big] \label{EqnTR}
\end{align}
where the prime signifies summation excluding the cases $(g_1,I)$ or $(g_2,J) = (0,0)$.  

\noindent The terms with $k = 2g +n -1 \geq 2$ are called \textbf{stable differentials}.
\end{dfn}

\vspace{10pt}

To illustrate the general form of the Eynard-Orantin differentials, we compute the first few.\\

$k=1:$
\begin{equation}
    W_{0,2}(z_1,z_2) = B(z_{1},z_{2})
\end{equation}

$k=2:$
\begin{align}
    W_{0,3}(z_{0},z_{1},z_{2}) &= \sum_{p\in R}\mathrm{Res}_{z = p}K_{p}(z_0,z)\lbrack W_{0,2}(z,z_1)W_{0,2}(\sigma_{p}(z),z_2)\\
    \nonumber &\ \ \ \ \ \ \ \ \ \ \ + W_{0,2}(\sigma_{p}(z),z_1)W_{0,2}(z,z_2)\rbrack\\
    \nonumber\\ 
    W_{1,1}(z_{0}) &= \sum_{p\in R}\mathrm{Res}_{z = p}K_{p}(z_0,z)W_{0,2}(\sigma_{p}(z),z)
\end{align}

$k=3:$
\begin{align}
    W_{0,4}(z_{0},z_1,z_2,z_3) &= \sum_{p\in R}\mathrm{Res}_{z = p}K_{p}(z_0,z)\times\\
    \nonumber     &\lbrack W_{0,3}(z,z_1,z_2)W_{0,2}(\sigma_{p}(z),z_3) + W_{0,3}(\sigma_{p}(z),z_1,z_2)W_{0,2}(z,z_3)\\
    \nonumber     &W_{0,3}(z,z_2,z_3)W_{0,2}(\sigma_{p}(z),z_1) + W_{0,3}(\sigma_{p}(z),z_2,z_3)W_{0,2}(z,z_1)\\
    \nonumber&W_{0,3}(z,z_1,z_3)W_{0,2}(\sigma_{p}(z),z_2) + W_{0,3}(\sigma_{p}(z),z_1,z_3)W_{0,2}(z,z_2))\rbrack
\end{align}
    
\begin{align}
    W_{1,2}(z_{0},z_{1}) = \sum_{p\in R}&\mathrm{Res}_{z = p}K_{p}(z_0,z)\lbrack W_{0,3}(z,\sigma_{p}(z),z_1) \\
    \nonumber &+ W_{1,1}(z)W_{0,2}(\sigma_{p}(z),z_1) + W_{1,1}(\sigma_{p}(z))W_{0,2}(z,z_1)\rbrack   
\end{align}

\vspace{10pt}

The $W_{g,n}$'s satisfy three properties that will be used in future computations.
\begin{enumerate}
    \item \textbf{Symmetry}
    
    For any $\sigma_{ij} \in S_{2}$ (a map that swaps $i$ and $j$), the differentials satisfy:
    \begin{equation}
        W_{g,n}(\sigma_{ij}(\textbf{z})) = W_{g,n}(\textbf{z})
    \end{equation}
    
    \item \textbf{Location of poles}
    
    If a stable differential has a pole then it must be at a ramification point.
    
    \item \textbf{Stable differentials are odd}
    
    The differentials $W_{g,n}$ satisfy
    \begin{equation}
        W_{g,n}(z_{1},...,z_{n}) + W_{g,n}(-z_{1},...,z_{n}) = 0
    \end{equation}
    
    for $(g,n) \neq (0,2)$.  By symmetry, this property extends to all arguments.
    
    The differential $W_{0,2}$ satisfies a different property,
    \begin{equation}
        W_{0,2}(z_{1},z_{2}) + W_{0,2}(-z_{1},z_{2}) = \frac{dx(z_{1})dx(z_{2})}{(x(z_{1}) - x(z_{2}))^{2}}. \label{EqnW02}
    \end{equation}
\end{enumerate}

\subsection{Topological recursion for Hitchin spectral curves}

The topological recursion developed above is a local formulation.  This was adapted to a recursion to a global spectral curves, such as a Hitchin spectral curves, by Dumitrescu-Mulase (c.f.\cite{DumitrescuMulase14, DumitrescuMulase14b, DumitrescuMulase17, DumitrescuMulase18}).  If we want to use a Hitchin spectral curve (or any globally defined spectral curve that satisfies the necessary conditions) as a component for the recursion, we need to replace all components of the recursion with global objects.\\

Let $\tilde{\pi}:\tilde{S}\to X$ be a non-singular cover of a Riemann surface $X$ (not necessarily arising from a Higgs bundle) with simple ramification points.  Denote by $R$, the ramification divisor of $\pi$.  The cover is a Galois covering with Galois group $\mathbb{Z}/2\mathbb{Z} = \langle \widetilde{\sigma} \rangle$, whose fixed point divisor is $R$.  Choose a spin structure on $S$ such that
\begin{equation}
    \dim H^{0}(\tilde{S}, K_{\tilde{S}}^{\frac{1}{2}}) = 1,
\end{equation}
and a symplectic basis $\langle A_{1},...,A_{\tilde{g}},B_{1},...,B_{\tilde{g}} \rangle$ for $H_{1}(\tilde{S},\mathbb{Z})$.

\begin{dfn}
    The \textbf{Eynard-Orantin differentials} $W_{g,n}$ are meromorphic sections of the $n$-th exterior tensor product $K^{\boxtimes n}$ defined as follows:
    \begin{itemize}
        \item $W_{0,1}$ is a meromorphic $1$-form on $\tilde{S}$ prescribed according to the geometric setting.
        \item $W_{0,2}$ is given by
        \begin{equation}
            W_{0,2}(z_{1},z_{2}) = B(z_{1},z_{2}).
        \end{equation}
    \end{itemize}
    For all $g,n\in \N$ and $2g - 2 + n \geq 0$, define $W_{g,n}$ recursively by
    \begin{align}\label{EqnHitchinTRFormula}
        W_{g,n+1}(z_0, \textbf{z}) = \sum_{p\in R}&\mathrm{Res}_{z = p}\frac{\omega^{z-\widetilde{\sigma}(z)}(z_{1})}{\Omega(z)}\Big[ W_{g-1,n+2}(z, \sigma_{p}(z),\textbf{z})\\
        \nonumber&+\sum_{\substack{g_1+g_2=g \\ I\cup J = \textbf{z}}}^{'}W_{g_1, |I|+1}(z,I)W_{g_2, |J|+1}(\sigma_{p}(z), J)\Big]
    \end{align}
    where $\omega$ is the normalized Cauchy kernel (Definition \ref{DfnNormalizedCauchyKernel}), $\Omega = W_{0,1} - \sigma^{*}W_{0,1}$, and the prime signifies summation excluding the cases $(g_1,I)$ or $(g_2,J) = (0,0)$.
\end{dfn}

\begin{remark}
    \emph{As one might expect, this definition is remarkably similar to Definition \ref{DfnOGTR}.  The major difference is that the components going into the definition, such as the spectral curve and the recursion kernel, have been defined in a coordinate-independent global manner.  Nevertheless, it is important to remark that the recursion is inherently a local procedure, as it is built around the data of residues at the ramification points of the spectral curve.  On local coordinate charts, where we can express $S$ as the zero locus of a polynomial, this definition coincides with ordinary Eynard-Orantin topological recursion.}
\end{remark}

\begin{remark}
    \emph{When we have a non-singular Hitchin spectral curve $S$, we let $\tilde{S} = S$, $\tilde{\sigma} = \sigma$, the involution of $T^{*}X$ is defined by fiberwise multiplication by $-1$, and $W_{0,1}=\eta$.}
\end{remark}


\section{Geometry of twisted Hitchin spectral curves}

\subsection{Insights on an $\mathcal{L}$-twisted topological recursion}

The Dumitrescu-Mulase picture outlined at the end \emph{Section 3} fits into the larger scope of $\mathcal{L}$-twisted Higgs bundles, which are adaptable to all Riemann surfaces --- in fact, all complex manifolds --- without having to introduce punctures or special divisors.  Notably, Dumitrescu-Mulase circumvent the meromorphic data when discussing topological recursion by defining the recursion on a non-singular model for the Hitchin spectral curve.  To our knowledge, twisted Higgs bundles have not been explored in the context of topological recursion.  Our interest in them is due to the potential they have for formulating a more invariant version of topological recursion and revealing intricacies that might otherwise not be clear in the already explored cases.\\

To this end, let $(\mathcal{E},\phi)$ be an $\mathcal{L}$-twisted Higgs bundle on a Riemann surface $X$.  The characteristic polynomial of $\phi$ gives rise to a spectral curve $S\subset Tot(\mathcal{L})$ with equation
\begin{equation}
    \eta^{\otimes r} + \sum_{i=1}^{r}p_{i}\eta^{\otimes(r-i)} = 0.
\end{equation}
We can proceed in a na\"ive way by reframing topological recursion for Hitchin spectral curves in the $\mathcal{L}$-twisted setting by replacing the appropriate objects with their $\mathcal{L}$-twisted versions. This $\mathcal{L}$-twisted recursion will take place in the diagram
\begin{center}
\begin{tikzpicture}
    \matrix (m) [matrix of math nodes, row sep=3em, column sep=1em]
    { K_{S} &   & \\
      S &  & Tot(\mathcal{L}) \\
      & X & \\ };
    \path[-stealth]
    (m-1-1) edge node {} (m-2-1)
    (m-2-1) edge   (m-2-3)
            edge node [left] {$\pi|_{S}$} (m-3-2)
    (m-2-3) edge node [right] {$\pi$} (m-3-2);
\end{tikzpicture}
\end{center}
with a Galois involution $\sigma$ of $S$, and ramification divisor $R$ of $\pi$.\\

As the Bergman kernel $B$ depends only on the Riemann surface $S$, we choose $W_{0,2} = B$.  In the $\mathcal{L}$-twisted setting, the recursion kernel at $p\in R$,
\begin{equation}
    K_{p} = \frac{\omega^{z-\sigma(z)}}{(\eta - \sigma^{*}\eta)}
\end{equation}
is a section of $(\pi^{*}\mathcal{L}|_{S})^{*}\otimes K_{S}$.  Using the recursion formula \eqref{EqnHitchinTRFormula} gives rise to \emph{$\mathcal{L}$-twisted Eynard-Orantin differentials},
\begin{equation}
    W_{g,n}^{\mathcal L} \in \Gamma(((\pi^{*}\mathcal{L}|_{S})^{*}\otimes K_{S})^{\boxtimes 2g+n-2}\otimes K_{S}^{n}).
\end{equation}
A notable observation made apparent in the twisted case is the $(\pi^{*}\mathcal{L}|_{S})^{*}\otimes K_{S}$ part of the differentials coming from the recursion kernel.  From this observation, it is clear that the recursion kernel introduces the geometry of $\textrm{Tot}(\mathcal{L})$ into the recursion.  In the original formulation of topological recursion, the recursion kernel \eqref{EqnRecursionKernel} introduces the geometry of $T^{*}\mathbb{P}^{1}$ through the canonical one-form $ydx$ in the denominator, although it does not appear as obviously when looking at where the $W_{g,n}$'s live, as the embedding of $S$ into $K_{X}$ made $K_{S} = \pi^{*}K_{X}|_{S}$, and so the two terms cancel out.\\

With this na\"ive approach to framing an $\mathcal{L}$-twisted topological recursion, we have already revealed something about the $W_{g,n}$ that was not present in the regular case.  A potentially interesting result of this generalization is a relationship between the $\mathcal{L}$-twisted Eynard-Ortantin differentials and the hyperk\"ahler structure on the moduli space of Higgs bundles.  While the moduli space of ordinary Higgs bundles in genus $g \geq 2$ possesses a canonical holomorphic symplectic structure (which derives from the hyperk\"ahler structure), moduli spaces of Higgs bundles with twists have Poisson structures that come in a family, no element of which is necessarily canonical.  The component of $W_{g,n}^{\mathcal{L}}$ in $\pi^{*}\mathcal{L}^{-1}|_{S}\otimes K_{S}$ defines a Poisson structure on a moduli space of $\pi^{*}\mathcal{L}^{-1}|_{S}$-twisted Higgs bundles defined on the spectral curve.  This observation begs two important ideas: in the ordinary topological recursion, the holomorphic symplectic form may be encoded in the Eynard-Orantin differentials (a partial result to this effect may already be present in the work of \cite{BaragliaHuang17}, who extract the so-called Donagi-Markman cubic from the differentials); and that Higgs bundles defined on the spectral curve itself may be relevant to topological recursion.\\

These observations serve as a reason to explore the na\"ive $\mathcal{L}$-twisted topological recursion.  To validate the framework that is being developed, we could appeal to one of the three main areas to which topological recursion has been applied: enumerative geometry, quantum curves, or the geometry of the Hitchin moduli space.  Any of these areas would prove meaningful and interesting for this framework, but as the main interest of this section lies in the relationship between Higgs bundles and topological recursion, we will choose the latter and study the relationship between $\mathcal{L}$-twisted Hitchin spectral curves and $\mathcal{L}$-twisted topological recursion.

\subsection{$\mathcal{L}=K$ Hitchin moduli space}

With our goal being to validate a framework for twisted topological recursion, we want to understand how (ordinary) topological recursion relates to the geometry of the (ordinary) Hitchin moduli space.  In this section, we return to the $\mathcal{L} = K$ setting and briefly go through the main results of \cite{BaragliaHuang17}, which we will seek to replicate in some capacity in the $\mathcal{L}$-twisted setting in subsequent sections.\\

In the $\mathcal{L}=K$ case, the moduli space of Higgs bundles $\mathcal{M}_{X}$ admits a complete hyperk\"ahler metric \cite{Hitchin87}.  This metric can be studied by studying a second related hyperk\"ahler metric, called the \emph{semi-flat metric}, defined over the regular locus of the Hitchin fibration $\mathcal{H}:\mathcal{M}_{X}^{reg}\to\mathcal{B}^{reg}$ (i.e. the locus containing non-singular fibres of $\mathcal{H}$).  A theorem of Hitchin \cite{Hitchin99} says that under a set of mild assumptions, $\mathcal{B}^{reg}$ admits a special K\"ahler structure. The special K\"ahler metric on $\mathcal{B}^{reg}$ can be combined with a metric along the fibers to produce the semi-flat metric \cite{CecottiFerraraGiradello89, Freed99, Hitchin99}.\\

On $\mathcal{B}^{reg}$, we can define a local \emph{conjugate coordinate system} $\{z_{1},\dots,z_{g_{S}} \}$ and $\{w_{1},\dots, w_{g_{S}} \}$, where $S$ is the spectral curve associated to a point in $\mathcal{B}^{reg}$, by
\begin{align}
    z_{i} &= \int_{A_{i}}\theta,\\
    w_{j} &= \int_{B_{j}}\theta,
\end{align}
where $\theta$ is the canonical one-form on $S$, and $\langle A_{1},...,A_{g},B_{1},...,B_{g} \rangle$ for $H_{1}(X,\mathbb{Z})$ is a symplectic basis.  With respect to the \emph{special coordinate system} $\{z_{i}\}$, the K\"ahler form $\omega$ can be written as
\begin{equation}\label{EqnSpecialKahlerForm}
    \omega = \frac{i}{2}Im(\tau_{ij})dz^{i}\wedge d\bar{z}^{j},
\end{equation}
where $\tau_{ij}$ is the period matrix of the spectral curve.  This means that the K\"ahler metric on $\mathcal{B}^{reg}$, and therefore the semi-flat metric on $\mathcal{M}_{X}$, can be written in terms of the period matrices $\tau_{ij}$ of spectral curves through \eqref{EqnSpecialKahlerForm}.\\

We can apply the topological recursion for Hitchin spectral curves defined in \emph{Section 3} to spectral curves associated to $\mathcal{B}^{reg}$. In this setting, the variational formula for the Eynard-Orantin invariants \cite{EynardOrantin07b} provides a relationship between derivatives of the period matrix $\tau_{ij}$ about a point $b\in\mathcal{B}^{reg}$ and the $g=0$ invariants $W_{0.n}$ of the spectral $S_{b}$ associated to the point $b$.

\begin{thm}[Baraglia-Huang]\label{ThmBH}
    \begin{equation}        \partial_{i_{1}}\partial_{i_{2}}\dots\partial_{i_{m-2}}\tau_{i_{m-1}i_{m}} = -\left( \frac{i}{2\pi} \right)^{m-1} \int_{p_{i_{1}}\in b_{i_{1}}}\dots\int_{p_{m}\in b_{i_{m}}} W_{0,m}(p_{1},...,p_{m})
    \end{equation}
\end{thm}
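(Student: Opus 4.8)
The plan is to derive the formula by induction on $m$, bootstrapping from two ingredients that are essentially available once the setup is in place: (i) the base case $m=2$, which identifies the $B$-cycle periods of $W_{0,2}=B$ with the period matrix $\tau$, and (ii) the variational formula (Theorem stated above for twisted E--O invariants, specialized to $g=0$), which says $\delta_i W_{0,k}(p_1,\dots,p_k) = -\tfrac{1}{2\pi i}\int_{p\in b_i} W_{0,k+1}(p,p_1,\dots,p_k)$. The key conceptual point to pin down first is that the deformation operators $\partial_{i}$ appearing on the left — derivatives along the directions in (the image of) the effective Hitchin base $\mathcal{B}_{eff}$ — coincide, under the identification $T\mathcal{B}_{eff} \cong H^0(\bigoplus \mathcal{L}^i\otimes K)$ and its pairing with $T_{\mathcal{Q}}Jac(S) = H^1(\bigoplus\mathcal{L}^{-i})$, with the variational operators $\delta_i$ that act on the E--O differentials. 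This is the content of ``local computations of the twisted Eynard--Orantin differentials mirror the $\mathcal{L}=K$ setting'', so I would invoke that identification, with the $b$-geometry / log-symplectic structure ensuring that the relevant period integrals are taken along cycles $b_i$ avoiding the ramification locus and the zero divisor $Z$ of $s$.

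For the base case $m=2$: by the normalization of $B$ (Definition of the Bergman kernel / $\widehat B$), one has $\oint_{z_1\in A_i}B(z_1,z_2)=0$ and $\oint_{z_1\in B_j}B(z_1,z_2)=2\pi i\, v_j(z_2)$, where $v_j$ are the normalized holomorphic (here: $b$-)differentials with $\int_{B_k}v_j=\tau_{jk}$. Integrating once more over $z_2\in b_{i_2}$ gives $\int_{b_{i_1}}\int_{b_{i_2}} W_{0,2} = 2\pi i\,\tau_{i_1 i_2}$, which is exactly $-\big(\tfrac{i}{2\pi}\big)^{1}$ times $\tau_{i_1 i_2}$ after accounting for $-(i/2\pi)=1/(2\pi i)$ up to the sign convention; I would fix the orientation/sign conventions here so the constant matches. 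This step should be routine.

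For the inductive step, assume the formula holds for $m-1$, i.e. $\partial_{i_1}\cdots\partial_{i_{m-3}}\tau_{i_{m-2}i_{m-1}} = -\big(\tfrac{i}{2\pi}\big)^{m-2}\int_{b_{i_1}}\cdots\int_{b_{i_{m-1}}} W_{0,m-1}$. Apply $\partial_{i_1}$ (renaming indices appropriately) to both sides; on the right, $\partial$ commutes with the cycle integrals since the cycles $b_i$ are chosen locally constant and disjoint from ramification points and from $Z$, so I only need $\partial_{i_0}$ acting on $W_{0,m-1}$ itself. By the identification of $\partial_{i_0}$ with $\delta_{i_0}$ and the variational formula, $\delta_{i_0} W_{0,m-1}(p_1,\dots,p_{m-1}) = -\tfrac{1}{2\pi i}\int_{p_0\in b_{i_0}} W_{0,m}(p_0,p_1,\dots,p_{m-1})$; substituting yields one extra cycle integral and one extra factor of $-\tfrac{1}{2\pi i} = -(i/2\pi) \cdot(-1)$... again a sign to reconcile, but the power of $(i/2\pi)$ advances from $m-2$ to $m-1$ as required, and symmetry of $W_{0,m}$ in its arguments lets me reorder so the indices line up. The main obstacle, and the step deserving the most care, is the justification that $\partial_{i_0}$ (deformation in the effective Hitchin base) is genuinely the operator $\delta_{i_0}$ of the variational formula — i.e. that the effective base $\mathcal{B}_{eff}$ really is dual to $T_{\mathcal{Q}}Jac(S)$ in the way that makes the variational formula applicable, and that differentiating under the period integrals is legitimate in the $b$-geometric/log setting (no hidden boundary contributions from $Z$, and the cycles can indeed be chosen to avoid both $R$ and $Z$ as $b$ varies). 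Everything else is bookkeeping of constants and symmetrization.
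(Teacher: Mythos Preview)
The paper does not supply its own proof of this statement: Theorem~\ref{ThmBH} is quoted as a result of Baraglia--Huang \cite{BaragliaHuang17} and serves purely as motivation for the paper's twisted analogue (Theorem~\ref{ThmBigThm}). So there is no in-paper proof to compare against directly.

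That said, your proposal is conceptually muddled in a way worth flagging. Theorem~\ref{ThmBH} lives entirely in the ordinary $\mathcal{L}=K$ setting: there is no section $s$, no zero divisor $Z$, no $b$-geometry, and the effective Hitchin base coincides with the full Hitchin base. All of the twisted machinery you invoke --- $\mathcal{B}_{eff}$, the log-symplectic structure, avoiding $Z$, the operator $\delta_i$ as distinct from $\partial_i$ --- belongs to the later sections of the paper and is irrelevant here. What you have actually sketched is a proof of Theorem~\ref{ThmBigThm}, and your inductive scheme (base case from the $B$-cycle integrals of $W_{0,2}=B$ yielding $2\pi i\,\tau_{ij}$, then repeated application of the variational formula $\delta_i W_{0,k}=-\tfrac{1}{2\pi i}\int_{b_i}W_{0,k+1}$) is precisely the route the paper takes for \emph{that} result. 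For the untwisted Theorem~\ref{ThmBH}, the same induction works with the ordinary Eynard--Orantin variational formula and the special coordinates $z_i=\int_{A_i}\theta$ on $\mathcal{B}^{reg}$; the identification $\partial_i=\delta_i$ is immediate and there is no separate ``key conceptual point'' to establish. Strip out the twisted apparatus and your argument is correct, modulo the sign and constant bookkeeping you already flagged.
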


This formula shows that the $g=0$ invariants for the spectral curve $S_{b}$ compute the Taylor series expansion of the period matrix about $b\in\mathcal{B}^{reg}$, and by extension of what was said above, the semi-flat metric on $\mathcal{M}_{X}^{reg}$.  Notably, Theorem \ref{ThmBH} states that the data of single spectral curve is enough to understand the geometry of nearby spectral curves.\\

In the $\mathcal{L} \neq K$ setting, the moduli space $\mathcal{M}^{\mathcal{L}}_{X}$ is no longer hyperk\"ahler, so we cannot expect to make claims relating the $g=0$ invariants to a hyperk\"ahler metric on the moduli space, however, we can (and will) seek to find an analogy for Theorem \ref{ThmBH}.\\

There are two concerns that impede our ability to immediately replicate the result.  Firstly, in our twisted setting, the dimension of $\mathcal{M}_{X}^{\mathcal{L}}$ is not twice the dimension of the Hitchin base $\mathcal{B}$.  This means that the base and fibres of the moduli space no longer have the same dimension. In fact, the dimension of the base is larger than that of the fibres, so we can no longer view $\mathcal{B}^{reg}$ as the deformation space of spectral curves.  Secondly, in the $K$-case, we have a canonical coordinate system on $T^{*}X$ given by the natural symplectic structure, which allows for many computations to be simplified, and relates the tautological section to the geometry of the total space.  Without imposing a symplectic structure on $\text{Tot}(\mathcal{L})$, and thus limiting our breadth of line bundles, we need to find a sufficiently natural choice of coordinate system for $\text{Tot}(\mathcal{L})$.  We seek to address these concerns by studying more in-depth the $\mathcal{L}$-twisted moduli space.


\subsection{Deformation theory for the $\mathcal{L}$-twisted moduli space}\label{SectionDeformationTheory}

With our attention focused on $\mathcal{L}$-twisted Higgs bundles, we aim to describe the local structure of the moduli space $\mathcal{M}_{X}^{\mathcal{L}}$.  In this section, we will intiate a thorough examination of the deformation theory $\mathcal{M}_{X}^{\mathcal{L}}$ by studying its hypercohomology (\emph{cf.} \cite[pp. 438-447]{GriffithsHarris94}).\\

On a Riemann surface $X$, an $\mathcal{L}$-twisted Higgs field satisfies
\begin{equation*}
    \phi\wedge\phi = 0 \in H^{0}(X, \text{End}(\mathcal{E})\otimes \wedge^{2}\mathcal{L}).
\end{equation*}
There is a natural sequence of sheaves associated to a Higgs bundle $(\mathcal{E}, \phi)$, given by
\begin{equation}
    \text{End}(\mathcal{E}) \xrightarrow{-\wedge\phi} \text{End}(\mathcal{E})\otimes \mathcal{L} \xrightarrow{-\wedge\phi} \text{End}(\mathcal{E})\otimes\wedge^{2}\mathcal{L} \xrightarrow{-\wedge\phi} \dots,
\end{equation}
where $-\wedge\phi$ acts by the adjoint representation on the $\text{End}(\mathcal E)$ part.  Because $\phi\wedge\phi = 0$, we have that $-\wedge\phi$ is a differential, and so we can use this resulting complex to define cohomologies on $\text{End}(\mathcal{E})\otimes^{i} \mathcal{L}$.  This hypercohomology intertwines to cohomology theories, namely the cohomology of sheaves arising from the \v Cech complex and the cohomology arising from the Higgs field.\\

To be specific, we consider the hypercohomologies associated to a stable $\mathcal{L}$-twisted Higgs bundle $(\mathcal{E}, \phi)$ on a Riemann surface $X$ given respectively by the two double complexes
\begin{align*}
    D &= (\delta, \wedge\phi),\\
    D' &= (\wedge\phi, \delta),
\end{align*}
where $\wedge\phi$ is the differential coming from the Higgs field, and $\delta$ is the \v Cech differential.\\

We start by computing the hypercohomology with the complex $D$.  The hypercohomology fits into the short exact sequence
\begin{equation}\label{EqnSESD}
    D:\ 0 \rightarrow E^{1,0} \rightarrow \mathbb{H}^{1} \rightarrow E^{0,1} \rightarrow 0,
\end{equation}
where
\begin{align*}
    E^{1,0} &= \frac{\text{ker} H^{0}(\text{End}\mathcal{E}\otimes \mathcal{L})\xrightarrow{\wedge\phi} H^{0}(\text{End}\mathcal{E}\otimes \mathcal{L}^{\otimes 2})}{\text{im} H^{0}(\text{End}\mathcal{E})\xrightarrow{\wedge\phi} H^{0}(\text{End}\mathcal{E}\otimes \mathcal{L})},\\
    E^{0,1} &= \text{ker} H^{1}(\text{End}\mathcal{E}) \xrightarrow{\wedge\phi} H^{1}(\text{End}\mathcal{E}\otimes \mathcal{L}).
\end{align*}
Here, $\mathbb{H}^{1} = T_{(\mathcal{E},\phi)}\mathcal{M}^{\mathcal{L}}_{X}(r,d)$ is the tangent space to the moduli space of rank $r$, degree $d$, $\mathcal{L}$-twisted Higgs bundles.  This is a short exact sequence around $\mathbb{H}^{1}$ because of vanishing due to stability and dimensionality ($X$ is a curve and $rk\mathcal{L}=1$).\\

To determine $\mathbb{H}^{1}$, we first need to better understand $E^{1,0}$ and $E^{0,1}$.  We start by looking at $E^{0,1}$.  Using Serre Duality, we  compute the dimension of $H^{1}(\text{End}\mathcal{E}\otimes \mathcal{L})$ as
\begin{equation}\label{EqnDegreeH1is0}
    h^{1}(\text{End}\mathcal{E}\otimes \mathcal{L}) = h^{0}(K\otimes \text{End}\mathcal{E}\otimes \mathcal{L}^{*}) = 0,
\end{equation}
which vanishes because $\deg\mathcal{L} > \deg K$.  This means $E^{0,1} = H^{1}(\text{End}\mathcal{E}) = \mathcal{U}_{X}(r,d)$, which is the moduli space of stable bundles on $X$.  Its dimension is then
\begin{equation}\label{EqnDimE01}
    \dim E^{0,1} = \dim\mathcal{U}_{X}(r,d) = r^{2}(g-1)+1.
\end{equation}

Turning our attention to $E^{1,0}$, observe that
\begin{equation*}
    E^{1,0} = \frac{H^{0}(\text{End}\mathcal{E}\otimes \mathcal{L})}{\text{im}(\wedge\phi)}.
\end{equation*}
The map $\wedge\phi$ has kernel generated by $1$ when acting on $H^{0}(\text{End}\mathcal{E})$, as stability implies simplicity.  Thus we have
\begin{align*}
    \dim E^{1,0} &= h^{0}(\text{End}\mathcal{E}\otimes \mathcal{L}) - h^{0}(\text{End}\mathcal{E}) + 1.
\end{align*}
To handle the first term, $h^{0}(\text{End}\mathcal{E}\otimes \mathcal{L})$, we apply Riemann-Roch and Serre Duality, recalling from \eqref{EqnDegreeH1is0} that $h^{1}(\text{End}\mathcal{E}\otimes \mathcal{L}) = 0$,
\begin{align*}
    h^{0}(\text{End}\mathcal{E}\otimes \mathcal{L}) &= \deg(\text{End}\mathcal{E}\otimes \mathcal{L}) + \text{rk}(\text{End}\mathcal{E}\otimes \mathcal{L})(1-g) - h^{1}(\text{End}\mathcal{E}\otimes \mathcal{L})\\
    &= r^{2}\deg L + r^{2}(1-g).
\end{align*}
For the second term, $h^{0}(\text{End}\mathcal{E})$, if we assume that $\mathcal{E}$ is stable as a vector bundle (which is generically true), then $h^{0}(\text{End}\mathcal{E}) = 1$.
We compute the $\dim E^{1,0}$ as
\begin{align}
    \nonumber\dim E^{1,0} &= h^{0}(\text{End}\mathcal{E}\otimes L) - h^{0}(\text{End}\mathcal{E}) + 1\\
    \nonumber&= r^{2}\deg\mathcal{L} + r^{2}(1-g) - 1 + 1\\
    &= r^{2}\deg\mathcal{L} + r^{2}(1-g).\label{EqnDimE10}
\end{align}

Combining \eqref{EqnDimE01} and \eqref{EqnDimE10}, the dimension of $\mathbb{H}^{1}$ is
\begin{equation*}
    \dim\mathbb{H}^{1} = r^{2}\deg\mathcal{L} + r^{2}(1-g) + r^{2}(g-1)+1 = r^{2}\deg\mathcal{L} + 1.
\end{equation*}
Because $\mathbb{H}^{1} = T_{(\mathcal{E},\phi)}\mathcal{M}^{\mathcal{L}}_{X}(r,d)$, we conclude that the dimension of $\mathcal{M}^{\mathcal{L}}_{X}(r,d)$ is
\begin{equation}
    \dim\mathcal{M}^{\mathcal{L}}_{X}(r,d) = r^{2}\deg\mathcal{L} + 1,
\end{equation}
which agrees with the calculations from \cite{Nitsure91}.\\

Denote by $\mathbb{L}$, the bundle over $\mathcal{U}_{x}(r,d)$ whose fibre at $\mathcal{E}$ is $H^{0}(\text{End}\mathcal{E}\otimes \mathcal{L})$.  By the fact that $(\mathcal{E}, \phi)$ is stable for all $\phi\in H^{0}(\text{End}\mathcal{E}\otimes \mathcal{L})$, and $\text{Aut}(\mathcal{E})$ acts trivially on $H^{0}(\text{End}\mathcal{E}\otimes \mathcal{L})$, we have an injection $\text{Tot}\left(\mathbb{L}\right) \hookrightarrow \mathcal{M}^{\mathcal{L}}_{X}(r,d)$ that is open and dense (as their dimensions equal).  In other words, $\mathcal{M}^{\mathcal{L}}_{X}(r,d)$ is the completion of $\mathbb{L}$ under stability, and
\begin{align}
    \nonumber T_{(\mathcal{E},\phi)}\mathcal{M}^{\mathcal{L}}_{X}(r,d) &\cong T_{(\mathcal{E},\phi)}\mathbb{L}\\
    &\cong \underbrace{H^{0}(\text{End}\mathcal{E}\otimes L)}_{\mathbb{L}_{\mathcal{E}}} \times \underbrace{H^{1}(\text{End}E)}_{T_{\mathcal{E}}\mathcal{U}_{X}(r,d)},\label{EqnHSplitting1}
\end{align}
whenever $\mathcal{E}$ is a stable bundle.  This means that $\mathbb{L}$ is the $\mathcal{L}$-twisted analogue of $T^{*}\mathcal{U}_{X}(r,d)$.

\begin{remark}
\emph{The dimension of $\mathcal{M}^{\mathcal{L}}_{X}(r,d)$ is actually independent of the stability of $\mathcal{E}$\footnote{The importance of stability in the previous calculations was to reveal the relationship to $\mathbb{L}$.}.  If we do not assume that $\mathcal{E}$ is stable, then}
\begin{align*}
    \dim\mathbb{H}^{1} &= h^{0}(\textrm{End}\mathcal{E}\otimes \mathcal{L}) - h^{0}(\text{End}\mathcal{E}) + 1 + h^{1}(\text{End}\mathcal{E})\\
    &= r^{2}\deg\mathcal{L} + r^{2}(1-g) + 1 - (h^{0}(\text{End}\mathcal{E}) - h^{1}(\text{End}\mathcal{E}))\\
    &= r^{2}\deg\mathcal{L} + 1 + r^{2}(1-g) - r^{2}(1-g)\\
    &= r^{2}\deg\mathcal{L} + 1.
\end{align*}
\end{remark}

We now consider the hypercohomology of $D'$.  We have the same type of short exact sequence as \eqref{EqnSESD}, with $E^{1,0}$ and $E^{0,1}$ now given by
\begin{align*}
    E^{1,0} &= H^{1}\left(\ker(\textrm{End}\mathcal{E} \xrightarrow{\wedge\phi} \text{End}\mathcal{E}\otimes \mathcal{L})\right),\\
    E^{0,1} &= H^{0}\left(\frac{\text{End}\mathcal{E}\otimes \mathcal{L}}{\text{im}(\wedge\phi)}\right).
\end{align*}

Suppose that $(\mathcal{E}, \phi)$ is not only stable but also regular, which means that $ker(\wedge\phi)$ is minimally generated, i.e. $ker(\wedge\phi)$ is a rank $r$ subsheaf of $End\mathcal{E}$ --- this is actually a generic property.
Consider the compositions $\phi^{\otimes i}: \mathcal{E} \to \mathcal{E}\otimes \mathcal{L}^{\otimes i}$.  Note that
\begin{equation*}
    \phi^{i}\in Hom(\mathcal{E}, \mathcal{E}\otimes \mathcal{L}^{i}) = Hom(\mathcal{L}^{-i}, \mathcal{E}^{*}\otimes \mathcal{E})
\end{equation*}
and so
\begin{equation*}
    \phi^{i}: \mathcal{L}^{-1} \to End(\mathcal{E}).
\end{equation*}
Notice that
\begin{equation*}
    \phi^{i} \wedge \phi = [\phi^{i}, \phi] = 0,
\end{equation*}
so we have $\phi^{0}, \phi^{1}, \dots, \phi^{r-1}\in \ker(\wedge\phi)$, and in fact, they generate the sheaf $\ker(\wedge\phi)$, meaning that
\begin{equation*}
    \ker(\wedge\phi) = \mathcal{O}\oplus \mathcal{L}^{-1}\oplus\cdots\oplus \mathcal{L}^{-(r-1)}.
\end{equation*}

The sheaf $\frac{\text{End}\mathcal{E}\otimes \mathcal{L}}{\text{im}(\wedge\phi)}$ is the cokernel of $\wedge\phi$.  Starting from the exact sequence
\begin{equation*}
    0 \rightarrow \ker\wedge\phi \xrightarrow{\phi^{i}} \text{End}\mathcal{E} \xrightarrow{\wedge\phi} \text{End}\mathcal{E}\otimes \mathcal{L} \rightarrow \text{coker}\wedge\phi \rightarrow 0,
\end{equation*}
we dualize the sequence to obtain
\begin{equation*}
    0 \rightarrow (\text{coker}\wedge\phi)^{*} \rightarrow \text{End}\mathcal{E}\otimes \mathcal{L}^{-1} \xrightarrow{\phi^{*}\wedge} \text{End}\mathcal{E} \rightarrow (\ker\wedge\phi)^{*} \rightarrow 0,
\end{equation*}
and tensor by $\mathcal{L}$ to return to the original sequence
\begin{equation}
   0 \rightarrow (\text{coker}\wedge\phi)^{*}\otimes \mathcal{L} \rightarrow \text{End}\mathcal{E} \rightarrow \text{End}\mathcal{E}\otimes \mathcal{L} \rightarrow (\ker\wedge\phi)^{*}\otimes \mathcal{L} \rightarrow 0
\end{equation}
and so, we see that
\begin{equation*}
    \text{coker}\wedge\phi = (\ker\wedge\phi)^{*} =  \mathcal{L}\oplus \mathcal{L}^{2}\oplus\cdots\oplus \mathcal{L}^{r}.
\end{equation*}

This provides a different splitting of $\mathbb{H}^{1}$ as
\begin{equation}\label{EqnHSplitting2}
    \mathbb{H}^{1} = H^{1}\left(\bigoplus^{r-1}_{i=0} \mathcal{L}^{-i}\right)\times H^{0}\left(\bigoplus^{r}_{i=1} \mathcal{L}^{i}\right).
\end{equation}
This splitting is induced by the derivative of the Hitchin map,
\begin{equation*}
    \mathcal{H}: \mathcal{M}^{\mathcal{L}}_{X}(r,d) \to \mathcal{B},
\end{equation*}
where $\mathcal{B} = H^{0}\left(\bigoplus^{r}_{i=1} \mathcal{L}^{i}\right)$.  Recall from Section \ref{SubsectionSpectralCorrespondence}, that $\mathcal{M}_{X}^{\mathcal{L}}$ is fibred by Jacobians of spectral curves.  In other words, \eqref{EqnHSplitting2} tells us that
\begin{align*}
    T_{(\mathcal{E},\phi)}\mathcal{M}_{X}^{\mathcal{L}}(r,d) &\cong T_{\mathcal{Q}}Jac(S)\times T_{\mathcal{H}(\mathcal{E},\phi)}\mathcal{B}\\
    &= H^{1}\left(\bigoplus^{r-1}_{i=0} \mathcal{L}^{-i}\right)\times \mathcal{B},
\end{align*}
where $S$ is the spectral curve of $(\mathcal{E},\phi)$, and $\mathcal{Q}$ is the spectral line bundle.  In particular, the tangent space to a Hitchin fiber over the regular locus (call this $\mathcal{B}^{reg}$) is $\cong H^{1}\left(\bigoplus^{r-1}_{i=0} \mathcal{L}^{-i}\right)$, and the genus of the spectral curve is given by
\begin{equation*}
    g_{S} = \dim Jac(S) = \sum_{i=0}^{r-1} h^{1}(X, \mathcal{L}^{i}).
\end{equation*}


Serre duality on $X$ pairs $T_{M}Jac(S) \cong H^{1}\left(\bigoplus^{r-1}_{i=0} \mathcal{L}^{-i}\right)$ with $H^{0}\left(\bigoplus^{r-1}_{i=0} \mathcal{L}^{i}\otimes K\right)$.  We can map this space to a subvariety $\widetilde{\mathcal{B}}\subset\mathcal{B}$ by a choice of section $s\in H^{0}(X, K^{*}\otimes \mathcal{L})\backslash \{0\}$, which acts by multiplication.  Recall that in the $\mathcal{L}=K$ setting, the Hitchin base has the same dimension as fibres.  The vector space $H^{0}\left(\bigoplus^{r-1}_{i=0} \mathcal{L}^{i}\otimes K\right)$ is fulfilling this role in the $\mathcal{L}$-twisted picture.

\begin{dfn}
We call $\mathcal{B}_{eff} \coloneqq H^{0}\left(\bigoplus^{r-1}_{i=0} \mathcal{L}^{i}\otimes K\right)$ the \textbf{effective Hitchin base}.
\end{dfn}

By duality,
\begin{equation*}
    \dim \mathcal{B}_{eff} = \dim Jac(S) = g_{s},
\end{equation*}
which means $h^{-1}(\mathcal{B}_{eff})$ is a moduli space of $\mathcal{L}$-twisted Higgs bundles in which the fibre and base are equidimensional.\\

In particular, when $\mathcal{L}=K$, $s\in H^{0}(K^{*}\otimes K)\backslash \{0\}$ is a nonzero multiple of the identity, and $\mathcal{B}_{eff} = \mathcal{B}$.

\begin{ex}
Consider the situation where $X = \mathbb{P}^{1},\ r = 2,\ d = -1,\ \mathcal{L}=\mathcal{O}(2)$.  The moduli space has dimension
\begin{equation*}
    \dim\mathcal{M}_{\mathbb{P}^{1}}^{\mathcal{O}(2)}(2, -1) = 2^{2}deg(\mathcal{O}(2)) + 1 = 2^{2}(2) + 1 = 9.
\end{equation*}
The Hitchin base is
\begin{equation*}
    \mathcal{B} = H^{0}(\mathcal{O}(2))\oplus H^{0}(\mathcal{O}(4)) \cong \mathbb{C}^{8},
\end{equation*}
and so the moduli space is fibred by $1$-dimensional (elliptic) fibres.

\noindent The effective Hitchin base, on the other hand, is
\begin{equation*}
    \mathcal{B}_{eff} = H^{0}(\mathcal{O}(-2))\oplus H^{0}(\mathcal{O}(2)\otimes\mathcal{O}(-2)) \cong \mathbb{C},
\end{equation*}
which has the same dimension as the fibre, and which is also that of the moduli space of elliptic curves.\\

To see $\mathcal{B}_{eff}$ in $\mathcal{B}$, we need to choose an $s\in H^{0}(\mathcal{O}(-2)^{*}\otimes\mathcal{O}(2))\backslash 0 = H^{0}(\mathcal{O}(4))\backslash 0$.  Given a Higgs bundle $(E,\phi)$, there is a canonical choice; in this case: $s = \det\phi$.  With this choice of $s$, $\mathcal{B}_{eff}$ is embedded as the line
\begin{equation*}
    \widetilde{\mathcal{B}} = \{ (0, c\det\phi)\ |\ c\in\mathbb{C} \} \subset \mathcal{B}.
\end{equation*}

\begin{figure}[htp] 
    \centering
    \includegraphics[width=9cm]{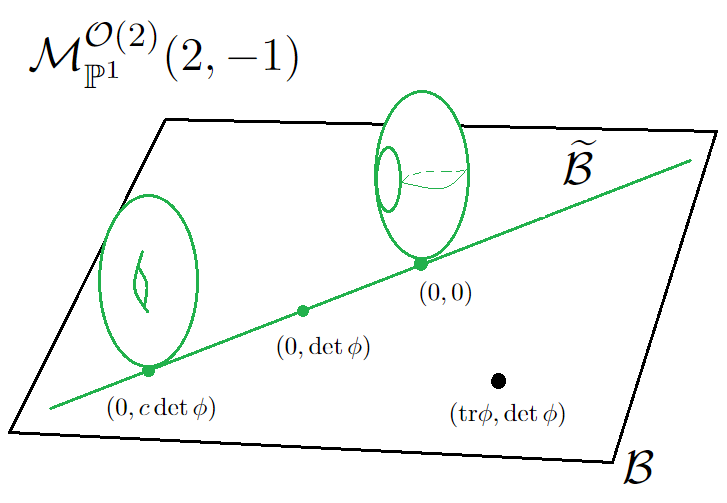}
    \caption{$\mathcal{B}_{eff}$ mapped into $\mathcal{B}$ by $s=\det\phi$.}
    \label{FigTwistedModuliExample}
\end{figure}

\end{ex}

The effective Hitchin base $\mathcal{B}_{eff}$ serves as a moduli space of deformations spectral curves and can be related to the period data as follows.

Let $\pi: Tot(\mathcal{L})\to X$ be the natural projection.  The derivative of $\pi|_{S}$ is
\begin{equation*}
    d\pi : TS \to \pi^{*}TX \in Hom(\pi^{*}K, K_{S}),
\end{equation*}
where $K = K_{X}$.  Note that $\pi^{*}s: \pi^{*}K\to \pi^{*}\mathcal{L}$, so if $\eta$ is the tautological section in $H^{0}(S, \pi^{*}\mathcal{L})$, then we have that $(\pi^{*}s)^{-1}\eta$ is a (meromorphic) section of $\pi^{*}K$.  We can define a meromorphic $1$-form on $S$ by
\begin{equation}
    \Theta = d\pi((\pi^{*}s)^{-1}\eta)\in H^{0}(S, K_{S}).
\end{equation}

\begin{dfn}
    The meromorphic $1$-form $\Theta$ is called the \textbf{twisted canonical} $1$-form.
\end{dfn}

If $\{ a_{1},b_{1},\dots, a_{g_{S}}, b_{g_{S}} \}$ is a basis for $H_{1}(S,\Z)$, then $\{ Re\int_{a_{i}(b_{i})}\Theta, Im\int_{a_{i}(b_{i})}\Theta \}$ form local (singular) coordinates on $B_{eff}$.  We may choose a basis $\omega_{1},\dots,\omega_{g_{s}}$ of $H^{0}(S,K_{S})$ so that $\int_{a_{i}}\omega_{j} = \delta_{ij}$, and $\tau = \big[ \int_{b_{i}}\omega_{j} \big]$ is the $g_{S}\times g_{S}$ period matrix.  Define $\lambda_{i} = Re\int_{a_{i}}\Theta + iIm\int_{a_{i}}\Theta$.

\begin{dfn}
    We refer to the collection of derivatives $\frac{\partial\tau_{jk}}{\partial\lambda_{i}}$ as the \textbf{twisted Donagi-Markman cubic}.
\end{dfn}

The spectral curve $S$ itself is a divisor in $Tot(\mathcal{L})$, and the normal bundle of a divisor is given by the line bundle associated to the divisor, which in this case is $\pi^{*}\mathcal{L}^{r}$ (since $S$ is the zero locus of a degree-$r$ polynomial in $\eta\in H^{0}(S,\pi^{*}\mathcal{L})$).  By adjunction,
\begin{equation*}
    K_{S} = K_{Tot(\mathcal{L})|_{S}}\otimes N_{S} = K_{Tot(\mathcal{L})}|_{S}\otimes \pi^{*}\mathcal{L}^{r}.
\end{equation*}
We remark that in the $\mathcal{L}=K$ setting, $K_{Tot(K)}\cong\mathcal{O}_{Tot(K)} \Rightarrow K_{S} = \pi^{*}K^{r}$.\\

Note (cf. \cite{BEAUVILLEA1989SCAT}) that
\begin{equation*}
    \pi_{*}\mathcal{O}_{S} \cong \mathcal{O}\oplus \mathcal{L}^{-1} \oplus \cdots \oplus \mathcal{L}^{-(r-1)}.
\end{equation*}
Tensoring with $\mathcal{L}^{r}$, we have
\begin{equation*}
    \pi_{*}\mathcal{O}_{S}\otimes \mathcal{L}^{r} \cong \mathcal{L} \oplus \mathcal{L}^{2} \oplus \cdots \oplus \mathcal{L}^{r}.
\end{equation*}
Looking at cohomology
\begin{align*}
    H^{0}(X, \pi_{*}\mathcal{O}_{S}\otimes \mathcal{L}^{r}) &\cong \bigoplus_{i=1}^{r} H^{0}(X, \mathcal{L}^{i})\\
    H^{0}(S, \mathcal{O}_{S}\otimes \pi^{*}\mathcal{L}^{r}) &\cong \mathcal{B}\\
    H^{0}(S, \pi^{*}\mathcal{L}^{r}) \cong \mathcal{B},
\end{align*}
by properties of the direct image functor.\\

In other words, deformations of $(\mathcal{E}, \phi)$ in the normal direction to $S$ correspond to deformations along the full Hitchin base, while first-order deformations of $S$ correspond to deformations along the effective Hitchin base.


\subsection{Geometry of $b$-manifolds}

We take a brief digression to discuss the language of $b$-manifolds.  This will prove to be a useful tool when working with Higgs bundles in the twisted setting.  These ideas were pioneered in \cite{melrose1993atiyah} in the context of differential operators on manifolds with boundary.  The category of $b$-manifolds was further developed in \cite{guillemin2014symplectic}, \cite{M_rcu__2014} in a more general setting in order to study so-called log-symplectic structures.  Our interest in $b$-geometry is the basic language of $b$-manifolds, and their $b$-tangent and $b$-cotangent bundles, so we will focus on the concepts of $b$-geometry most relevant to our needs, following \cite{guillemin2014symplectic}.

\begin{dfn}
    A \textbf{$b$-manifold} is a pair $(M,Z)$, where $M$ is an oriented manifold and $Z$ is an oriented co-dimension one submanifold of $M$.  A \textbf{$b$-map} is a smooth map $f:(M_{1},Z_{1}) \to (M_{2},Z_{2})$ such that $f^{-1}(Z_{2}) = Z_{1}$, and $f$ is transverse to $Z_{2}$, i.e.
    \begin{equation*}
        T_{f(p)M_{2}} = Im(d_{p}f) + T_{f(p)}Z_{2},
    \end{equation*}
    for all $p\in Z_{1}$.
\end{dfn}

\begin{remark}
\emph{When the setting is clear, we will write $M$ in place of $(M,Z)$ for convenience.}
\end{remark}

\begin{dfn}
    Let $(M,Z)$ be a $b$-manifold.  A $b$-vector field on $M$ is a vector that is tangent to $Z$ for all $p\in Z$.  Denote the set of $b$-vector fields by $^{b}\mathfrak{X}(M)$.
\end{dfn}

An ``ordinary'' vector field $X\in\mathfrak{X}(M)$ is a $b$-vector field on $(M,Z)$ \emph{iff} for all $p\in Z$, there is a neighbourhood $(U, x_{1},\dots,x_{n})$ where $Z\cap U$ is defined by $x_{1} = 0$ and
\begin{equation*}
    X|_{U} = f_{1}x_{1}\frac{\partial}{\partial x_{1}} + f_{2}\frac{\partial}{\partial x_{2}} + \dots + f_{n}\frac{\partial}{\partial x_{n}}
\end{equation*}

for a unique collection of smooth functions $f_{1},\dots, f_{n}\in C^{\infty}(U)$.  This makes $^{b}\mathfrak{X}(M)$ a locally free $C^{\infty}(M)$-module with local bases given by

\begin{align*}
    &\Big\{ \frac{\partial}{\partial x_{1}}, \dots, \frac{\partial}{\partial x_{n}} \Big\} \ \ \ \ \ \ \ \ \ \ \ \ \ \ \ \ \ \ \ \text{away from }Z,\\
    &\Big\{ x_{1}\frac{\partial}{\partial x_{1}}, \frac{\partial}{\partial x_{2}}, \dots, \frac{\partial}{\partial x_{n}} \Big\} \ \ \ \ \ \ \ \ \ \text{near to } Z.
\end{align*}

An application of the Serre-Swan theorem (c.f. \cite{taylor2002several} Proposition 7.6.5) tells us that the set of $b$-vectors are sections of a vector bundle on $M$.

\begin{dfn}
    Let $(M,Z)$ be a $b$-manifold.  The \textbf{$b$-tangent bundle} of $M$, denoted $^{b}TM$, is the vector bundle whose sections are $^{b}\mathfrak{X}(M)$.
\end{dfn}

Starting with a $b$-vector field $v$ on $M$, if we take the restriction of $v$ to $Z$, we get a vector field $v|_{Z}$ which is tangent to $Z$ for all $p \in Z$. Hence, $v|_{Z}$ defines a tangent vector field on $Z$.  Using this restriction, we have a morphism of $C^{\infty}(Z)$-modules $\Gamma(^{b}TM|_{Z}) \to \Gamma(TZ)$, which is induced by a vector bundle isomorphism
\begin{equation*}
    ^{b}TM|_{Z} \to TZ.
\end{equation*}

The kernel of this isomorphism is a line bundle with a canonical non-trivial section $w$, call the \textbf{normal $b$-vector field} of $M$.  In local coordinates, the vector field $w$ can be written in a coordinate independent way as
\begin{equation*}
    w = x_{1}\frac{\partial}{\partial x_{1}},
\end{equation*}
where $\{ x_{1} = 0\}$ locally defines $Z$.\\

At points $p \in M\backslash Z$, the $b$-tangent space at $p$ coincides with the usual tangent space, and at points $p \in Z$, there is a surjective map
\begin{equation*}
    ^{b}T_{p}M \to T_{p}Z
\end{equation*}
whose kernel is spanned by $w_{p}$, the normal $b$-vector field at $p$.  So we can write

\[ ^{b}T_{p}M = \begin{cases} 
      T_{p}M & p \in M\backslash Z \\
      T_{p}Z\oplus span\{ x_{1}\frac{\partial}{\partial x_{1}}|_{p} \} & p \in Z \\
   \end{cases}.
\]

\begin{remark}
\emph{As an ordinary vector field, $x_{1}\frac{\partial}{\partial x_{1}}$ vanishes along $Z$; however, it is non-vanishing when viewed as a $b$-vector field.  Around $Z$, we can think of $x_{1}\frac{\partial}{\partial x_{1}}$ as a formal object from the view point of $b$-geometry.}
\end{remark}

\begin{dfn}
    Let $(M,Z)$ be a $b$-manifold.  The \textbf{$b$-cotangent bundle} is the vector bundle $^{b}T^{*}M$ dual to $^{b}TM$.
\end{dfn}

At points $p \in M\backslash Z$, we have that $^{b}T^{*}_{p}M = (^{b}T_{p}M)^{*} = (T_{p}M)^{*} = T^{*}_{p}M$ coincides with the usual cotangent space.  At points $p \in Z$, the dual of the map for $b$-tangent spaces above gives us an embedding

\begin{equation*}
    T^{*}_{p} \to ^{b}T^{*}_{p}M
\end{equation*}

whose image is $\{ l\in (^{b}T_{p}M)^{*} | l(w_{p}) = 0 \}$.\\

Let $\{ x_{1},\dots,x_{n} \}$ be coordinates around $p$ such that $Z$ is locally defined by $\{ x_{1} = 0 \}$.  Consider the one-form $\frac{dx_{1}}{x_{1}}$.  At points away from $Z$, $\frac{dx_{1}}{x_{1}}$ is a well-defined one-form.  The pairing of $\mu$ with any $b$-vector field extends smoothly over $Z$ because
\begin{equation*}
    \langle f_{1}x_{1}\frac{\partial}{\partial x_{1}} + f_{2}\frac{\partial}{\partial x_{2}} + \dots + f_{n}\frac{\partial}{\partial x_{n}}, \frac{dx_{1}}{x_{1}} \rangle = f_{1}.
\end{equation*}

This means $\frac{dx_{1}}{x_{1}}$ can be extended over $Z$ as a section of $^{b}T^{*}M$.  We will denote this section as $\frac{dx_{1}}{x_{1}}$, viewed as a formal object around $Z$ in a similar manner as with the $b$-tangent spaces around $Z$. Moreover, as $\frac{dx_{1}}{x_{1}}(w_{p}) = 1$ for $p \in Z$, we know that $\frac{dx_{1}}{x_{1}} \notin \{ l\in (^{b}T_{p}M)^{*} | l(w_{p}) = 0 \}$, and so we can write

\[ ^{b}T^{*}_{p}M = \begin{cases} 
      T^{*}_{p}M & p \in M\backslash Z \\
      T^{*}_{p}Z\oplus span\{ \frac{dx_{1}}{x_{1}}|_{p} \} & p \in Z \\
   \end{cases}.
\]

\begin{dfn}
    Denote by $^{b}\Omega^{k}(M)$ the space of \textbf{$b$-de Rham $k$-forms}, i.e. sections of $\bigwedge^{k}\left( ^{b}T^{*}M\right)$.
\end{dfn}

We can see that $^{b}\Omega^{k}(M)$ sits inside the usual space of $k$-forms in the following way.  Let $\mu\in\Omega(M)$ be a $k$-form.  We interpret it as a section of $\bigwedge^{k}\left( ^{b}T^{*}M\right)$ by
\begin{itemize}
    \item at $p\in M\backslash Z$ 
    \begin{equation*}
        \mu_{p}\in \bigwedge^{k}\left( T^{*}_{p}M\right) = \bigwedge^{k}\left( ^{b}T^{*}_{p}M\right),
    \end{equation*}
    \item at $p\in Z$
    \begin{equation*}
        \mu_{p} = (i^{*}\mu)_{p}\in \bigwedge^{k}\left( T^{*}_{p}Z\right) \subset \bigwedge^{k}\left( ^{b}T^{*}_{p}M\right),
    \end{equation*}
\end{itemize}
where $i:Z\to M$ is the inclusion map.  For a fixed defining function $f$, i.e. a $b$-map $f:(M,Z)\to(\mathbb{R},0)$, we can write a $b$-de Rham $k$-form $\omega\in ^{b}\Omega^{k}(M)$ as
\begin{equation}\label{EqnbFormDecomp}
    \omega = \alpha\wedge\frac{df}{f}+\beta
\end{equation}
for some $\alpha\in \Omega^{k-1}(M)$ and $\beta\in\Omega^{k}(M)$.  While $\alpha$ and $\beta$ themselves are not unique, their values at every $p\in Z$ are unique.  This decomposition of $b$-de Rham $k$-forms \eqref{EqnbFormDecomp} allows us to extend the exterior derivative to $^{b}\Omega(M)$ by defining its action on $\omega\in$ $^{b}\Omega^{k}(M)$ by
\begin{equation}\label{EqnbExteriorDerivative}
    d\omega = d\alpha\wedge\frac{df}{f} + d\beta.
\end{equation}
This operation is well-defined, and extends smoothly over $M$ as a section of $\bigwedge^{k+1}\left( ^{b}T^{*}\right)$.  Moreover, because the usual exterior derivative satisfies $d^{2}=0$, it is clear by \eqref{EqnbExteriorDerivative} that the extended exterior derivative also satisfies $d^{2}=0$, so we can form the complex of $b$-forms, the \emph{$b$-de Rham complex}
\begin{equation}
    0\rightarrow\ ^{b}\Omega^{0}(M)\xrightarrow{d}\ ^{b}\Omega^{1}(M) \xrightarrow{d}\ ^{b}\Omega^{2}(M)\xrightarrow{d} \cdots \rightarrow 0.
\end{equation}

We can define the notion of symplectic in the $b$-category, and introduce a particular example that will be useful to us.

\begin{dfn}
    Let $(M,Z)$ be a $2n$-dimensional $b$-manifold.  A \textbf{$b$-symplectic form} on $M$ is a $b$-form $\omega \in$ $^{b}\Omega^{2}(M)$ that is closed and non-degenerate, i.e. $d\omega = 0$ and for all $p\in M$, $\omega|_{p}$ is of maximal rank as an element of $\Lambda(^{b}T^{*}_{p}M)$.
\end{dfn}

\begin{remark}
\emph{Given a symplectic manifold $M$, the cotangent bundle $TM$ carries a natural symplectic structure.  In the $b$-setting we can also define a natural $b$-symplectic structure on the $b$-cotangent bundle of a $b$-manifold.\\}

\emph{Let $(M,Z)$ be a $b$-manifold.  Let $\{ x_{1},\dots,x_{n} \}$ be local coordinates on $M$ such that $Z$ is defined by $\{ x_{1} = 0 \}$, $\{ y_{1},\dots,y_{n}\}$ fiber coordinates on $^{b}T^{*}M$.  The canonical one-form is given by}
\begin{equation}
    \theta = y_{1}\frac{dx_{1}}{x_{1}} + \sum_{i = 2}^{n} y_{i}dx_{i},
\end{equation}
\emph{and the corresponding $b$-symplectic form on $^{b}T^{*}_{p}M$ is given by}
\begin{equation*}
    \omega = d\theta = dy_{1}\wedge \frac{dx_{1}}{x_{1}} + \sum_{i = 2}^{n} dy
    _{i}\wedge dx_{i}.
\end{equation*}
\end{remark}


\subsection{Variations of spectral curves}\label{SectionVariations}

Let  $(\mathcal{E}, \phi)$ be an $\mathcal{L}$-twisted Higgs bundle on a Riemann surface $X$ such that $\deg(\mathcal{L}) > \deg(K)$, with spectral curve $S$.  From our work studying the deformation theory of $\mathcal{M}_{X}^{\mathcal{L}}$ in Section \ref{SectionDeformationTheory}, associated to $(\mathcal{E}, \phi)$, we can choose a section $s\in H^{0}(X, K\otimes\mathcal{L})\backslash \{0\}$ which maps $\mathcal{B}_{eff}$ to $\widetilde{\mathcal{B}}\subset \mathcal{B}$, and from it produce the twisted canonical one-form $\Theta$ on $K_{S}$.\\

The chosen section $s\in H^{0}(X,K^{*}\otimes \mathcal{L})\backslash \{0\}$ defines a divisor in $Z\subset X$, namely the divisor of zeroes of $s$.  We want to restrict for now to the case where $s$ has distinct zeroes.  This gives us a relationship $\mathcal{L}\cong K(Z)$.  We can view $X$ together with the divisor $Z$ as a $b$-manifold $(X,Z)$.  Because $S$ lives naturally inside of $K(Z)$, it inherits a natural divisor given by $S\cap \pi^{*}(Z)$ and thus also carries the structure of a $b$-manifold which we will denote as $(S,\pi^*(Z))$ (see Figure \ref{FigTwistedSpectral}).  The projection map $\pi$ restricted to $S$ can be viewed as a map of $b$-manifolds with derivative $d\pi: TS(\pi^{*}(Z)) \to \pi^{*}(TX(Z))\ \in Hom(\pi^{*}(K(Z)),K_{S}(\pi^*(Z)))$, where $K_{S}$ denotes the canonical bundle of $S$.  In this formulation, we have that the twisted canonical one-form is given by $\Theta\coloneqq d\pi(\theta)$, where $\theta$ is the canonical one-form for the log-symplectic structure on $K(Z)$, and $\Theta$ is, in particular, the canonical one-form for the log-symplectic structure on $K_{S}(\pi^{*}(Z))$.\\

\begin{figure}[htp] 
    \centering
    \includegraphics[width=10cm]{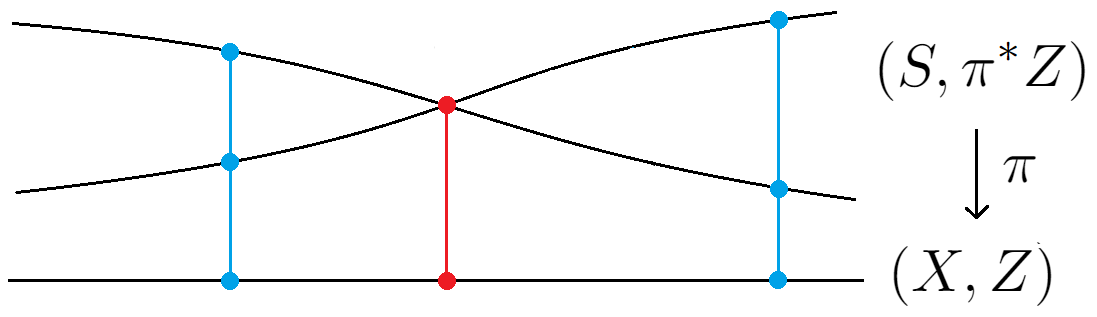}
    \caption{Spectral curve $S$ inside of $K(Z)$.}
    \label{FigTwistedSpectral}
\end{figure}







Let $Z^{reg}_{eff} = \{ (x,b)\in T^{*}X(Z)\times \tilde{\mathcal{B}}^{reg} : p_{b}(x) = 0\}$ be the universal moduli space of spectral curves, where $p_{b}(x)$ is the degree $r$ polynomial with coefficients $b_{1},\dots, b_{r}$.  The space $Z^{reg}_{eff}$ is equipped with two maps: $q:Z^{reg}_{eff} \to \tilde{\mathcal{B}}^{reg}$ the projection onto the second component and $j: Z^{reg}_{eff} \to T^{*}X(Z)$ which maps a spectral curve $\{ p_{b}(x) = 0 \}$ to its image in $T^{*}X(Z)$.
    \begin{center}
\begin{tikzpicture}
  \matrix (m) [matrix of math nodes,row sep=1.5em,column sep=1em,minimum width=2em]
  {
    & Z^{reg}_{eff} & \\    
     S\subset T^{*}X(Z) &  & \tilde{\mathcal{B}}^{reg}\supset U\ni b \\
     X &  & \\};
  \path[-stealth]
    (m-1-2) edge node [above] {$j$} (m-2-1)
            edge node [above] {$q$} (m-2-3)
    (m-2-1) edge node [left] {$\pi$} (m-3-1)

    ;
\end{tikzpicture}
\end{center}

For all $b\in \tilde{\mathcal{B}}^{reg}$ we can choose an open neighbourhood $U$ of $b$ such that in local coordinates $Z_{eff}^{reg}|_{U} \cong U \times S$, where $S$ is the spectral curve viewed as a topological surface with a family of $b$-complex structures $I(t)$ that vary in $U\subset \tilde{\mathcal{B}}^{reg}$.  The composition map $\tilde{\pi} = \pi\circ j: Z^{reg}_{eff}\to X$ corresponds to a family of maps $\pi_{t}:S\to X$ that is holomorphic with respect to the complex structure $I(t)$.  Let $\partial \in T_{b}U$ be a tangent vector.  If we differentiate the condition that $\pi_{t}$ is holomorphic with respect to $I(t)$, we get
\begin{equation}\label{EqnKappa}
    \pi_{*}(\kappa(\partial)) = \overline{\partial}Y,
\end{equation}
\medskip
where $\kappa(\partial) = -\frac{i}{2}\partial I$ is the Kodaira-Spencer class of the deformation $I$, and $Y =\tilde{\pi}_{*}(\partial) \in H^{0}(S,\pi^{*}TX(Z))$, where we have lifted $\partial$ to a vector field on $S$ by assigning to each point on $S$ the vector $\partial$.\\

\medskip


Let $D = \sum_{a}(v(\pi,a)-1)[a]$ be the divisor of ramification points on $S$.  Define $W = \frac{Y}{\tilde{\pi}_{*}} \in H^{0}(S, TS(Z) )(D)$ a $b$-vector field on $S$ with poles along $D$, where we are viewing $\frac{1}{\pi_{*}}$ as a section of $TS(Z)\otimes \pi^{*}(T^{*}X)(Z)$ that is dual to $\pi^{*}$.  From \eqref{EqnKappa}, we have 
\begin{equation}
    \kappa(\partial) = \bar{\partial}W.
\end{equation}
The $b$-vector field $W$ depends on both the choice of $\partial$ and the choice of local differentiable trivialization $U$.  When we wish to show the dependence on $\partial$ we will write $W(\partial)$.\\

Fix a $(1,0)$-vector field $\partial$ on $\mathcal{B}^{reg}_{eff}$. Define the vector field $\delta \in H^{0}(Z^{reg}_{eff},TZ^{reg}_{eff})(D)$ as the unique lift of $\partial$ to $Z^{reg}_{eff}\backslash D$ (i.e. $q_{*}(\delta) = \partial$) such that $\tilde{\pi}_{*}(\delta) = 0$ (i.e. $\tilde{\pi}$ is constant along integral curves of $\delta$).  In a local differentiable trivialization $Z_{eff}^{reg}|_{U} = U \times S$, we can write
\begin{equation}
    \delta = \partial - W(\partial).
\end{equation}


We are interested in differentiating objects on $Z_{eff}^{reg}$ by $\delta$.  Let $V = \chi(\partial)$ be the normal vector field corresponding to $\partial$ 
.


\begin{prop}\label{PropDeltaIndependence}
The variation of $\theta$ with respect to $\delta$ is independent of the trivialization of $Z_{eff}^{reg}$, and is given by
\begin{equation}
    \delta\theta = i_{\hat{V}}d\theta|_{S}.
\end{equation}
\end{prop}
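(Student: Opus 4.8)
The plan is to mimic the argument of Baraglia--Huang for the $\mathcal{L}=K$ case, adapting each step to the $b$-geometric setting. The key observation is that $\theta$ (the canonical $1$-form for the log-symplectic structure on $T^*X(Z)$, pulling back to $\Theta$ on $S$) is a tautological object: at a point $(x,\xi)\in T^*X(Z)$ it is $\xi$ composed with $d\pi$. So on the universal family $Z^{reg}_{eff}$, the $1$-form $\theta$ is the restriction of the tautological $b$-$1$-form on $T^*X(Z)$ via the map $j$. First I would write, in a local trivialization $Z^{reg}_{eff}|_U \cong U\times S$, the decomposition $\delta = \partial - W(\partial)$ from the preceding discussion, and compute $\delta\theta = \partial\theta - \mathcal{L}_{W(\partial)}\theta$, where $\partial\theta$ denotes the naive variation in the trivialization and $\mathcal{L}_{W}$ is the Lie derivative along the $b$-vector field $W$ on $S$. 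Using Cartan's formula $\mathcal{L}_{W}\theta = i_W d\theta + d(i_W\theta)$, the exact term $d(i_W\theta)$ contributes nothing to the period integrals we ultimately care about, but more importantly I would argue it can be absorbed so that the intrinsic statement holds.

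The second step is to identify $\partial\theta$, the variation of the tautological form in the fixed trivialization, with the contraction $i_{\hat V}d\theta$ restricted to $S$, where $\hat V$ is (a lift of) the normal vector field $V=\chi(\partial)$ encoding the first-order motion of the spectral curve inside $T^*X(Z)$. This is the $b$-analogue of the classical fact that the tautological one-form on a cotangent bundle varies, under a deformation of a Lagrangian-type submanifold, by the contraction of the deformation vector field into the symplectic form. Concretely: a point of $Z^{reg}_{eff}$ over $b$ is a point of the curve $S_b \subset T^*X(Z)$; moving in the $\delta$-direction keeps the image in $X$ fixed (since $\tilde\pi_*(\delta)=0$) but moves the fiber coordinate according to how the polynomial $p_b$ deforms, i.e. according to $\hat V$. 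Since $d\theta$ is the $b$-symplectic form $\omega = dy_1\wedge \frac{dx_1}{x_1} + \sum_{i\geq 2} dy_i\wedge dx_i$ and $\hat V$ has no component along the base directions $\partial/\partial x_i$ modulo terms tangent to $S$, the contraction $i_{\hat V}\omega$ produces exactly the change in $\theta = y_1\frac{dx_1}{x_1}+\sum y_i dx_i$ coming from varying the $y$-coordinates.

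The third and final step is to verify trivialization-independence: the left side $\delta\theta$ is defined using the intrinsic vector field $\delta\in H^0(Z^{reg}_{eff}, TZ^{reg}_{eff})(D)$, which does not reference $U$, so $\delta\theta$ is automatically intrinsic; what must be checked is that the right-hand-side expression $i_{\hat V}d\theta|_S$ computed in the formula agrees with this intrinsic object, i.e. that the choice of lift $\hat V$ and the splitting into $\partial$ versus $W(\partial)$ wash out. Here I would use that two trivializations differ by a $b$-vector field tangent to the fibers $S$, and that $\theta|_S = \Theta$ restricted to (the tangent directions of) $S$ is what enters, together with the Kodaira--Spencer relation $\kappa(\partial) = \bar\partial W$ which ties $W$ to the complex-structure deformation rather than to an arbitrary choice.

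The main obstacle I expect is handling the $b$-geometry carefully at the divisor $\pi^*(Z)$: the forms $\theta$ and $d\theta$ have log poles there, the vector fields $W$ and $\hat V$ are genuine ($b$-)vector fields that must remain tangent to $\pi^*(Z)$, and the contraction $i_{\hat V}d\theta$ must be shown to land in $^bT^*S$ rather than acquiring worse singularities. One must check that $\hat V$, coming from deformations along $\tilde{\mathcal{B}}^{reg}$ (equivalently, first-order deformations of $S$, which by the deformation-theory section correspond to $\mathcal{B}_{eff}$), respects the $b$-structure — this is exactly where the choice of $s$ with distinct zeroes and the identification $\mathcal{L}\cong K(Z)$ is used. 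Away from $\pi^*(Z)$ and away from the ramification divisor $D$, everything reduces to the classical Baraglia--Huang computation; the content is that the formula extends across both divisors as an identity of $b$-$1$-forms (with controlled poles along $D$).
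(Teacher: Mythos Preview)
Your overall strategy is close to the paper's, but there is a genuine gap in how you organize the Cartan-formula step, and it causes the argument to fail as written.

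You apply Cartan's formula to $\mathcal{L}_W\theta$ and then separately try to identify $\partial\theta$ with $i_{\hat V}d\theta|_S$. Neither half works cleanly on its own. First, $\partial\theta$ is \emph{not} equal to $i_{\hat V}d\theta|_S$: in the trivialization, $\partial\theta$ is $\mathcal{L}_{\widetilde V}\theta|_S$ where $\widetilde V = j_*(\partial)$ is the lift of $\partial$ (not of $\delta$), and this Lie derivative carries its own exact term $d(i_{\widetilde V}\theta)$. Second, the exact term $d(i_W\theta)$ cannot simply be ``absorbed'' or dismissed as irrelevant to periods: the proposition is an identity of $1$-forms on $S$, not of cohomology classes, so you must show the exact term actually vanishes. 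But $W$ is only smooth, so $i_W\theta$ is merely a smooth function and there is no reason $d(i_W\theta)=0$.

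The paper's fix is to combine \emph{before} applying Cartan. One pushes $\delta=\partial-W$ forward by $j_*$ to obtain the single meromorphic $b$-vector field $\widehat V = j_*(\delta) = \widetilde V - W$ on $S$ (here one checks $\bar\partial\widehat V=0$ using $\kappa(\partial)=\bar\partial W$), so that $\delta\theta = \mathcal{L}_{\widehat V}\theta|_S$. Cartan now gives a single exact term $d(i_{\widehat V}\theta)$, and the point is that $i_{\widehat V}\theta$ is a \emph{global holomorphic function} on the compact curve $S$: the $b$-structure ensures no poles along $\pi^*(Z)$, and the zeros of $\theta$ along the ramification divisor $D$ cancel the poles of $\widehat V$ there. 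Hence $i_{\widehat V}\theta$ is constant and its differential vanishes. You allude to the $b$-compatibility along $Z$ in your ``main obstacle'' paragraph, but you miss the cancellation at $D$, and more importantly you never arrange for the relevant contraction to be holomorphic rather than just smooth. That holomorphicity of $\widehat V$ (as opposed to $W$) is the crux.
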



\begin{proof}
Choose a local differentiable trivialization $Z_{eff}^{reg}|_{U} = U \times S$. Define two lifts of $V$ to $T(T^{*}X(Z))$ by $\widetilde{V} = j_{*}(\partial) \in H^{0}(S,T(T^{*}X(Z))$ and $\widehat{V} = j_{*}(\delta) \in H^{0}(S,T(T^{*}X(Z)))(D)$.  We will start by applying $\pi_{*}$ to both $b$-vector fields.  The vector field $\delta$ satisfies $\tilde{\pi}_{*}(\delta) = 0$, and so we have that $\pi_{*}\widehat{V} = \pi_{*}(j_{*}(\delta)) = \tilde{\pi}_{*}\delta = 0$.  
The map $j_{*}: T(U\times S)(Z) \to T(S)(Z)$ restricted to $TS(Z)$ acts as the identity map, so $\widehat{V} = j_{*}(\delta) = j_{*}(\partial - W) = j_{*}(\partial) - j_{*}(W) = \widetilde{V} - W$.  
Applying $\bar{\partial}$ to $\pi_{*}(\widetilde{V}) = \pi_{*}(W)$ yields $\pi_{*}(\bar{\partial}\widetilde{V}) = \pi_{*}(\bar{\partial}W) = \pi_{*}\kappa$.  
The map $\pi_{*}: TS(Z) \to \pi^{*}TX(Z)$ is generically an isomorphism, meaning that $\kappa = \bar{\partial}W = \bar{\partial}V$.  From this, we can say that $\widehat{V}$ is a meromorphic $b-$vector field, i.e. $\bar{\partial}\widehat{V} = 0$ with extra poles at the ramification points of $\pi$.  In the local differentiable trivialization $Z_{eff}^{reg}$, flowing along the vector field $\partial$ produces a family of spectral curves.  Pushing forward $\partial$ to $T^{*}X(Z)$, this family of spectral curves is obtained by flowing along $\widetilde{V} = j_{*}(\partial)$.  Varying $\theta$, which is an object $S$, by the $\partial$ is then given by $\partial\theta = \L_{\widetilde{V}}\theta|_{S}$.  We can understand the variation of $\delta$ on $\theta$ by
\begin{equation}
    \delta\theta = \partial\theta - \L_{W}\theta
    = \L_{\widetilde{V}-W}\theta|_{S}
    = \L_{\widehat{V}}\theta|_{S}
    = \iota_{\widehat{V}}d\theta|_{S} + d(\iota_{\widehat{V}}\theta)|_{S}.
\end{equation}

We can claim that $\iota_{\widehat{V}}\theta$ is a holomorphic one-form.  There are two potential areas of concern: the $b$-geometry, and the poles of $\widehat{V}$.  The interior product of a $b$-vector field with $\theta$ eliminates terms of the form $\frac{dx}{x}$, i.e. eliminating poles that would occur along $Z$.  The canonical one-form $\theta$ vanishes at the ramification points with order at least that of $\pi_{*}$ which eliminates poles coming from $\widehat{V}$.  This means that $\iota_{\widehat{V}}\theta$ is indeed a holomorphic one-form, and so in particular it is constant on $S$, and thus $d(\iota_{\widehat{V}}\theta)|_{S} = 0$.\\




We interpret the term $\iota_{\widehat{V}}d\theta|_{S}$ as being a one-form on $S$, i.e. taking in only tangent vectors to $S$.  In this way, any $S$-tangential component of $\widehat{V}$ will vanish when a tangent vector in $S$ is inserted into the resultant one-form, and so only the $S$-normal component of $\widehat{V}$ will contribute to a nonzero term.  Because $\widehat{V}$ is a lift of $V$ to $T(T^{*}X(Z))$, this normal component is $V$, and so we have $\iota_{\widehat{V}}d\theta|_{S} = \iota_{V}d\theta|_{S}$.



\end{proof}

Let $\xi\in H^{0}(T^{*}X(Z),T(T^{*}X(Z)))$ be the the vector field generating the $\mathbb{C}^{*}$-action on $T^{*}X(Z)$. In local coordinates $(x,y)$, were $x$ is the local coordinate on $X$ and $y$ is the $y$ is the fiber coordinate (i.e. thinking of $(x,y)$ as $(x, y\frac{dx}{x})$), we have that $\xi = y\frac{\partial}{\partial y}$.

\begin{lem}
    If $V, \widehat{V}$ are as in the above proposition, then 
    \begin{equation}
        \widehat{V} = \frac{\alpha}{\theta}\xi,
    \end{equation}
    where $\alpha = \iota_{V}d\theta|_{S}$.
\end{lem}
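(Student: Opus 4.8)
The plan is to pin down $\widehat{V}$ by two local facts: it is a meromorphic $b$-vector field on $T^*X(Z)$ whose only poles are at the ramification divisor $D$ of $\pi$, and it is $\pi_*$-vertical, i.e. $\pi_*\widehat V=0$, as established in the proof of Proposition \ref{PropDeltaIndependence}. A vector field on $T^*X(Z)$ that is killed by $d\pi$ must be a multiple of the fiber-generator $\xi=y\,\partial/\partial y$, since the fibers of $\pi$ are one-dimensional; so $\widehat V = h\,\xi$ for some meromorphic function $h$ on $S$ (a priori with poles along $D$ and possibly along $\pi^*(Z)$, though the $b$-structure should control the latter). It remains to identify $h$ with $\alpha/\theta$ where $\alpha=\iota_V d\theta|_S$.

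First I would contract: $\iota_{\widehat V}d\theta = h\,\iota_\xi d\theta$. In local $b$-coordinates $(x,y)$ with $\theta = y\,\tfrac{dx}{x}$ and $d\theta = dy\wedge\tfrac{dx}{x}$, one computes $\iota_\xi d\theta = \iota_{y\partial_y}\big(dy\wedge\tfrac{dx}{x}\big) = y\,\tfrac{dx}{x} = \theta$. Hence $\iota_{\widehat V}d\theta = h\,\theta$. On the other hand, from the last paragraph of the proof of Proposition \ref{PropDeltaIndependence} we have $\iota_{\widehat V}d\theta|_S = \iota_V d\theta|_S = \alpha$. Comparing, $h\,\theta = \alpha$ as sections over $S$, so $h = \alpha/\theta$ wherever $\theta$ is nonzero, giving $\widehat V = \tfrac{\alpha}{\theta}\,\xi$ as claimed. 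The division $\alpha/\theta$ is meromorphic and its pole locus (the zeros of $\theta$, which are exactly the ramification points with the order of $\pi_*$) matches the allowed poles of $\widehat V$, so the identity extends across $D$; I would note this consistency rather than belabor it.

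The step I expect to require the most care is the one glossed just above: justifying that contracting $d\theta$ with the purely vertical field $\widehat V$ and then restricting to $S$ really does reproduce the same one-form $\alpha$ that appeared in Proposition \ref{PropDeltaIndependence}, since there $\widehat V$ was the lift $j_*(\delta)$ with a genuine $S$-tangential part, and the identification $\iota_{\widehat V}d\theta|_S=\iota_V d\theta|_S$ used that tangential components die against tangent vectors of $S$. Here I am asserting $\widehat V = h\xi$ with $\xi$ genuinely transverse to $S$ (generically), so $h\xi$ is itself the normal representative and no further truncation is needed; one must check these two descriptions of $\widehat V$ agree, i.e. that the $S$-tangential part of $j_*(\delta)$ is absorbed correctly, which ultimately rests on $\pi_*\widehat V = 0$ forcing verticality. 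The only other subtlety is bookkeeping with the $b$-geometry near $\pi^*(Z)$: one should confirm $\iota_\xi d\theta=\theta$ holds as an identity of $b$-forms (it does, by the local computation above, since $\xi$ is an honest vector field and $\tfrac{dx}{x}$ is a $b$-one-form), so that no spurious contributions along $Z$ enter.
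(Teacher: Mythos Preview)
Your proposal is correct and uses the same ingredients as the paper: the verticality $\pi_*\widehat V=0$, the one-dimensionality of $\ker\pi_*$, and the key local identity $\iota_\xi d\theta=\theta$ together with $\iota_{\widehat V}d\theta|_S=\alpha$ from the preceding proposition. The only difference is direction: the paper defines $V^*=\tfrac{\alpha}{\theta}\xi$, checks that $V^*$ is vertical and that $\iota_{V^*}d\theta|_S=\alpha$ (hence $V^*$ lifts $V$), and then invokes uniqueness of a vertical lift of $V$ (since $\ker\pi_*\cap TS$ is generically zero) to conclude $\widehat V=V^*$; you instead write $\widehat V=h\xi$ from verticality and solve for $h$. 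These are the same argument run forward versus backward.
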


\begin{proof}
Recall from the proof above that $\widehat{V}$ is a lift of $V$ that satisfies $\pi_{*}(\widehat{V}) = 0$.  Because $\ker(\pi_{*})\cap TS(Z)$ is generically zero, we have that $\widehat{V}$ is characterized by these two conditions.  
Let $V^{*} = \frac{\alpha}{\theta}\xi$.  By uniqueness, it is sufficient to show that $V^*$ satisfies two properties: $V^*$ is a lift of $V$ to $T(T^{*}X(Z))(D)$, and $\pi_{*}(V^*) = 0$.\\

We have that $\pi{*}(V^*) = 0$ for free because $\xi$ is a vector field on $T^{*}X$ in the cotangent fiber direction, i.e. $\xi\in \ker(\pi_{*})$.\\

To check that $V^{*} \in H^{0}(S, T(T^{*}X(Z))(D))$ we need to check the bundle in which each component of $V^*$ lives.  For terms in the ``numerator'', we have $\xi\in H^{0}(T^{*}X(Z), T(T^{*}X(Z)))$, $\alpha = \iota_{V}d\theta|_{S} \in H^{0}(S, T^{*}(T^{*}S(Z)))$.  In the ``demoninator'', we are viewing $\frac{1}{\theta}$ as the dual vector field to $\theta$, i.e. satisfying $\theta(\frac{1}{\theta}) = 1$.  In a local frame with coordinates $(x,y)$ on $T^{*}X(Z)$ where we have $\theta = y\frac{dx}{x}$, we must have that $\frac{1}{\theta} = \frac{1}{y}x\frac{\partial}{\partial x}$.  This local insight tells us that while $\frac{1}{\theta}$ lives in the dual bundle to $\theta$, it has poles at zeros of $\theta$, which occur along the ramification divisor $D$.  
In this way we have $\frac{1}{\theta} \in H^{0}(T^{*}X(Z), T(T^{*}X(Z)))(D)$.  Putting it all together (and restricting to $S\in T^{*}X(Z)$ where necessary), we see that $V^{*} \in H^{0}(S, T(T^{*}X(Z)))(D)$.\\

We need now to check that $V^*$ is a lift of $V$.  We can do this by showing that
\begin{equation*}
    \iota_{V^*}d\theta|_{S} = \iota_{V}d\theta|_{S}.
\end{equation*}

First observe that in a local frame of $T^{*}X(Z)$ with coordinates $(x,y)$, we have that
\begin{align*}
    \theta(\xi) &= y\frac{dx}{x}(y\frac{\partial}{\partial y}) = 0,
\end{align*}
and
\begin{align*}
    \iota_{\xi}d\theta &= (dy \wedge \frac{dx}{x})(y\frac{\partial}{\partial y})\\
    &= y\frac{dx}{x}\\
    &= \theta.
\end{align*}

Computing $\iota_{V^*}d\theta|_{S}$ we have
\begin{align*}
    \iota_{V^*}d\theta|_{S} &= \frac{\alpha}{\theta|_{S}}\iota_{\xi}d\theta|_{S}\\
    &= \frac{\alpha}{\theta|_{S}}\theta|_{S}\\
    &= \alpha = \iota_{V}d\theta|_{S},
\end{align*}

\noindent which shows that $V^{*}$ is a lift of $V$.\\

We have thus shown that $V^{*}$ satisfies the same properties as $\widehat{V}$, and so by uniqueness we have $\widehat{V} = V^{*} = \frac{\alpha}{\theta}\xi$.\\
\end{proof}


We want to use the vector field $\delta$ to differentiate the Bergman kernel, and the twisted analogues of the Eynard-Orantin differentials.  This will produce twisted analogues of certain variational formulas \cite{EynardOrantin07b, BaragliaHuang17}.  We first need to adapt these objects to the $b$-geometric framework.\\

Choose a symplectic basis $\langle A_{1},...,A_{g},B_{1},...,B_{g} \rangle$ for $H_{1}(S,\mathbb{Z})$ such that none of the cycles intersect with $Z$.  The spectral curve comes equipped with a Bergman kernel $B(z_{1}, z_{2})$ as in Definition \eqref{DfnBergmanKernel}. When we make the association $L \cong K(Z)$, and $S$ is realized as the $b$-manifold $(S, \pi^{*}Z)$, we can impose the $b$-structure onto $B$.  In doing so, $B$ becomes a bilinear $b$-differential and so in particular, the local expression of $B$ in neighbourhoods $S\times S$ containing the information of $Z$ will contain $\frac{dz}{z}$-like terms.  As an example, in a neighbourhood of $Z\times Z \subset \Delta_{S\times S}$, we have
\begin{equation*}
    B(z_{1}, z_{2}) = \frac{\frac{dz_{1}}{z_{1}}\frac{dz_{2}}{z_{2}}}{(z_{1} - z_{2})^{2}} + O(1)\frac{dz_{1}}{z_{1}}\frac{dz_{2}}{z_{2}}.
\end{equation*}
Notably, in a neighourhood $U\times V$, where $U$ contains a point in $Z$ and $V$ does not, the local expression will contain a $b$-differential of the form $\frac{dz_{1}}{z_{1}}dz_{2}$, which suggests that $B$ is not symmetric as a $b$-differential.  While this causes no inherent problem for defining $B$, it will become a problem when we try to define the Eynard-Orantin differentials in this setting.  Specifically, the symmetry of the $W_{g,n}$'s come from the symmetry of $B$.  We will remedy this by introducing a symmetrized version of $B$ into this setting.

\begin{dfn}
    Define $\widehat{B}(z_{1}, z_{2}) \coloneqq \frac{1}{2}\left(B(z_{1}, z_{2}) + B(z_{2}, z_{1})\right)$.
\end{dfn}

\noindent Notably, away from $Z$, we can choose a coordinate system $U$ such that $\widehat{B}|_{U} = B|_{U}$.\\

Let $v_{1},\dots,v_{g}$ be a basis of holomorphic differentials, normalized by
\begin{equation}
    \int_{A_{j}}v_{i} = \delta_{ij}.
\end{equation}
We can impose the $b$-structure onto the $v_{i}$, which notably does not affect their normalization over the $A_{i}$-cycles.  From the properties of the ordinary Bergman kernel on $S$, we have that the symmertrized Bergman kernel on $(S,\pi^{*}Z)$ is related to the differentials by
\begin{equation}
    \int_{z_{1}\in B_{j}}\widehat{B}(z_{1},z_{2}) = 2\pi iv_{j}(z_{2}),
\end{equation}
and the period matrix $\tau$ by
\begin{equation}\label{EqnTwistedBergmanandPeriodMatrix}
    \int_{z_{1}\in B_{j}}\int_{z_{2}\in B_{k}}\widehat{B}(z_{1},z_{2}) = \tau_{ij}.
\end{equation}

We now want to differentiate $\widehat{B}$ with respect to the vector field $\delta$, we obtain a twisted version of the Rauch Variational Formula.

\begin{thm}[Twisted-Rauch Variational Formula]\label{ThmTwistedRauch}
If $\partial \in \tilde{B}^{reg}$, $p,q\in S\backslash D$ are distinct points, then
\begin{equation}
    \delta \widehat{B}(p,q) = - \sum_{a\in D}Res_{u\to a}\frac{\delta\theta \widehat{B}(u,p) \widehat{B}(u,q)}{dx(u)dy(u)}
\end{equation}
\end{thm}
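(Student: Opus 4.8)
The plan is to mirror the classical derivation of the Rauch variational formula, which expresses the variation of the Bergman kernel as a sum of residues at ramification points, but now carried out in the $b$-geometric setting with the symmetrized kernel $\widehat{B}$ and the log-symplectic one-form $\theta$. The starting point is Proposition \ref{PropDeltaIndependence} together with the Lemma identifying the lift $\widehat{V} = \frac{\alpha}{\theta}\xi$ with $\alpha = \iota_{V}d\theta|_{S}$; these tell us that differentiating an object on the universal family $Z^{reg}_{eff}$ by $\delta$ amounts to a Lie derivative along the meromorphic $b$-vector field $\widehat{V}$, which is holomorphic away from the ramification divisor $D$ and has poles precisely along $D$. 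So I would first write $\delta\widehat{B}(p,q) = \mathcal{L}_{\widehat{V}}\widehat{B}(p,q)$, interpreting $\widehat{B}$ as a section of $K_{S}(\pi^{*}Z)^{\boxtimes 2}$ and $\widehat{V}$ as acting in a third variable $u$.

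The key technical step is a residue/contour argument on $S$. One introduces the meromorphic function (or rather section) obtained by pairing $\widehat{B}(u,p)\widehat{B}(u,q)$ against $\widehat{V}$ in the $u$-variable, which by the Lemma equals $\frac{\delta\theta\,\widehat{B}(u,p)\widehat{B}(u,q)}{dx(u)\,dy(u)}$ after using $\alpha = \delta\theta$ and $\theta = \iota_{\xi}d\theta$, so that the contraction by $\xi/\theta$ converts the product of one-forms in $u$ into a function times $\frac{1}{dx(u)dy(u)}$ (the zeros of $dx$ and $dy$ not coinciding, by the good-spectral-curve hypothesis, is what keeps this under control). The sum of residues of this object over all of $S$ is zero; the residues at $u=p$ and $u=q$ produce $\delta\widehat{B}(p,q)$ by the defining double-pole behaviour of $\widehat{B}$ (here one uses the local expansion $\widehat{B}(u,p) = \frac{du\,dp}{(u-p)^{2}} + O(1)$ together with the fact that $\mathcal{L}_{\widehat{V}}$ acting near the diagonal picks out exactly the derivative of the transition data), and the remaining residues are supported on $D$, where $\widehat{V}$ has its poles. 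Rearranging the vanishing-sum-of-residues identity isolates $\delta\widehat{B}(p,q)$ as minus the sum over $a\in D$ of the residues at $a$, which is the claimed formula. One must also check that the normalization $\int_{A_{j}}\widehat{B} = 0$ is preserved under $\delta$ — this follows because the cycles $A_j$ were chosen to avoid $Z$ and $D$, so differentiation commutes with the cycle integral and the right-hand side integrates to zero over $A_j$ — which pins down that no extra holomorphic differential is added.

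The main obstacle I anticipate is bookkeeping the $b$-geometry carefully at the two kinds of special loci simultaneously: the divisor $\pi^{*}Z$ (where $\widehat{B}$ and $\theta$ carry $\frac{dz}{z}$-type factors) and the ramification divisor $D$ (where $\widehat{V}$ and $\frac{1}{\theta}$ have genuine poles). I would want to verify that $\frac{1}{dx(u)dy(u)}$ — equivalently the contraction operators implicit in $\frac{1}{\theta}$ and in the recursion kernel — interacts correctly with the $b$-differentials so that the residues at $a\in D$ are well-defined and finite, and that no spurious contribution appears along $\pi^{*}Z$ (the interior product with a $b$-vector field kills the $\frac{dz}{z}$ terms, exactly as in the proof of Proposition \ref{PropDeltaIndependence}, so $Z$ should not contribute, but this needs to be stated). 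A secondary subtlety is the use of the symmetrized $\widehat{B}$ rather than $B$: since $\widehat{B} = B$ away from $Z$ and all the relevant points $p,q,a$ and the cycles lie away from $Z$, the symmetrization is invisible to the residue computation, so the classical argument transfers essentially verbatim; but I would include a sentence noting this reduction so the reader is not left worrying that symmetrization breaks the holomorphicity arguments.
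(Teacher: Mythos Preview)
Your plan diverges from the paper's proof at the key step, and the place where you write ``the residues at $u=p$ and $u=q$ produce $\delta\widehat{B}(p,q)$'' is where the argument does not go through as stated. The paper does not run a global sum-of-residues identity. Instead it exploits the freedom in the choice of local differentiable trivialization $Z^{reg}_{eff}|_U \cong U\times S$: since $\delta = \partial - W$ in any such trivialization and Proposition~\ref{PropDeltaIndependence} guarantees independence of that choice, one may arrange that $W$ vanishes near $p$ and $q$. Then the Lie-derivative terms disappear and $\delta\widehat{B}(p,q) = \partial\widehat{B}(p,q)$. At this point the paper invokes Fay's variational formula \cite[p.~57]{Fay92} as a black box,
\[
\partial\widehat{B}(p,q)=\frac{1}{2\pi i}\sum_{a}\int_{u\in\gamma_a} W(u)\,\widehat{B}(u,p)\,\widehat{B}(u,q),
\]
where the contributions from $u=p,q$ vanish because $W$ does there, and the contours $\gamma_a$ are shrunk to avoid $Z$. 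Finally, near each $a\in D$ one writes $W = -(\delta\theta)\zeta + W'$ with $\zeta=\tfrac{1}{dx\,dy}$ and $W'$ smooth, so only the singular part survives the residue, yielding the formula.

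Your route attempts to bypass Fay's formula, but the residue at $u=p$ of $\frac{\delta\theta(u)\,\widehat{B}(u,p)\,\widehat{B}(u,q)}{dx(u)\,dy(u)}$ is a concrete local quantity --- essentially the $u$-derivative at $p$ of $\frac{\delta\theta\,\widehat{B}(u,q)}{dx\,dy}$ --- and there is no mechanism in your outline that identifies this with the \emph{moduli} variation $\delta\widehat{B}(p,q)$. That identification is exactly the content of Fay's formula (or of its derivation through the Kodaira--Spencer class and $\bar\partial W = \kappa$), so the step you are glossing over is the one the paper outsources to \cite{Fay92}. Relatedly, your opening move $\delta\widehat{B} = \mathcal{L}_{\widehat{V}}\widehat{B}$ is not what Proposition~\ref{PropDeltaIndependence} supplies: that result computes $\delta\theta$, not the action of $\delta$ on an arbitrary bidifferential, and $\widehat{V}$ is a section of $T(T^{*}X(Z))|_{S}$ rather than of $TS$, so one cannot Lie-differentiate $\widehat{B}$ along it without first projecting to $TS$, which reintroduces $W$ and brings you back to the paper's setup. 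Your remarks on the $b$-geometric bookkeeping --- that $Z$ contributes no residues because the contours avoid it, and that $\widehat{B}=B$ locally away from $Z$ so symmetrization is invisible to the computation --- are correct and match the paper's treatment.
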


\begin{proof}
Choose a local differentiable trivialization $Z_{eff}^{reg}|_{U} = U \times S$ so that $\delta = \partial - W$.  By Theorem \ref{PropDeltaIndependence}, we know that $\delta$ is independent of the choice of trivialization, we can choose the trivialization to be beneficial to us as we see fit.  In particular we will choose the trivialization so that $W$ vanishes in a neighbourhood of $p$ and $q$.  Applying $\delta$ to $\widehat{B}$ in the trivialization we have

\begin{equation*}
    \delta \widehat{B}(p,q) = \partial \widehat{B}(p,q) - \L_{W(p)}\widehat{B}(p,q) - \L_{W(q)}\widehat{B}(p,q) = \partial \widehat{B}(p,q)
\end{equation*}

In this trivialization, and choosing $\kappa = \overline{\partial}W = 0$, the Bergman kernel satisfies the variational formula \cite[pg. 57]{Fay92}
\begin{equation}
    \partial \widehat{B}(p,q) = \frac{1}{2\pi i}\sum_{a}\int_{u\in \gamma_{a}}W(u)\widehat{B}(u,p)\widehat{B}(u,q),
\end{equation}
where the sum is taken over all poles of $W(u)\widehat{B}(u,p)\widehat{B}(u,q)$, specifically, $a$ is a ramification point of $\pi$ (poles of $W$), $a = p$ (pole of $\widehat{B}(u,p)$), or $a = q$ (pole of $\widehat{B}(u,q)$), and $\gamma_{a}$ is a contour around $a$.  Because $W$ vanishes in a neighbourhood of $p$ and $q$, there will be no residue contribution, and we can therefore consider the sum to be over ramification points of $\pi$.  Choosing the contours $\gamma_{a}$ to be sufficiently small, we can ensure that the interiors do not contain any zeros of $s$, and thus need not be concerned with residues coming from $Z$.\\  


In a coordinate chart around a ramification point of $\pi$, we can write $W = -(\delta\theta)\zeta + W'$, where $\zeta = \frac{1}{dydx}$ and $W'$ is smooth.  Using this decomposition for $W$, we have
\begin{align*}
    \delta \widehat{B}(p,q) &= \partial \widehat{B}(p,q)\\
    &= \frac{1}{2\pi i}\sum_{a}\int_{u\in\gamma_a}(-(\delta\Theta)\zeta + W')\widehat{B}(u,p)\widehat{B}(u,q)\\
    &= -\sum_{a}Res_{u\to a}(\delta\Theta)(u)\zeta(u)\widehat{B}(u,p)\widehat{B}(u,q)\\
    &= -\sum_{a}Res_{u\to a}\frac{(\delta\Theta)(u)\widehat{B}(u,p)\widehat{B}(u,q)}{dx(u)dy(u)}
\end{align*}

\end{proof}

\subsection{Twisted topological recursion}\label{SectionTwistedTR}

We next want to define a twisted version of the Eynard-Orantin differentials.  Continuing with the na\"ive approach, we will define the necessary objects as living in $K(Z)$ instead of $K$.

\begin{dfn}
Let $p\in R$.  The \textbf{$\mathcal{L}$-twisted recursion kernel} (associated to $s$) at $p$ is a meromorphic section of $K_{S}(Z)\boxtimes K_{S}(Z)^{*}$ defined by
    \begin{equation}
        K_{p}(z_0,z) = \frac{\int_{t=\alpha}^{z}\widehat{B}(t,z_0)}{(y(z)-y(\sigma_{p}(z))\frac{dx(z)}{x(z)}}
    \end{equation}
where $\alpha$ is an arbitrary base point.
\end{dfn}

\begin{dfn}
The \textbf{$\mathcal{L}$-twisted Eynard-Orantin differentials} (associated to $s$) $W_{g,n}$ are meromorphic sections of the $n$-th exterior tensor product $K_{S}(Z)^{\boxtimes n}$, i.e. multi-$b$-differentials, defined as follows:\\

The initial conditions of the recursion are given by:
\begin{align}
    &W_{0,1}(z) = y(z)\frac{dx(z)}{x(z)}\\ 
    &W_{0,2}(z_1,z_2) = \widehat{B}(z_{1},z_{2}). \label{EqnW02-L}
\end{align}

For all $g,n\in \N$ and $2g - 2 + n \geq 0$, define $W_{g,n}$ recursively by

\begin{align}
    W_{g,n+1}(z_0, \textbf{z}) = \sum_{p\in R}&\mathrm{Res}_{z = p}K_{p}(z_0,z)\Big[ W_{g-1,n+2}(z, \sigma_{p}(z),\textbf{z})\\
    \nonumber &+\sum_{\substack{g_1+g_2=g \\ I\cup J = \textbf{z}}}^{'}W_{g_1, |I|+1}(z,I)W_{g_2, |J|+1}(\sigma_{p}(z), J)\Big] 
\end{align}

where the prime signifies summation excluding the cases $(g_1,I)$ or $(g_2,J) = (0,0)$.\\
\end{dfn}

\begin{remark}\label{RmkTwistedEODifferentials}
    \emph{In the ordinary setting, the Eynard-Orantin differentials satisfy a suite of useful properties, most notably that they are symmetric differentials.  It turns out that the twisted Eynard-Orantin differentials satisfy the same suite of properties.  This is largely because the structure of the recursion revolves around local data, \emph{i.e.} the residues.  The choices made to keep zeroes of $s\in H^{0}(X,K^{*}\otimes L)$ away from ramification points and the symplectic basis of cycles means that the local computations in the recursion do not see the $b$-structure, although the differentials themselves are $b$-objects.  This observation means that there is, in principle, a family of collections of twisted Eynard-Orantin differentials, parametrized by the choice of $s$.  That said, they will all live in different spaces, as the divisor $Z$ depends on $s$.  For each collection of twisted Eynard-Orantin differentials in this family, the proofs of these properties in \cite[Appendix A]{EynardOrantin07b} hold because they depend only on the recursion structure and local computations around the residues.}
\end{remark}  

Let $(\lambda_{1},...,\lambda_{g_{S}})$ be the local singular coordinates on $B_{eff}$, $\partial_{i} \coloneqq \frac{\partial}{\partial \lambda_{i}}$, and $\delta_{i} \coloneqq \delta(\partial_{i})$.

\begin{thm}[Variational Formula for twisted-E-O invariants]
For $g + k > 1$,
\begin{equation}
    \delta_{i}W_{g,k}(p_{1},...,p_{k}) = -\frac{1}{2\pi i} \int_{p\in b_{i}} W_{g,k+1}(p,p_{1},...,p_{k}),
\end{equation}
where the cycle $b_{i}$ is chosen so that it contains no ramification points.
\end{thm}

\begin{proof}
This theorem is essentially the same as Theorem 5.1 of \cite{EynardOrantin07b}, but in the twisted setting.  The original proof only relies on the Rauch variational formula and the diagrammatic representation of $W_{g,n}$.  We proved an analogous form the Rauch variational formula in Theorem \ref{ThmTwistedRauch}.  The diagrammatic representation relies on only the properties of the differentials and the recursion formula, both of which are unchanged in the twisted setting.


\end{proof}

\medskip

To better understand how this variational formula relates to the topology of the twisted setting, let us apply the variational formula to $W_{0,2}(p_{1}. p_{2}) = \widehat{B}(p_{1}, p_{2})$,

\begin{equation}\label{EqnVariationW02}
    \delta_{i}\widehat{B}(p_{1}, p_{2}) = \delta_{i}W_{0,2}(p_{1},p_{2}) = -\frac{1}{2\pi i}\int_{p\in b_{i}} W_{0,3}(p,p_{1},p_{2}).
\end{equation}


Integrating the left hand side twice then yields

\begin{align*}
    \int_{p_{1}\in b_{j}}\int_{p_{2}\in b_{k}} \delta_{i}\widehat{B}(p_{1}, p_{2}) &= \partial_{i} \int_{p_{1}\in b_{j}}\int_{p_{2}\in b_{k}} \widehat{B}(p_{1}, p_{2})\\
    &= 2\pi i\partial_{i}\tau_{jk}\\
    &= 2\pi i c_{ijk},
\end{align*}

written more cleanly

\begin{equation*}
    c_{ijk} = \frac{1}{2\pi i}\int_{p_{1}\in b_{j}}\int_{p_{2}\in b_{k}} \delta_{i}\widehat{B}(p_{1}, p_{2}).
\end{equation*}


Utilizing \eqref{EqnVariationW02}, we have a relationship between the twisted Donagi-Markman cubic and $W_{0,3}$ given by
\begin{equation}\label{EqnTwistedDMCW03}
    c_{ijk} = -\left(\frac{1}{2\pi i}\right)^{2}\int_{p_{1}\in b_{j}}\int_{p_{2}\in b_{k}}\int_{p\in b_{i}} W_{0,3}(p,p_{1},p_{2}).
\end{equation}

We can continue the computation on the right-hand side by adapting a lemma from \cite[Appendix A]{EynardOrantin07b}.  The proof of this lemma again only depends on the properties of twisted Eynard-Orantin differentials.

\begin{lem}
\begin{equation}
    W_{0,3}(p, p_{1}. p_{2}) = \sum_{a}Res_{q\to a}\frac{B(p,q)B(p_{1},q)B(p_{2},q)}{dx(q)dy(q)}
\end{equation}
where the sum is taken over ramification points $a$ of the spectral curve.
\end{lem}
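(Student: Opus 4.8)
The plan is to reduce everything to the classical computation of $W_{0,3}$ carried out in \cite[Appendix A]{EynardOrantin07b}, using the observation recorded in Remark~\ref{RmkTwistedEODifferentials} that the $b$-structure is invisible near the ramification points. Concretely: the symplectic basis of cycles and the divisor $Z$ have been arranged so that $Z$ avoids every $a\in R$, hence on a small neighbourhood of each ramification point the symmetrized Bergman kernel $\widehat{B}$ agrees with the ordinary Bergman kernel $B$ of Definition~\ref{DfnBergmanKernel}, and the twisted recursion kernel $K_{p}$ reduces to the ordinary Eynard--Orantin recursion kernel (the factor $dx/x$ becoming $dx$ off $Z$). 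Since both sides of the asserted identity are sums of residues taken at points of $R\subset S\setminus Z$, the entire argument takes place in the ordinary, non-$b$ setting.

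First I would specialize the recursion to $(g,n)=(0,2)$, obtaining
\begin{equation*}
    W_{0,3}(p,p_{1},p_{2}) = \sum_{a\in R}\mathrm{Res}_{q\to a}K_{a}(p,q)\bigl[\,\widehat{B}(q,p_{1})\widehat{B}(\sigma_{a}(q),p_{2}) + \widehat{B}(\sigma_{a}(q),p_{1})\widehat{B}(q,p_{2})\,\bigr].
\end{equation*}
At a fixed simple ramification point $a$ I would introduce a local coordinate $z$ with $x(z)-x(a)=z^{2}$, so that $\sigma_{a}(z)=-z$, $dx(z)=2z\,dz$, and $y(z)-y(-z)=2zy'(0)+O(z^{3})$ with $y'(0)\neq 0$ because $dy(a)\neq 0$ by the good-spectral-curve hypothesis (Definition~\ref{DfnGoodSpec}); in particular the denominator of $K_{a}$ has a double zero at $z=0$. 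Substituting the near-diagonal expansion $\widehat{B}(z,w)=\tfrac{dz\,dw}{(z-w)^{2}}+O(1)\,dz\,dw$ together with the parity relations for the twisted differentials (valid by Remark~\ref{RmkTwistedEODifferentials}), the bracketed term becomes an even quadratic $b$-differential in $z$ whose leading value at $z=0$ is built from the values of $\widehat{B}(q,p_{1})$ and $\widehat{B}(q,p_{2})$ there, while the numerator of $K_{a}$ contributes only its linear-in-$z$ coefficient to the residue (so the arbitrary base point $\alpha$ drops out). Collecting the Laurent coefficients then collapses the contribution at $a$ to $\mathrm{Res}_{q\to a}\tfrac{B(p,q)B(p_{1},q)B(p_{2},q)}{dx(q)\,dy(q)}$; summing over $a\in R$ and using $\widehat{B}=B$ near $R$ gives the claim.

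The only genuinely computational part is this last rearrangement of Laurent coefficients --- matching the residue of the recursion kernel times the pair of Bergman kernels against the symmetric product of three Bergman kernels divided by $dx\,dy$ --- but it is exactly the manipulation of \cite[Appendix A]{EynardOrantin07b}, with no new input: the parity and pole-location properties of the $W_{g,n}$ invoked there hold verbatim in the twisted setting by Remark~\ref{RmkTwistedEODifferentials}, and the $b$-geometry never enters because all residues are taken at points of $S\setminus Z$. A final sentence should record that the right-hand side is intrinsic --- independent of the local coordinate and sheet labelling at each $a$ --- since a residue is coordinate-free, which in particular makes the resulting formula manifestly symmetric in $p,p_{1},p_{2}$.
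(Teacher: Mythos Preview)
Your proposal is correct and follows exactly the paper's own approach: the paper simply cites \cite[Appendix A]{EynardOrantin07b} and remarks that the proof there depends only on the properties of the (twisted) Eynard--Orantin differentials, which by Remark~\ref{RmkTwistedEODifferentials} carry over unchanged since all residues are taken away from $Z$. You have supplied more of the local detail than the paper does, but the strategy---reduce to the classical computation by noting the $b$-structure is invisible near $R$---is identical.
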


From this lemma we obtain
\begin{align*}
    c_{ijk} &= -\left(\frac{1}{2\pi i}\right)^{2}\sum_{a}\int_{p_{1}\in b_{j}}\int_{p_{2}\in b_{k}}\int_{p\in b_{i}} Res_{q\to a}\frac{B(p,q)B(p_{1},q)B(p_{2},q)}{dx(q)dy(q)}\\
    &= -2\pi i \sum_{a}Res_{q\to a}\frac{v_{i}(q)v_{j}(q)v_{k}(q)}{dx(q)dy(q)}.
\end{align*}

This proves a local analogue of the residue formula for the Donagi-Markman cubic as presented in Baralgia-Huang.

\begin{lem}[Local residue formula for the twisted Donagi-Markman cubic]
\begin{equation*}
    c_{ijk} = 2\pi i \sum_{a}Res_{q\to a}\frac{v_{i}(q)v_{j}(q)v_{k}(q)}{dx(q)dy(q)}
\end{equation*}
\end{lem}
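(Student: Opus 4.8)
The plan is to obtain this residue formula by combining the two ingredients already assembled: the identity \eqref{EqnTwistedDMCW03} expressing the twisted Donagi--Markman cubic $c_{ijk}$ as a triple $b$-cycle integral of $W_{0,3}$, and the immediately preceding lemma rewriting $W_{0,3}(p,p_{1},p_{2})$ as a sum of residues at the ramification points of the spectral curve. Substituting the latter into the former yields
\[
    c_{ijk} = -\left(\frac{1}{2\pi i}\right)^{2}\sum_{a}\int_{p_{1}\in b_{j}}\int_{p_{2}\in b_{k}}\int_{p\in b_{i}} \mathrm{Res}_{q\to a}\,\frac{\widehat{B}(p,q)\,\widehat{B}(p_{1},q)\,\widehat{B}(p_{2},q)}{dx(q)\,dy(q)},
\]
so the whole proof amounts to moving the three contour integrals inside $\mathrm{Res}_{q\to a}$ and evaluating them.

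First I would justify the interchange of $\mathrm{Res}_{q\to a}$ with $\int_{p\in b_i}$, $\int_{p_1\in b_j}$ and $\int_{p_2\in b_k}$. This is exactly where the standing genericity choices are used: the representing cycles $b_i,b_j,b_k$ are chosen disjoint from the ramification divisor $D$ and from the zero divisor $Z=\mathrm{div}(s)$, so for $q$ in a small punctured disc about a ramification point $a$ the points $p,p_1,p_2$ stay bounded away from $q$ and from $Z$, the integrand is holomorphic --- hence uniformly continuous --- in $(p,p_1,p_2,q)$ there, and by simplicity of the ramification (Definition \ref{DfnGoodSpec}) it has at worst a simple pole in $q$ at $a$ uniformly in the remaining variables; Fubini together with the integral representation of a residue then lets one pull the $\oint$'s past $\mathrm{Res}$. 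Once inside, each integral is evaluated by the $B$-cycle period property of the Bergman kernel, $\int_{p\in b_i}\widehat{B}(p,q)=\int_{p\in b_i}B(p,q)=2\pi i\,v_i(q)$ (Definition \ref{DfnBergmanKernel}; here $\widehat{B}=B$ on these cycles since they avoid $Z$), and likewise over $b_j$ and $b_k$. Since each $v_i$ is holomorphic, the only surviving $q$-singularity is the simple pole of $1/(dx(q)\,dy(q))$ at each ramification point, so the residues are well defined, and collecting the three factors $2\pi i$ against the prefactor in \eqref{EqnTwistedDMCW03} gives $c_{ijk}=2\pi i\sum_{a}\mathrm{Res}_{q\to a}\,v_i(q)v_j(q)v_k(q)/(dx(q)\,dy(q))$, with the overall sign pinned down by the chosen orientation of the $b_i$.

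The only genuinely delicate point is this interchange of the residue operation with the $b$-cycle integrations; everything else is bookkeeping with powers of $2\pi i$. Conceptually the reason no $b$-geometric correction appears is the principle of Remark \ref{RmkTwistedEODifferentials}: because the zeros of $s$, the ramification points, and the chosen symplectic basis of cycles are pairwise disjoint, no $\frac{dz}{z}$-type term ever enters a local residue computation, so the argument is the $\mathcal{L}=K$ computation of Baraglia--Huang carried out verbatim. A careful write-up should still record the uniform local estimate on $\widehat{B}(p,q)/(dx(q)\,dy(q))$ near each $a$ that makes the residue continuous in the remaining variables and thereby legitimizes taking it outside the integrals.
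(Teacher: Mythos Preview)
Your proposal is correct and follows exactly the same route as the paper: the paper's ``proof'' is precisely the two-line computation immediately preceding the lemma statement, namely substituting the residue formula for $W_{0,3}$ into \eqref{EqnTwistedDMCW03}, interchanging the $b$-cycle integrals with $\mathrm{Res}_{q\to a}$, and using $\int_{b_i}\widehat{B}(\cdot,q)=2\pi i\,v_i(q)$. You simply supply more justification for the interchange step than the paper does; note also that both the paper's computation and a straightforward bookkeeping of the constants yield $-2\pi i$ rather than the $+2\pi i$ appearing in the lemma statement, so the sign discrepancy you hedge with ``pinned down by the chosen orientation'' is already present in the paper itself.
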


We want to return our attention back to \eqref{EqnTwistedDMCW03}.  We can continue this process of applying the variational formula to $W_{0,2}$ multiple times with different choices of $\delta_{i}$ and obtain:

\begin{thm}\label{ThmBigThm}
    \begin{equation}        \partial_{i_{1}}\partial_{i_{2}}\dots\partial_{i_{m-2}}\tau_{i_{m-1}i_{m}} = -\left( \frac{i}{2\pi} \right)^{m-1} \int_{p_{i_{1}}\in b_{i_{1}}}\dots\int_{p_{m}\in b_{i_{m}}} W_{0,m}(p_{1},...,p_{m}).
    \end{equation}
\end{thm}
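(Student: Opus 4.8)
The plan is to establish the identity by induction on $m$, peeling off one effective-base derivative at a time with the Variational Formula for the twisted Eynard--Orantin invariants proved above. Each use of that formula trades a $\delta_i$-variation for one extra $b$-cycle integral together with a factor $-\tfrac{1}{2\pi i}$, and the only arithmetic behind the clean appearance of the powers of $i/2\pi$ is the identity $-\tfrac{1}{2\pi i}=\tfrac{i}{2\pi}$.

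The base case $m=2$ carries no derivatives and asserts $\tau_{i_1i_2}=-\tfrac{i}{2\pi}\int_{p_1\in b_{i_1}}\int_{p_2\in b_{i_2}}W_{0,2}(p_1,p_2)$; since $W_{0,2}=\widehat B$ and the $B$-cycle normalization recalled around \eqref{EqnTwistedBergmanandPeriodMatrix} gives $\int_{b_{i_1}}\int_{b_{i_2}}\widehat B=2\pi i\,\tau_{i_1i_2}$ --- the value used in the $c_{ijk}$ computation preceding \eqref{EqnTwistedDMCW03} --- this is just $-\tfrac{i}{2\pi}\cdot 2\pi i=1$. For the inductive step I would take the identity for $m$ arguments, namely $\partial_{i_1}\cdots\partial_{i_{m-2}}\tau_{i_{m-1}i_m}=-(\tfrac{i}{2\pi})^{m-1}\int_{b_{i_1}}\cdots\int_{b_{i_m}}W_{0,m}$, and apply one further derivative $\partial_{i_0}$. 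On the right it commutes with all $m$ cycle integrals, since the $b_{i_j}$ are fixed homology classes disjoint from the ramification divisor $D$ and from the zero divisor $Z$ of $s$ and stay so under small moves in $\mathcal B^{reg}_{eff}$; inside the integrals it acts on the multi-$b$-differential $W_{0,m}$ exactly as the trivialization-independent lifted field $\delta_{i_0}$, just as in the passage from $\int\int\widehat B$ to $\partial_i\int\int\widehat B$ before \eqref{EqnVariationW02}. The Variational Formula then rewrites $\delta_{i_0}W_{0,m}$ as $-\tfrac{1}{2\pi i}\int_{p_0\in b_{i_0}}W_{0,m+1}(p_0,p_1,\dots,p_m)$; moving the new $b_{i_0}$-integral to the front (again harmless, as $b_{i_0}$ misses the ramification points) and using $-\tfrac{1}{2\pi i}=\tfrac{i}{2\pi}$ promotes the coefficient from $-(\tfrac{i}{2\pi})^{m-1}$ to $-(\tfrac{i}{2\pi})^{m}$, which is the asserted constant for $m+1$ arguments, while the left side becomes $\partial_{i_0}\partial_{i_1}\cdots\partial_{i_{m-2}}\tau_{i_{m-1}i_m}$. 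Full symmetry of the right side in $i_1,\dots,i_m$ follows from symmetry of $W_{0,m}$ (Remark \ref{RmkTwistedEODifferentials}), so the order of peeling derivatives is immaterial and matches the Clairaut symmetry of the left side.

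The step requiring genuine justification --- and the reason for the bookkeeping of Section \ref{SectionVariations} --- is the interchange of the base derivatives $\partial_i$ with the $b$-cycle integrals and their identification with the geometric variation $\delta_i$ of the $b$-differentials: one must know that the $b$-structure along $Z$ and the ramification behaviour of the moving spectral curve do not obstruct differentiation under the integral sign. This is exactly what is neutralized by choosing the cycles away from $Z$ and $D$, by Proposition \ref{PropDeltaIndependence}, and by the twisted Rauch formula (Theorem \ref{ThmTwistedRauch}); granting these, the remainder is a formal iteration of the variational formula, directly parallel to the $\mathcal L=K$ argument of Baraglia--Huang.
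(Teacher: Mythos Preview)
Your proposal is correct and follows essentially the same approach as the paper: the paper's argument consists of the single sentence ``We can continue this process of applying the variational formula to $W_{0,2}$ multiple times with different choices of $\delta_{i}$'', and your induction on $m$ is precisely the explicit version of that iteration, with the same base case \eqref{EqnTwistedBergmanandPeriodMatrix} and the same inductive mechanism (the Variational Formula together with commuting $\partial_i$ past the $b$-cycle integrals). Your write-up is in fact more careful than the paper's about the hypotheses needed to justify each step.
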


We can interpret this theorem in the following way.  The Higgs bundle $(\mathcal{E}, \phi)$ produces a spectral curve $S$.  This spectral curve lives over a point $b\in \widetilde{\mathcal{B}}^{reg}$, the image of the deformation space of spectral curves inside of the Hitchin base induced by the chosen section $s\in H^{0}(X,K^{*}\otimes \mathcal{L})$. The $g=0$ twisted Eynard-Orantin variants compute the Taylor series expansion of the period matrix at the point $b$.  This means that with the information of a single spectral curve $S$, we can use the twisted Eynard-Orantin invariants to understand local deformations of $S$ through their period matrices.\\

Throughout our development, we have started with a twisted Higgs bundle and then considered its spectral curve, as a unique global object.  Our results here also allow us to draw conclusions in the setting where one starts with a spectral curve presented in a local form independent of any reference to a given Higgs bundle, as is typically the case in topological recursion. If we start with the data of a local spectral curve, considered as a monic polynomial of degree $r$ in $y(x)$ with coefficients as polynomials in the affine coordinate $x$, then we can interpret this form as the spectral curve of a rank-$r$ twisted Higgs bundle on $\mathbb P^1$, although of course not uniquely so: the underlying bundle $\mathcal E$ and the twist $\mathcal L$ are not uniquely determined and, even when these are fixed, there is still typically a family of such isomorphism classes, corresponding to a subvariety of the associated Hitchin fibre.  (We can ensure that the resulting Higgs bundles are $\mathfrak{sl}(r,\mathbb C)$-valued by asking for the coefficient of $y^{r-1}$ to be identically zero.) Of particular importance to our set up is that there are countably-infinitely-many line bundles $\mathcal L$ in which the Higgs field can take values --- there is a lower bound on the degree of $\mathcal L$ determined by the degrees of the spectral data as polynomials in $x$, but there is no upper bound. As such, one can ask if the twisted generalization of the topological recursion is sensitive to the various embeddings of the spectral curve into $\mbox{Tot}(\mathcal L)$ or if the results are independent of the choice of how to interpret the spectral curve as a Hitchin spectral curve.  For this, it is worth noting that while the coordinates $\lambda_{i}$ depend on $s$, this result would hold for any $s$ that does not have zeros along the symplectic basis of cycles.  This aligns with the observation in Remark \ref{RmkTwistedEODifferentials}, as the variational formula only depends on local data away from the zeroes of $s$.  While Theorem \eqref{ThmBigThm} will hold for any choice of $s$ compatible with the symplectic basis of cycles, we have to be careful about the interpretation between various choices of $s$.  The period matrix is an inherent object on $S$, depending only on the choice of symplectic basis, regardless of where or how we view $S$ as residing.  It makes sense, then, that choices of $s$ should not change the period matrix --- this is true, as we write the period matrix in terms of $W_{0,2}=\widehat{B}$ by \eqref{EqnTwistedBergmanandPeriodMatrix}, for any compatible $s$.  The information of the Taylor series, however, gives information about how the period matrix changes with respect to coordinates on $\widetilde{\mathcal{B}}^{reg}$, which does, \emph{a priori}, depend on $s$ (it has also not been addressed in the case of the twisted invariants).  This dependence on $s$ is a subtly that has not been addressed.  In principle, deformations are controlled by the effective Hitchin base $\mathcal{B}_{eff}$, which is independent of $s$, rather than $\widetilde{\mathcal{B}}$.  We would like to say that the full interpretation of Theorem \ref{ThmBigThm} is independent of $s$, however, without further understanding the dependence of $\widetilde{B}$ on $s$, we leave this statement as a conjecture for now.
\\


\section{Further perspectives on $\mathcal{L}$-twisted Hitchin geometry}

The comments at the end of Section \ref{SectionTwistedTR} suggest that we need to better understand the relationship between the various choices of $s\in H^{0}(X,K^{*}\otimes\mathcal{L})$ and the geometry of the spectral curve.  To study this dependence, we consider the vector bundle over $H^{0}(X,K^{*}\otimes\mathcal{L})$ with fibres $\mathcal{B}$ (see Figure \ref{FigSpectralBundle}).  Because $H^{0}(X,K^{*}\otimes\mathcal{L})$ is a vector space, this is just the trivial bundle $\mathcal{B}\times H^{0}(X,K^{*}\otimes\mathcal{L})$.

\begin{figure}[htp] 
    \centering
    \includegraphics[width=10cm]{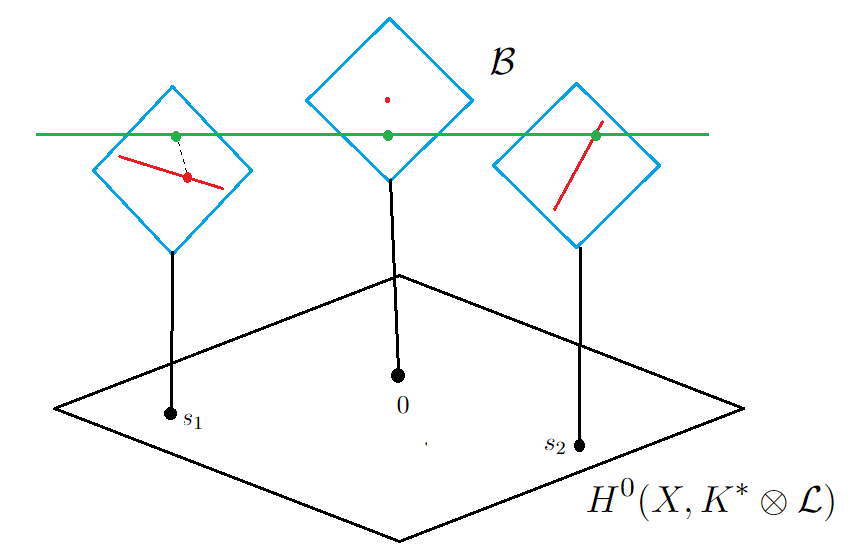}
    \caption{Vector bundle over $H^{0}(X,K^{*}\otimes L)$ whose fibres are $\mathcal{B}$ (in blue).  Each fibre has a distinguished subspace $\widetilde{\mathcal{B}}_{s}$ (in red).  For a chosen Higgs bundle, there is a constant section given by the characteristic coefficients (in green).}
    \label{FigSpectralBundle}
\end{figure}

Let $(\mathcal{E}, \phi)$ be an $\mathcal{L}$-twisted Higgs bundle with spectral curve $S$.  Choose a symplectic basis $\langle A_{1},...,A_{g},B_{1},...,B_{g} \rangle$ for $H_{1}(S,\mathbb{Z})$.  There is a natural constant section on this vector bundle associated to $(\mathcal{E}, \phi)$, which assigns to each point in the base the characteristic coefficients of the Higgs field in the fibre.  In each fibre, there is also an identified subspace given by $\widetilde{\mathcal{B}}_{s}$, the image of $\mathcal{B}_{eff}$ under multiplication by $s$ inside of $\mathcal{B}$.  Recall from Section \ref{SectionDeformationTheory} that $\mathcal{B}_{eff}$, and by extension $\widetilde{\mathcal{B}}_{s}$, is the space of deformations of the spectral curve.  Away from the point $s\equiv 0$ where $\dim\widetilde{\mathcal{B}}_{s} = 0$, the dimension of each $\widetilde{\mathcal{B}}_{s}$ is equal, although they do not define the same subspaces.  Notably, the constant section defined by $(\mathcal{E},\phi)$ need not intersect the identified subspace in the fibre.  There is a subset of points in the base whose zeroes intersect the symplectic cycles.  We will call this set the \emph{incompatible locus}.  Away from the incompatible locus (\emph{i.e.}, along points in the base whose zeroes do not intersect the symplectic cycles) we can, in a holomorphic way, identify $S$ with a point on $\widetilde{\mathcal{B}}_{s}$, which describes $S$ with ``zero deformations''.  Like this, we have a natural section on the base minus the incompatible locus coming from $S$.\\

To determine the dependence on $s$, we need to understand three subsets of $H^{0}(X,K^{*}\otimes\mathcal{L})$: the incompatible locus, the set of sections with non-distinct zeroes, and the set of sections whose zeroes overlap with ramification points of $S$.  Removing these subsets from $H^{0}(X,K^{*}\otimes\mathcal{L})$ will yield a space of sections that are suitable for the constructions in the previous section.  The topology of this space will determine which sections are ``equivalent'', in the sense that we can move from one to another by moving zeroes of $s$ without crossing any of the noted subsets above. We can then look at a slightly weakened version of the conjecture made at the end of the previous chapter: the interpretation of Theorem \ref{ThmBigThm} holds on equivalence classes of sections.\\

There are also two directions that we can proceed to further investigate this vector bundle:
\begin{enumerate}
    \item How do sections that pass through the identified subspaces in the fibres correspond to spectral curves?  For example, can a section defined over only part of $H^{0}(X,K^{*}\otimes\mathcal{L})$ define a incompatible locus, and thus produce a set of cycles on $S$?  Can it make that set of cycles a symplectic basis?  
    \item We can pull this bundle back onto itself.  The tautological section on this pullback bundle contains the information of all spectral curves.  Can we use this to construct a \emph{universal spectral curve}, which contains all the important data of spectral curves?  Suppose we have an $\mathcal{L}$-twisted Higgs bundle $(\mathcal{E}, \phi)$.  Does this universal spectral curve see the properties of the spectral curve that are invariant under choices of $s$? (Or compatible choices of $s$?)
\end{enumerate}

\vspace{5pt}
\begin{remark}
    \emph{It is important to bring attention to an additional choice that was made at the beginning of Section \ref{SectionVariations}, specifically, only choosing $s$ to have distinct zeroes.  This choice was made to ensure that we could work in the context of $b$-geometry, and that our Higgs fields only have simple poles.  If we dropped this condition, we would be entering the setting of $b^{k}$-geometry \cite{Scott16} and wild Higgs bundles \cite{BiquardBoalch01, Boalch18, FredricksonNeitzke21}.  Assuming that we could carry out similar calculations in this setting, it would open up a larger class of sections in $H^{0}(X,K^{*}\otimes\mathcal{L})$, which correspond to situations where we allow zeroes to cross as we vary $s$.  We will not investigate these ideas here, but wish to bring attention to them as a further direction of generalization.}
\end{remark}


\bibliography{refs}{}
\bibliographystyle{acm}

\end{document}